\newcommand{\figdir}{./}
\newcommand{\figdirNew}{./}
\newcommand{\pdpd}[2]{\frac{\partial #1}{\partial #2}}
\newcommand{\supp}{\operatorname{supp}}
\newcommand{\RR}{{\mathbb{R}}}
\newcommand{\ZZ}{{\mathbb{Z}}}
\newcommand{\bs}{\backslash}
\newcommand{\FourierF}{\mathcal{F}}
\newcommand{\sgn}{\operatorname{sgn}}
\newcommand{\PV}{\operatorname{p.v.}}
\newtheorem{definition}{Definition}
\newtheorem{theorem}{Theorem}
\newtheorem{proposition}[theorem]{Proposition}
\title[A dispersion minimizing scheme for the 3-D Helmholtz equation]
{A dispersion minimizing scheme for the 3-D Helmholtz equation based
  on ray theory}
\author{Christiaan C. Stolk}
\address{Korteweg-de Vries Institute for Mathematics, Science
  Park 105-107, 1090 XG Amsterdam The Netherlands}
\email{C.C.Stolk@uva.nl}
\begin{document}
\begin{abstract}
We develop a new dispersion minimizing compact finite difference
scheme for the Helmholtz equation in 2 and 3 dimensions. The scheme is
based on a newly developed ray theory for difference equations. A
discrete Helmholtz operator and a discrete operator to be applied to
the source and the wavefields are constructed. Their coefficients are
piecewise polynomial functions of $hk$, chosen such that phase and
amplitude errors are minimal. The phase errors of the scheme are very
small, approximately as small as those of the 2-D quasi-stabilized FEM
method and substantially smaller than those of alternatives in 3-D,
assuming the same number of gridpoints per wavelength is used. In
numerical experiments, accurate solutions are obtained in constant and
smoothly varying media using meshes with only five to six points per
wavelength and wave propagation over hundreds of wavelengths. When
used as a coarse level discretization in a multigrid method the scheme
can even be used with downto three points per wavelength.  Tests on
3-D examples with up to $10^8$ degrees of freedom show that with a 
recently developed hybrid solver, the use of coarser meshes can lead 
to corresponding savings in computation time, resulting in good 
simulation times compared to the literature.
\end{abstract}

\maketitle

\section{Introduction}

We consider the discretization on regular meshes of the Helmholtz
equation
\begin{equation} \label{eq:Helmholtz_equation}
  -\Delta u - k(x)^2 u = f 
\end{equation}
with large and variable $k$. These methods are widely used for
simulations on unbounded domains, for example in exploration
geophysics, using domain sizes, in three dimensions, of up to hundreds
of wavelengths \cite{RiyantiEtAl2007,CalandraEtAl2013,PoulsonEtAl2013,
Stolk2014Preprint}.

A key issue for such discretizations are the dispersion (phase)
errors, that are closely related to pollution errors
\cite{BabuskaEtAl1995}. Typically, the propagating wave solutions to
the discrete and continuous equations have slightly different
wavelengths. These wavelength errors are also referred to as phase
velocity or phase slowness errors, in which case they are differently
normalized. They lead to phase errors in the solution that grow with
the distance from the source. A second important consideration is
solver cost.  The discretized Helmholtz operator should of course be
cheap to apply and/or invert.

A class of discretizations, that performs relatively
well on these criteria, is given by so called compact finite
difference methods, that use a $3 \times 3$ square or $3 \times 3
\times 3$ cubic stencil in two resp.\ three dimensions. The
corresponding discrete Helmholtz operators can be efficiently applied
and inverted compared for example to standard finite difference or
finite element methods. Many authors have studied such discretizations
and obtained formulae for the coefficients as a function of $k$ and
the grid spacing $h$
\cite{BabuskaEtAl1995,HarariTurkel1995,JoShinSuh1996,OpertoEtAl2007,
  Sutmann2007,ChenChengWu2012,TurkelEtAl2013,StolkEtAl2014}. We will discuss these
schemes more in detail below, and compare their phase slowness errors
with those of standard finite differences and Lagrange finite element
methods on regular meshes.

To design such methods, several strategies have been followed.  One
approach is too construct schemes of higher order, for example order four
order six, see \cite{HarariTurkel1995,Sutmann2007,TurkelEtAl2013} and 
the references in \cite{Sutmann2007}. Another approach is to stay
with second order schemes but minimize the dispersion errors, because
these are the dominant errors for long distance wave propagation
\cite{BabuskaEtAl1995,JoShinSuh1996,
  OpertoEtAl2007,ChenChengWu2012,StolkEtAl2014}. From the point of
view of phase slowness errors, the sixth order schemes of
\cite{Sutmann2007} and
\cite{TurkelEtAl2013} and the quasi-stabilized FEM (QS-FEM) scheme of
\cite{BabuskaEtAl1995} are the best, see the results below. The latter
has the smallest phase slowness errors by a substantial margin, but is
only available in 2-D.

Alternative methods include higher order finite elements. An advantage
of these methods is the better theory for the behavior of the errors
in the limit that the grid spacing goes to zero, see for
example \cite{Ihlenburg1998,MelenkSauter2011}.

In this paper we introduce a new second order dispersion minimizing
scheme in 2 and 3 dimensions with phase slowness errors comparable to
those of QS-FEM. Accurate amplitudes are obtained as well using new
amplitude correction operators. A theoretical justification is given
using a newly developed ray theory for Helmholtz-like difference
equations.  This theory is remarkably similar to the continuous
theory, when both are formulated in terms of the symbols associated
with the operators.  With numerical examples we show the potential for
accurate and fast simulation on relatively coarse meshes. In addition
we show applications where the method is used as a coarse level
discretization in multigrid solvers

We will briefly describe the methodology and the results.  It is known
that the second order, compact finite difference discretizations of
the Helmholtz operator form a 3 or 5 parameter family, in 2 and 3
dimensions respectively, and that by choosing parameters in a certain
way, the phase slowness errors can be reduced compared to standard
schemes \cite{JoShinSuh1996, OpertoEtAl2007}. When coefficients are
allowed to depend on $hk$ in a piecewise constant
\cite{ChenChengWu2012} or piecewise linear fashion
\cite{StolkEtAl2014}, they can be further reduced. In this paper we
let the parameters depend in a $C^1$ fashion on $hk$ through third
order Hermite interpolation and obtain a further reduction of the phase 
slowness errors.

Dispersion minimizing schemes are typically intended for use on quite
coarse meshes, and a theoretical understanding that does not involve
the limit $h k \rightarrow 0$ is therefore of considerable
interest. For this reason we consider ray theory for Helmholtz-like
difference equations.

Ray theory for continuous Helmholtz equations is well known
\cite{Duistermaat1996}.
Solutions are sought in the form $A(x) e^{i \omega \Phi(x)}$. If $k$
is smooth, $\Phi$ satisfies a certain eikonal equation and $A$ a
certain transport equation, than such solutions approximate the true
solutions increasingly well in the limit $\omega \rightarrow \infty$.
Here we develop a similar theory for Helmholtz-like difference
equations. We can then choose the discrete scheme such that the phase
and amplitude functions associated with the discrete operator
approximate match those of the continuous operator well.  As can be
expected, schemes with small phase slowness errors have accurate phase
functions. By introducing amplitude correction operators, accurate
amplitude of the ray-theoretic solutions are obtained.

We are interested in two ways of applying the discretized Helmholtz
operators. The first is simply as a discretization of
(\ref{eq:Helmholtz_equation}), where the criterion is that the
discrete solutions should approximate the true solutions well. Here we
are particularly interested in the use of coarse meshes, say downto
five or six points per wavelength, which are for example applied in
exploration geophysics
\cite{JoShinSuh1996,OpertoEtAl2007,KnibbeEtAl2014}.  The second
application is internally in multigrid based solvers.  In a multigrid
method, the original mesh is coarsened by a factor two one or more
times. On each of the new meshes a discretization of the operator is
required. In this application the main criterion for a good
discretization is that the multigrid method converges rapidly. The
results concerning the application in multigrid methods are also of
interest for recently developed two-grid or multigrid methods with
inexact coarse level inverses \cite{CalandraEtAl2013,
  Stolk2014Preprint}, which are currently some of the fastest solvers
in the literature. (The method of \cite{Stolk2014Preprint} will
actually be tested here.) Below we will write sometimes the fine level
mesh for the original, uncoarsened mesh.

The small phase slowness errors for IOFD suggest that accurate solutions
are possible even when quite coarse meshes are used, say downto five
or six points per wavelength. We will show that this is indeed the
case using numerical examples with constant, and smoothly varying
velocity models (recall that $k(x) = \frac{\omega}{c(x)}$ with $c$ the
medium velocity). 

We then consider the application of the IOFD discretization as coarse
level discretization in multigrid based solvers. We will show that in
this case IOFD can be used with very coarse meshes with downto three
points per wavelength. With such meshes, solutions are generally not
accurate enough for direct use, but the approximate solutions can
still be used fruitfully in a multigrid method, where they are refined
and iteratively improved. This is established using a set of
two-dimensional examples, in which a two-grid method with IOFD at both
levels converges rapidly (see also the results discussed in the next
paragraph). As explained in \cite{StolkEtAl2014}, for the good
convergence it is necessary to have very small phase slowness errors at
these very coarse meshes. The IOFD method (in two and three
dimensions) and the QS-FEM method (in two dimensions) are the only
discretizations that have this property to our knowledge, and appear
to be uniquely suitable for this application.

In 3-D, the fact that a coarser mesh is used does not necessarily
imply lower simulation cost. That depends also on the behavior of the
solver. To investigate this aspect we present tests with a recently
developed solver described in \cite{Stolk2014Preprint}. The solver
uses a two-grid method with an inexact coarse level inverse, given by
a double sweep domain decomposition preconditioner. As described in
the previous paragraph, IOFD will also be used as coarse level
discretization. Using the SEG-EAGE Salt Model with up to $10^8$
degrees of freedom as example, we find that for downto six points per
wavelength the cost per degree of freedom changes little when the
frequency is increased. Computation time compare favorably to some of
the results in the literature.

The outline of this work is as follows. In 
section~\ref{sec:theory_constant} the theory for finite difference
discretizations of the Helmholtz equation with constant $k$ is
developed. The symbols and phase slownesses are defined and the
discrete Green's function is studied.
In section~\ref{sec:theory_variable} we consider the case of variable
$k$ and describe ray theory for discrete Helmholtz equations.
In section~\ref{sec:phase_slowness_errors_existing} we compute the 
phase slowness errors of various existing
schemes, as a reference for the new method.  In section~\ref{sec:IOFD}
we introduce our new interpolated optimized finite difference method.
Section~\ref{sec:simulations} contains some numerical simulations
illustrating the accuracy of the solutions when using the IOFD
discretization.  Section~\ref{sec:twogrid} discusses the use of IOFD
in multigrid based solvers. Finally, section~\ref{sec:discussion}
contains a brief discussion of some further aspects.

\section{Theory of discrete Helmholtz equations with constant $k$%
\label{sec:theory_constant}}

In this section we study finite difference discretizations of 
Helmholtz equation
\begin{equation}
  H u = f , \qquad H = - \Delta - k^2 
\end{equation}
in case $k$ is constant. We will assume the grid is given by
$(h\ZZ)^d$. In this and the next section it is convenient to write
$\alpha,\beta, ...$ for multi-indices associated with grid points,
such that with $\alpha=(\alpha_1,\ldots,\alpha_d)$ is associated the
grid point $h \alpha$. A difference operator will be viewed as an
operator on functions of $x \in (h\ZZ)^d$. In this and the next
section the dimension $d$ can be any positive integer.

For constant $k$, a finite difference discretization of the Helmholtz
operator $H$ is a translation invariant difference operator with
coefficients depending on the grid spacing $h$ and on $k$.  By
dimensional analysis we may assume that the matrix elements
$p_{\alpha,\beta}$ of such a difference operator $P$ are defined in terms
of a finite set of functions $f_\gamma$ by
\begin{equation} \label{eq:define_P_constant}
  p_{\alpha,\beta} = \frac{1}{h^2} f_{\alpha - \beta}(h k) , 
\end{equation} 
where $f_\gamma$ is only nonzero for $\gamma$ in some finite set
$\operatorname{Sten}(P)$.

We will first consider the action of such an operator in the Fourier
domain and define the associated symbol and phase slownesses. We next
define a ``dimensionally reduced'' symbol. Then we consider the
discrete Green's function, i.e.\ solutions to the equation
\begin{equation} \label{eq:Greens_functions_general_spatial}
  P u = \delta ,
\end{equation}
where the $\delta$ function on $(h\ZZ)^d$ is defined by $\delta(h
\alpha) = h^{-d} \delta_{\alpha_1,0} \ldots \delta_{\alpha_d,0}$.  We
obtain the general solution of this equation in the
Fourier domain and the asymptotics in the spatial domain,
and determine the same information for the unique outgoing
solutions. 

In the last part of this section we consider a modification of
(\ref{eq:Greens_functions_general_spatial}) where first a function $v$
is determined that satisfies
\begin{equation} \label{eq:Qdelta_source}
  P v = \tilde{Q} \delta
\end{equation}
and then $u$ is set equal to
\begin{equation} \label{eq:Qoperator_to_Qdelta_GF}
  u = \hat{Q} v .
\end{equation}
In this case we assume $\tilde{Q}$ and $\hat{Q}$ are difference
operators of order zero. Based on translation invariance and
dimensional reduction, we assume that their matrix elements
$\tilde{q}_{\alpha,\beta}$ and $\hat{q}_{\alpha,\beta}$ are given by
\begin{equation} \label{eq:define_matrix_elements_q}
  \tilde{q}_{\alpha,\beta} = \tilde{g}_{\alpha - \beta}( h k) , \qquad
  \hat{q}_{\alpha,\beta} = \hat{g}_{\alpha - \beta}( h k) ,
\end{equation}
where the $\tilde{g}_\gamma$, $\hat{g}_\gamma$ are smooth functions
that are only nonzero for $\gamma$ in finite sets 
$\operatorname{Sten}(\tilde{Q})$, $\operatorname{Sten}(\hat{Q})$.
A solution $u$ to such a system will be called a modified Green's
function. Our discretization of the Helmholtz equation will be a system of the
form (\ref{eq:Qdelta_source}) and (\ref{eq:Qoperator_to_Qdelta_GF}),
where $\delta$ is replaced by the right hand side $f$.

\subsection{Symbol and phase slownesses\label{subsec:symbol_phase_slowness}}

To define the symbol, we first define the forward and inverse Fourier
transforms of a function $u(x)$, $x \in (h\ZZ)^d$. They are given by
\begin{align}
  \FourierF u (\xi) = {}& h^d \sum_{x \in (h\ZZ)^d}  u(x) e^{-i \xi \cdot x}
\\
  \label{eq:IFT_integral}
  \FourierF^{-1} U(x) = {}&
    (2\pi)^{-d} \int_{[-\pi/h,\pi/h]^d} U(\xi) e^{i \xi \cdot x} \,
    d\xi ,
\end{align}
where the domain of $\FourierF u$ is $[-\pi/h,\pi/h]^d$.
For constant $k$ the finite difference operator $P$ acts like a
multiplication in the Fourier domain
\begin{equation} 
  \FourierF ( P u )(\xi) = P(\xi) \FourierF u(\xi) .
\end{equation}
where the function $P(\xi)$, called the {\em symbol}, is given by 
\begin{equation}
  P(\xi) = h^{-2} \sum_\gamma f_\gamma(hk) e^{i h\gamma \cdot \xi} .
\end{equation}
This is similar to the continuous case, where the Helmholtz operator 
$H$ acts by multiplication with $H(\xi) = \xi^2 - k^2$ in the
Fourier domain.

The Helmholtz equation has propagating plane wave solutions. These are
functions $u = e^{ix\cdot \xi}$ that satisfy the homogeneous Helmholtz equation
\begin{equation}
  H e^{i \xi\cdot x} = 0 .
\end{equation}
They are exactly the plane waves for which $\xi$ is in the zeroset $Z_H$
of the symbol of $H(\xi) = \xi^2 - k^2$ (This is of course the set of
vectors of length $k$ for $H$ as defined, but the concept applies 
more generally.)
If $P$ is a translation invariant discretization of $-\Delta - k^2$ on
$\RR^d$ we can similarly look for vectors $\xi$ such that
\begin{equation} \label{eq:plane_wave}
  P e^{ix \cdot \xi} = 0 .
\end{equation}
These are the vectors in the zero set $Z_P$ of $P(\xi)$. 

If $P$ is a discretization of the Helmholtz operator $H$ then
typically the set $Z_P$ is close to, but not identical to $Z_H$. In
other words, there are small differences in the wave vectors of
the propagating waves, $Z_P \neq Z_H$. If $Z_P$ and $Z_H$ can be 
parameterized by angle, i.e.\ 
\begin{equation} \label{eq:Z_star_shaped}
  Z_P = \{ g_P(\theta) \theta \,|\, \theta \in S^{d-1} \}  ,
\end{equation}
and similar for $Z_H$ (for $H(\xi) = \xi^2 -k^2$ this is of course the
case), we define the relative wave number error as a function of
$\theta \in S^{d-1}$ by
\begin{equation} \label{eq:define_relative_phase_error}
  \delta_{\rm ph}(\theta) = \frac{g_P(\theta)}{g_H(\theta)} - 1 .
\end{equation}

Closely related quantities are the phase slowness and phase velocity
errors. If $k = \frac{\omega}{c(x)}$, $\xi \in Z_P$, there are
associated phase slowness $s_{\rm ph}$ and phase velocity vectors
$v_{\rm ph}$ given by
\begin{equation}
  s_{\rm ph} = \omega^{-1} \xi , \qquad
  v_{\rm ph} = \frac{\omega \xi}{\| \xi \|^2} ,
\end{equation}
see e.g.\ \cite{Cohen2002}. The quantity $\delta_{\rm ph}$ defined in 
(\ref{eq:define_relative_phase_error}) may hence also be called the
relative phase slowness error, or simply phase slowness error.

The actual phase error between a numerical and an exact solution is
given by (see also subsection~\ref{subsec:discrete_Greensfunction_asymptotics})
\begin{equation} \label{eq:phase_error_from_dispersion_error}
  \text{phase error} = 2\pi \delta_{\rm ph} \frac{L}{\lambda}
\end{equation}
where $L$ is the distance between source and observation point,
$\lambda$ is the wavelength and $\delta_{\rm ph}$ is the phase
slowness error associated with the particular angle of
propagation. Because it is proportional to $L / \lambda$
the phase error easily may become dominant if is not careful 
in the choice of discretization in the high-frequency regime.

\subsection{Dimensional reduction and Helmholtz-like symbols}

In case of coefficients, the symbol for arbitrary $h$ can be expressed
in terms of that for $h=1$
\begin{equation}
  P(\xi) = \frac{1}{h^2} P_1(h\xi ; hk)
\end{equation}
where 
\begin{equation}
  P_1(\xi,k) = \sum_\gamma e^{i\xi \cdot \gamma} f_\gamma(k) .
\end{equation}
By dimensional reduction the symbol $\tilde{Q}(\xi)$, $\hat{Q}(\xi)$ can be written as
\begin{equation}
  \tilde{Q}(\xi) = \tilde{Q}_1 (h\xi ; hk(x)) .
\end{equation}
where
\begin{equation}
  \tilde{Q}_1(\xi,k) = \sum_\gamma e^{i\xi \cdot \gamma} \tilde{g}_\gamma(k) .
\end{equation}
and similar for $\hat{Q}$.

To obtain the results below, we assume that the symbol $P$ is
{\em Helmholtz-like} as defined in the following
\begin{definition}
A symbol $P(\xi)$ is said to be Helmholtz-like if the zero set $Z_P$ can
be parameterized as in (\ref{eq:Z_star_shaped}), $Z_P$ is contained in
$]-\pi/h,\pi/h[^{d}$, $P(0)$ is negative, $\partial P/\partial \xi \neq 0$ at all
points in $Z_P$, and the map 
\begin{equation} \label{eq:define_map_N}
  N : Z_P \rightarrow S^{d-1} : 
  \xi \mapsto \frac{ \partial P/\partial \xi(\xi) }
    { \| \partial P / \partial\xi(\xi) \|}
\end{equation}
that maps a point in $Z_P$ to the unit normal to the surface is a
diffeomorphism. 
\end{definition}
It follows that $P$ is Helmholtz-like if $P_1$ is Helmholtz like.

\subsection{The discrete Green's function\label{subsec:discrete_Greensfunction_asymptotics}}

A Green's function $u(x)$ for the discrete
equation will be defined as a solution of
(\ref{eq:Greens_functions_general_spatial}). If $k$ is constant the
equivalent equation for the Fourier transform $U(\xi)$ reads
\begin{equation} \label{eq:Greens_function_general_Fourier}
  P(\xi) U (\xi) = 1 .
\end{equation}
We will first describe the general solution to this equation in the
Fourier domain. Then we will consider the asymptotics in the spatial
domain. Using the results obtained, we can then derive a unique outgoing
Green's function in the Fourier domain, and state its asymptotics.

Due to the zeros of $P$, problem (\ref{eq:Greens_function_general_Fourier})
has non-unique, distributional solutions. To explain their nature, we
recall the closely related one dimensional problem to determine all
$f$ such that
\begin{equation} \label{eq:simplified_distribution_problem}
  x f(x) = 1 .
\end{equation}
We also write this as $M_x f = 1$, where $M_x$ is the multiplication
operator by the function $x$. 
The solutions to (\ref{eq:simplified_distribution_problem})
are the distributions of the form \cite{Grubb2009}
\begin{equation} \label{eq:sol_simplified_distribution_problem}
  f(x) = \PV \frac{1}{x} + b \delta ,
\end{equation}
where $b$ is a free constant.
Here the distribution $\PV \frac{1}{x}$ is defined by
\begin{equation}
  \langle \PV \frac{1}{x} , \phi \rangle
  = 
  \lim_{\epsilon \rightarrow 0} 
    \int_{\RR \backslash [-\epsilon,\epsilon]} \frac{\phi(x)}{x} \, dx .
\end{equation}
In other words $\delta$ is in the kernel of $M_x$, while 
$\PV \frac{1}{x}$ is a particular solution to 
(\ref{eq:simplified_distribution_problem}).

In case of (\ref{eq:Greens_function_general_Fourier}) we similarly have a
nonzero kernel of $M_P$, with elements $B S_{Z_P}$, where $S_{Z_P}$ 
denotes the singular function of $Z_P$, which is the
distribution given by
\begin{equation}
  S_{Z_P}(\phi) = \int_{Z_P} \phi(x) \, dS(x) ,
\end{equation}
and $B$ is any distribution on $Z_P$.  For functions $f$ on $\RR^d$ 
with zero set $\tilde{Z}$ such that $\nabla f(y) \neq 0$ for all $y \in \tilde{Z}$, 
the principal value $u = \PV \frac{1}{f(y)}$ 
can be defined as follows. Let $\tilde{Z}_\epsilon = \{ x \,| \, 
d(x,\tilde{Z}) < \epsilon \}$, then
\begin{equation}
  \langle \PV \frac{1}{f(y)},\phi \rangle
  = \lim_{\epsilon \rightarrow 0}
  \int_{\RR^d \bs \tilde{Z}_\epsilon} \frac{\phi(y)}{f(y)} \, dy .
\end{equation}
We obtain

\begin{proposition} \label{prop:Fourier_general_sol}
The solutions to (\ref{eq:Greens_function_general_Fourier}) 
are given by
\begin{equation} \label{eq:general_inverse_Fourier_domain}
  U(\xi) = \PV \frac{1}{P(\xi)} + B S_{Z_P} 
\end{equation}
where $B$ can be any distribution on $Z_P$. 
\end{proposition}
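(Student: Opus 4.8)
The plan is to reduce the problem to the model case \eqref{eq:simplified_distribution_problem} by a partition of unity and local coordinates, using the hypothesis that $P$ is Helmholtz-like (so $\nabla P \neq 0$ on $Z_P$, and $Z_P$ is a compact smooth hypersurface contained in the open cube $]-\pi/h,\pi/h[^d$). First I would verify that both summands on the right-hand side of \eqref{eq:general_inverse_Fourier_domain} are well-defined distributions: the singular-function term $B S_{Z_P}$ is a distribution of order $0$ (or higher, depending on $B$) supported on $Z_P$, and the principal value $\PV \frac1{P(\xi)}$ is well-defined precisely because $\nabla P \neq 0$ on $Z_P$, which is exactly the condition quoted in the excerpt for defining $\PV\frac1{f}$. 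Then I would check that $\PV\frac1{P}$ is a particular solution, i.e. $M_P \bigl(\PV\frac1{P}\bigr) = 1$: away from $Z_P$ this is the pointwise identity $P(\xi)\cdot\frac1{P(\xi)}=1$, and the content is that multiplying by $P$ (which vanishes on $Z_P$) kills the ``singular part'' of the principal value, so no $\delta$-like contribution on $Z_P$ survives. This is a short computation testing against $\phi$ and using that $P\phi$ is smooth and vanishes on $Z_P$, so $\int_{\bs Z_\epsilon} \frac{P\phi}{P}\,d\xi \to \int P\phi/P = \int \phi$.

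Next I would show that $M_P B S_{Z_P} = 0$ for every distribution $B$ on $Z_P$: since $P$ vanishes on $Z_P$ and $S_{Z_P}$ (and its normal derivatives, if $B$ is of positive order) are supported there, one pairs against a test function $\phi$ and rewrites $\langle M_P(B S_{Z_P}), \phi\rangle = \langle B S_{Z_P}, P\phi\rangle$; because $P\phi$ vanishes to the appropriate order on $Z_P$, this is zero. (For $B$ merely continuous this is immediate; for distributional $B$ one invokes the structure theorem for distributions supported on a submanifold together with $P|_{Z_P}=0$ and $\partial_\nu^j(P\phi)|_{Z_P}$ controlled by $P|_{Z_P}=0$ and lower derivatives.) Combining the two computations shows every function of the form \eqref{eq:general_inverse_Fourier_domain} solves \eqref{eq:Greens_function_general_Fourier}.

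The remaining, and main, task is the converse: that \emph{every} solution has this form, i.e. the kernel of $M_P$ is exactly $\{B S_{Z_P}\}$. Given two solutions, their difference $V$ satisfies $P(\xi) V(\xi) = 0$, so $\supp V \subseteq Z_P$. The heart of the argument is local: cover $Z_P$ by finitely many charts in which, after a smooth change of coordinates, $P$ becomes the coordinate function $\xi_1$ (possible since $\nabla P \neq 0$, by the implicit function theorem / submersion straightening), use a subordinate partition of unity, and reduce to showing that a distribution $W$ in $\RR^d$ with $\xi_1 W = 0$ is of the form $\sum_j b_j(\xi') \otimes \delta^{(j)}(\xi_1)$ — the higher-order terms $\delta^{(j)}$ with $j\geq 1$ correspond to transverse derivatives, which after reassembly give distributions $B$ on $Z_P$ that are not merely functions. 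The one-variable fact $\xi_1 W = 0 \Rightarrow W = \sum_j b_j(\xi')\delta^{(j)}(\xi_1)$ is the tensored version of the statement that the kernel of $M_x$ on $\SchwartzD'(\RR)$ is spanned by $\delta$ (for the equation $xW=0$; one also needs $x W = 0$ forces only $j=0$, so in fact $W = b(\xi')\delta(\xi_1)$, giving exactly $B S_{Z_P}$ with $B$ a distribution on $Z_P$). The main obstacle is handling the distributional (not just continuous) nature of $B$ and making the patching of local pieces via the partition of unity genuinely coordinate-independent, so that the local densities $b(\xi')$ glue to a well-defined distribution $B$ on the manifold $Z_P$; this is routine microlocal/distribution-theory bookkeeping but is the only step requiring real care. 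I would cite \cite{Grubb2009} (or Hörmander) both for the one-dimensional model \eqref{eq:sol_simplified_distribution_problem} and for the structure of distributions supported on a hypersurface, and otherwise keep the write-up to the partition-of-unity reduction plus the two short computations above.
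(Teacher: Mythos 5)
Your proposal is correct and takes essentially the same route as the paper, which states the proposition with only a sketch: a particular solution $\PV \frac{1}{P(\xi)}$ plus the kernel of $M_P$, by reduction to the one-dimensional model $xf=1$ quoted from \cite{Grubb2009}. Your write-up simply supplies the details that sketch presupposes — the two verification computations and the local straightening argument showing $\xi_1 W = 0$ forces $W = b(\xi')\otimes\delta(\xi_1)$, hence kernel elements of the form $B S_{Z_P}$.
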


The freedom in the choice of $B$ is related to the fact that in the
spatial domain one can add any linear combination of plane waves 
$e^{i x \cdot \xi}$ with $\xi \in Z_P$ and still have a solution.

Let $u(x)$ be the inverse Fourier transform of a solution $U(\xi)$
for some smooth $B$
\begin{equation} \label{eq:IFT_general_Greens_function}
  u(x) = (2\pi)^{-d} \int_{[-\pi/h,\pi/h]^d} 
    \PV \frac{1}{P(\xi)} e^{i x \cdot \xi} \, d\xi
    + (2\pi)^{-d} \int_{Z_P} B e^{i x \cdot \xi} .
\end{equation}
We will study the asymptotic behavior of this integral for large
$\|x\|$ using the method of stationary phase.  For
$p \in Z_P$, we define a certain curvature-like quantity $K(p)$ as
follows. After rotating the coordinates, we may assume that 
$\partial P/\partial \xi(p)$ is parallel to the $d$-th coordinate 
axis and that $Z_P$ is locally a graph
\begin{equation} \label{eq:parameterize_Z_xi_d}
  \xi_d = g(\xi_1,\ldots,\xi_{d-1}) .
\end{equation}
By the assumptions $g$ has a nondegenerate local
maximum at $(p_1,\ldots,p_{d-1})$.  
Let $-\lambda_1, \ldots, -\lambda_{d-1}$ denote the
eigenvalues of the second derivative matrix $\frac{\partial^2 g}
{\partial (\xi_1,\ldots,\xi_{d-1})^2}(p_1,\ldots,p_{d-1})$.  We define
\begin{equation} \label{eq:define_K_p}
  K(p) = \lambda_1 \lambda_2 \ldots \lambda_{d-1} .
\end{equation}
For $d=3$ this is the Gaussian curvature of the surface.
In the following proposition $N^{-1}$
denotes the inverse function of the map $N$ defined in
(\ref{eq:define_map_N}).
The result and its proof have some similarity with
results of Lighthill \cite{Lighthill1960}.

\begin{proposition} \label{prop:asymptotics}
Let $u$ be the inverse Fourier transform of a distribution $U$ as in
(\ref{eq:general_inverse_Fourier_domain}) such that
$B$ is a $C^\infty$ function on
$Z_P$. Let $\xi_+ = \xi_+(x) = N^{-1}(x / \|x\|)$ and $\xi_- =
\xi_-(x) = N^{-1}(-x/\|x\|)$. The function $u$ satisfies
\begin{equation} \label{eq:Greens_function_general_asymptotic}
\begin{aligned}
  u(x) = {}& (2\pi)^{-\frac{d+1}{2}} e^{-\frac{(d-1) \pi i}{4}}
   \|x\|^{-\frac{d-1}{2}} K(\xi_+)^{-1/2}  
    \bigg( \frac{\pi \, i}{\| \partial P/\partial\xi(\xi_+) \|} + B(\xi_+) \bigg)
 e^{i x \cdot \xi_+}
\\
  & + (2\pi)^{-\frac{d+1}{2}} e^{\frac{(d-1) \pi i}{4}}
   \|x\|^{-\frac{d-1}{2}} K(\xi_-)^{-1/2} 
    \bigg( - \frac{\pi \, i}{\| \partial P/\partial\xi(\xi_-) \|} + B(\xi_-) \bigg)
e^{i x \cdot \xi_-}
\\
&   + O(\|x\|^{-1/2-d/2}) , \qquad \|x\| \rightarrow  \infty .
\end{aligned}
\end{equation}
\end{proposition}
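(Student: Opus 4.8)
The plan is to compute the asymptotics of the integral in \eqref{eq:IFT_general_Greens_function} by the stationary phase method, treating the two terms separately. For the singular part $\int \PV \frac{1}{P(\xi)} e^{ix\cdot\xi}\,d\xi$, I would first localize: away from $Z_P$ the integrand is smooth, so integration by parts in $\xi$ shows that region contributes only $O(\|x\|^{-\infty})$, as does any compact piece of $Z_P$ on which $x\cdot\xi$ has no critical point along $Z_P$. Since $x\cdot\xi$ restricted to $Z_P$ is stationary exactly where the normal $N(\xi)$ is parallel to $\pm x$, and $N$ is a diffeomorphism onto $S^{d-1}$ by the Helmholtz-like assumption, there are precisely two stationary points, $\xi_+ = N^{-1}(x/\|x\|)$ and $\xi_- = N^{-1}(-x/\|x\|)$. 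So the whole contribution comes from small neighborhoods of $\xi_\pm$.

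Near $\xi_+$ I would rotate coordinates as in \eqref{eq:parameterize_Z_xi_d} so that $\partial P/\partial\xi(\xi_+)$ is along the $\xi_d$-axis and $Z_P$ is the graph $\xi_d = g(\xi_1,\dots,\xi_{d-1})$, and then change variables to straighten the zero set, e.g.\ $(\xi_1,\dots,\xi_{d-1},P)$ as new coordinates in place of $(\xi_1,\dots,\xi_d)$, with Jacobian $\|\partial P/\partial\xi\|^{-1}$ at the zero set. In these coordinates the $P$-integral decouples and the principal value produces a one-dimensional $\PV\frac1t$, whose Fourier transform is $-\pi i\,\sgn(\cdot)$; this is where the factor $\frac{\pi i}{\|\partial P/\partial\xi(\xi_+)\|}$ and its sign change between $\xi_+$ and $\xi_-$ come from. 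The remaining $(d-1)$-dimensional integral in $(\xi_1,\dots,\xi_{d-1})$ has phase $x\cdot\xi$ with a nondegenerate critical point whose Hessian, up to the scalar $\|x\|$ and the factor coming from the $\xi_d=g(\cdots)$ substitution, is the Hessian of $g$. Standard stationary phase in $d-1$ variables then yields $(2\pi/\|x\|)^{(d-1)/2}$ times $|\det g''|^{-1/2} = K(\xi_+)^{-1/2}$ (using \eqref{eq:define_K_p}), a signature factor $e^{\mp(d-1)\pi i/4}$ from the fact that $g$ has a nondegenerate \emph{maximum}, and the carrier $e^{ix\cdot\xi_+}$. The smooth part $\int_{Z_P} B\,e^{ix\cdot\xi}$ is a pure surface integral with the same phase and the same two stationary points, so ordinary stationary phase on $Z_P$ gives the $B(\xi_\pm)$ terms with identical prefactors $(2\pi)^{-(d+1)/2}e^{\mp(d-1)\pi i/4}\|x\|^{-(d-1)/2}K(\xi_\pm)^{-1/2}$, which is exactly why the two contributions combine inside one parenthesis in \eqref{eq:Greens_function_general_asymptotic}. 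Collecting powers of $2\pi$ and the error term $O(\|x\|^{-(d-1)/2-1/2}) = O(\|x\|^{-1/2-d/2})$ from the next order in stationary phase finishes the computation.

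The main obstacle is the rigorous treatment of the principal value near $Z_P$: one must justify interchanging the $\PV$ limit with the oscillatory integration and the coordinate change, and show that the straightening map sends $\PV\frac{1}{P(\xi)}$ to $\PV\frac1t$ tensored against a smooth density, so that the one-dimensional $\PV\frac1t$ Fourier transform can be applied fiberwise. A clean way is to write $\PV\frac1P = \tfrac12\big(\frac{1}{P+i0} + \frac{1}{P-i0}\big)$, handle each $\frac{1}{P\pm i0}$ by contour-shifting $\xi_d$ slightly off the real axis (legitimate since $\partial P/\partial\xi_d\neq 0$ near $\xi_\pm$) to reduce to a genuine oscillatory integral with smooth amplitude, apply multidimensional stationary phase there, and recombine; the $i0$ prescriptions are what generate the $\pm\pi i$. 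I would also need to check that the error terms from the localization and from the remainder in stationary phase are uniform in the direction $x/\|x\|$, which follows from compactness of $Z_P$ and the fact that $N$ is a global diffeomorphism, so $\xi_\pm(x)$ and all the geometric quantities depend smoothly and boundedly on $x/\|x\|$.
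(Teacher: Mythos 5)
Your proposal is correct and follows essentially the same route as the paper's proof: localization by a partition of unity and non-stationary phase, identification of $\xi_\pm$ via the diffeomorphism $N$, a transverse one-dimensional $\PV$ Fourier transform producing the $\pm\pi i/\|\partial P/\partial\xi(\xi_\pm)\|$ factors, tangential stationary phase over the graph $\xi_d=g(\xi')$ producing $K(\xi_\pm)^{-1/2}$ and the signature factors, and a separate direct stationary-phase treatment of the surface term $\int_{Z_P}B\,e^{ix\cdot\xi}\,dS$. Two small remarks: the paper sidesteps your fiberwise-$\PV$ concern by keeping $\xi_d$ itself as the transverse variable (so the phase is exactly $\tau\xi_d$, linear) and splitting $\chi\psi\PV\frac1P=h_0+h_1(\xi')\PV\frac1{\xi_d-g(\xi')}-h_1(1-\psi_2)\frac1{\xi_d-g(\xi')}$ rather than straightening by $P$; and the stationary-phase remainder gains a full power, $O(\|x\|^{-(d-1)/2-1})$, which is what yields the stated $O(\|x\|^{-1/2-d/2})$ (your intermediate exponent $-(d-1)/2-\tfrac12$ equals $-d/2$ and would be too weak).
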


\begin{proof}
We start with the first integral in
(\ref{eq:IFT_general_Greens_function}). For $x \in (h \ZZ)^d$ the
domain is really a torus and the integrand is $C^\infty$ as a function
on the torus. It is convenient to replace the integral on the torus 
by an integral over a bounded subset of $\RR^d$. Let $\psi_1$ be a 
smooth, positive function 
supported in $[-\pi/h - \eta, \pi /h + \eta]$, that is one on
$]-\pi/h+\eta,\pi/h-\eta[$ and satisfies 
$\sum_{l=-\infty}^\infty \psi_1(x+ 2\pi l /h) = 1$ for $x \in \RR$, and let
\begin{equation}
  \psi(x) = \psi_1(x_1) \ldots \psi_1(x_d) 
\qquad x \in \RR^d
\end{equation}
Then we can write
\begin{equation} \label{eq:IFT_over_RRd}
  (2\pi)^{-d} \int_{[-\pi/h,\pi/h]^d} 
    \PV \frac{1}{P(\xi)} e^{i x \cdot \xi} \, d\xi
  = (2\pi)^{-d} \int_{\RR^d} \psi \PV \frac{1}{P_{\rm per}(\xi)} e^{i x \cdot \xi} \, d\xi
\end{equation}
for $x \in (h\ZZ)^d$, where
$P_{\rm per}$ is the periodic extension of $P$,
and this formula may also be considered for $x
\in \RR^d$. We assume $\eta$ is sufficiently small such that $Z_P$ is
supported in $ ]-\pi/h+\eta,\pi/h-\eta[ ^d$.

We will write $x = \tau v$, $v \in S^{d-1}$ and consider the limit
$\tau \rightarrow \infty$. We assume coordinates are rotated such that
$v = (0, \ldots, 0, 1)$, using the same notation for the new
coordinates as used so far for the old coordinates.

The integral on the right hand side of (\ref{eq:IFT_over_RRd}) will be
written as a sum of integrals over subsets using a partition of unity.
For some smooth cutoff function $\chi$, denote
\begin{equation}
  I_\chi 
  = (2\pi)^{-d} \int 
    \chi(\xi) \psi(\xi) \PV \frac{1}{P_{\rm per}(\xi)} 
    e^{i \tau \xi_d} \, d\xi
\end{equation}
We may assume there are four different types of $\chi$
\begin{enumerate}[(i)]
\item
$\chi = \chi_+$ is one on a neighborhood of $\xi_+$
\item
$\chi = \chi_-$ is one on a neighborhood of $\xi_-$
\item
on $\supp \chi \cap Z_P$ we can write $Z_P$ as a graph
$\xi_k = g(\xi_1,\ldots,\xi_{k-1},\xi_{k+1},\xi_d)$
\item
$\supp \chi \cap Z_P = \emptyset$
\end{enumerate}

We consider these four cases in the limit $\tau \rightarrow \infty$
using the method of stationary phase \cite{Duistermaat1996}.
In case (iv) the integral $I_\chi = O(\tau^{-N})$ for any $N$ by the
lemma of non-stationary phase and we
don't need to consider this case further.
In case (iii) we can write
\begin{equation}
  \chi \psi \PV \frac{1}{P_{\rm per}(\xi)} 
  = C(\xi) \PV \frac{1}{\xi_k -
    g(\xi_1,\ldots,\xi_{k-1},\xi_{k+1},\ldots,\xi_d)}
\end{equation}
for some smooth function $C(\xi)$
and perform the integral over $\xi_k$. This yields a smooth function
of $(\xi_1,\ldots,\xi_{k-1},\xi_{k+1},\xi_d)$. By the lemma
of non-stationary phase it follows that again $I_\chi = O(\tau^{-N})$.

In case (i) we can write $Z_P$ locally as a graph
$\xi_d = g(\xi_1, \ldots,\xi_{d-1})$. For brevity denote
$\xi' = (\xi_1, \ldots,\xi_{d-1})$
We observe that we can write
\begin{equation}
  \chi \psi \PV \frac{1}{P} =
  h_0(\xi)
  + h_1(\xi') \PV \frac{1}{\xi_d - g(\xi')}
  - h_1(\xi') 
    (1-\psi_2(\xi_d - g(\xi'))) \frac{1}{\xi_d - g(\xi')} .
\end{equation}
where $h_0, h_1, \psi$ are smooth, compactly supported functions and 
$\psi_2 = 1$ around
$0$ and $h_1(\xi_+') = \frac{1}{\| \partial P/\partial\xi(\xi_+) \|}$.
Then for the first term the lemma of non-stationary phase can be
invoked. Hence this term is $O(\tau^{-N})$ for any $N$.
For the third term, the same result can be obtained using integration
by parts. For the second part we recall the
standard Fourier transform
$\FourierF \PV \frac{1}{y} = -i \pi \sgn(\eta)$,
it follows that
\begin{equation}
  \FourierF^{-1} \PV \frac{1}{\eta - a} 
  = e^{i a y} \frac{i}{2} \sgn(y) .
\end{equation}
As a consequence, we obtain
\begin{equation} \label{eq:I_chi_plus_xiprime}
  I_{\chi_+} = (2\pi)^{-(d-1)} \frac{i}{2} 
  \int h_1(\xi') e^{i \tau g(\xi')} \, d\xi'
  + O(\tau^{-N})
\end{equation}
any $N$.
We can now apply the stationary phase lemma. The function $g$ has its
maximum at $\xi_+'$ and can be expanded as, possibly after a further
rotation of coordinates
\begin{equation}
  g(\xi_1,\ldots,\xi_{d-1})
  = v \cdot \xi_+ 
  - \sum_{j=1}^{d-1} \lambda_j (\xi_j - (\xi_{+})_j)^2 +
  O(\|\xi'-\xi_+'\|^3) ,
\end{equation}
see the discussion preceding (\ref{eq:define_K_p}). This yields
\begin{equation}
  I_{\chi_+} = \frac{i}{2} (2\pi)^{-(d-1)/2} \frac{1}{\| \partial P/\partial\xi(\xi_+) \|}
    e^{-(d-1) \frac{\pi i}{4}} e^{i\tau v \cdot \xi_+} \tau^{-(d-1)/2} K(\xi_+)^{-1/2} .
\end{equation}
The contribution $I_{\chi_-}$ in case (ii) can be computed similarly,
resulting in 
\begin{equation}
  I_{\chi_-} = - \frac{i}{2} (2\pi)^{-(d-1)/2} \frac{1}{\| \partial P/\partial\xi(\xi_-) \|}
    e^{(d-1) \frac{\pi i}{4}} e^{i\tau v \cdot \xi_-} \tau^{-(d-1)/2} K(\xi_-)^{-1/2} .
\end{equation}

For the surface integral
\begin{equation} \label{eq:surface_integral_prop2}
  (2\pi)^{-d} \int_{Z_P} B(\xi) e^{i x \cdot \xi} \, dS(\xi) ,
\end{equation}
we again assume $x = \tau v$ and consider the limit $\tau  \rightarrow
\infty$. A partition of unity is applied and, by the method of
stationary phase, the only contributions that are not $O(\tau^{-N})$
for any $N$ come from neighborhoods of $\xi_\pm$. To determine the
contribution from a neighborhood of $\xi_+$, 
we assume that $v$ is parallel to the $d$-th coordinate
axis so that $Z_P$ is locally given by a graph
$\xi_d = g(\xi')$, $\xi' = (\xi_1,\ldots,\xi_{d-1})$.
The method of stationary phase can be applied directly. The only
contributions come from neighborhoods of $\xi_\pm$, and can be
computed similarly as for the integral (\ref{eq:I_chi_plus_xiprime}).

The contribution $I_{\chi_+}$, $I_{\chi_-}$ and the two contributions
from the integral (\ref{eq:surface_integral_prop2}) together give the
result. 
\end{proof}

It is straightforward to obtain the outgoing solutions to
(\ref{eq:Greens_functions_general_spatial}) and
(\ref{eq:Greens_function_general_Fourier}). 
In (\ref{eq:Greens_function_general_asymptotic})
the term with phase factor $e^{i\tau v \cdot \xi_-}$ must vanish, and
we obtain the equation
\begin{equation}
  B(\xi) = \frac{\pi i}{\| \partial P/\partial\xi(\xi) \|} ,
    \qquad
    \text{for $\xi \in Z_P$.}
\end{equation}
We state this as a theorem and include the asymptotic expression for
the solution in the result.

\begin{theorem} \label{th:outgoing_asymptotics}
The outgoing solution to (\ref{eq:Greens_function_general_Fourier}) is given by
\begin{equation}
  U(\xi) = \PV \frac{1}{P(\xi)} + \frac{\pi i S_{Z_P}(\xi)}{\| \partial P/\partial\xi(\xi) \|} .
\end{equation}
Its inverse Fourier transform $u(x)$ satisfies
\begin{equation} \label{eq:asym_outgoing_solution}
\begin{aligned}
  u(x) = {}& (2\pi)^{-\frac{d-1}{2}} e^{-\frac{(d-1) \pi i}{4}}
   \|x\|^{-\frac{d-1}{2}}  
     \frac{i \, K(\xi_+)^{-1/2}}{\| \partial P/\partial\xi(\xi_+) \|} 
 e^{i x \cdot \xi_+}
  + O(\|x\|^{-1/2-d/2}) ,
\end{aligned}
\end{equation}
where $K$ and $\xi_\pm$ are as in proposition~\ref{prop:asymptotics}.
\end{theorem}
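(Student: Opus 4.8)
The plan is to obtain both assertions directly from Propositions~\ref{prop:Fourier_general_sol} and~\ref{prop:asymptotics}, the only real content being to identify the distribution $B$ that the radiation condition singles out. By Proposition~\ref{prop:Fourier_general_sol} every solution of (\ref{eq:Greens_function_general_Fourier}) has the form $U(\xi)=\PV\frac1{P(\xi)}+B\,S_{Z_P}$; since any such $U$ solves the equation irrespective of $B$, it suffices to determine $B$ from the requirement that the solution be outgoing, i.e.\ that the leading asymptotics of $u=\FourierF^{-1}U$ contain no incoming wave — the term carrying the phase $e^{ix\cdot\xi_-}$ in (\ref{eq:Greens_function_general_asymptotic}) must be absent.

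First I would note that the candidate $B(\xi)=\pi i/\|\partial P/\partial\xi(\xi)\|$ is a bona fide, \emph{smooth} distribution on $Z_P$: because $P$ is Helmholtz-like, $\partial P/\partial\xi$ is nonvanishing on $Z_P$, so this $B$ is $C^\infty$ and Proposition~\ref{prop:asymptotics} applies to the associated $U$. Next, imposing that the incoming term vanish for every $x$ amounts to requiring the bracket $-\pi i/\|\partial P/\partial\xi(\xi_-)\|+B(\xi_-)$ to be zero, where $\xi_-=\xi_-(x)=N^{-1}(-x/\|x\|)$. Here I would invoke that $N:Z_P\to S^{d-1}$ is a diffeomorphism (part of the Helmholtz-like hypothesis): as $x$ ranges over all nonzero vectors, $-x/\|x\|$ sweeps out all of $S^{d-1}$, so $\xi_-$ ranges over all of $Z_P$. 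Hence the vanishing of the incoming term at every $x$ is equivalent to the pointwise identity $B(\xi)=\pi i/\|\partial P/\partial\xi(\xi)\|$ on $Z_P$, which gives exactly the stated $U(\xi)=\PV\frac1{P(\xi)}+\pi i\,S_{Z_P}(\xi)/\|\partial P/\partial\xi(\xi)\|$; conversely this choice makes the incoming term vanish identically, so it is the outgoing solution.

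Finally, for the asymptotic formula I would substitute $B(\xi_\pm)=\pi i/\|\partial P/\partial\xi(\xi_\pm)\|$ into (\ref{eq:Greens_function_general_asymptotic}). The second line vanishes as just observed; in the first line the bracket becomes $2\pi i/\|\partial P/\partial\xi(\xi_+)\|$, and the prefactor $(2\pi)^{-(d+1)/2}\cdot 2\pi i$ collapses to $i\,(2\pi)^{-(d-1)/2}$, which reproduces (\ref{eq:asym_outgoing_solution}); the remainder $O(\|x\|^{-1/2-d/2})$ is inherited unchanged from Proposition~\ref{prop:asymptotics}. The step I expect to require the most care is not any of these computations but making the notion ``outgoing'' precise: one should be clear that it is defined by the absence of the $e^{ix\cdot\xi_-}$ contribution in the leading asymptotics, that the surjectivity of $x\mapsto\xi_-(x)$ forces the value of $B$ on \emph{all} of $Z_P$, and hence that no distributional (non-smooth) $B$ can enter — so that the outgoing solution is unique and Proposition~\ref{prop:asymptotics} is legitimately applicable to it.
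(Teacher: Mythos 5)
Your proposal is correct and follows essentially the same route as the paper: the paper also obtains the outgoing solution by demanding that the $e^{i x\cdot\xi_-}$ term in (\ref{eq:Greens_function_general_asymptotic}) vanish, which forces $B(\xi)=\pi i/\|\partial P/\partial\xi(\xi)\|$ on $Z_P$, and then substitutes this back into Proposition~\ref{prop:asymptotics} to get (\ref{eq:asym_outgoing_solution}). Your extra remarks on the surjectivity of $x\mapsto\xi_-(x)$ and on the smoothness of the resulting $B$ are consistent with, and slightly more explicit than, the paper's brief argument.
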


The above analysis can be repeated for continuous, Helmholtz like
operators with the same result (see also \cite{Lighthill1960}).
For the usual Helmholtz operator $H$ in $d=3$ dimensions we have that 
$Z_H$ given by $\| \xi \| = k$, $\| \partial H / \partial \xi \| 
\, |_{\xi\in Z_H} = 2 k$, 
$K_+ = \frac{1}{k^2}$ and 
\begin{equation}
  u(x) = \frac{1}{4\pi \|x\|} e^{i k \| x\|} + O(\|x\|^{-2}) ,
\end{equation}
so that the highest order asymptotic expansion actually equals the
well known outgoing Green's function.

\subsection{The modified discrete Green's function\label{subsec:modified_Greens_function}}
Let
\begin{equation}
  H_1(\xi,k) = \xi^2 - k^2 .
\end{equation}
It follows from theorem~\ref{th:outgoing_asymptotics} that if $H_1$
and $P_1$ have the same zero sets, i.e.\ identical phase slownesses,
then the solutions to $H u = \delta$ and $P u = \delta$ have
asymptotically the same phase. The amplitudes however will differ by a
factor $\frac{\| \partial H_1/\partial\xi \|}{\| \partial P_1/\partial\xi
  \|} $ evaluated at the zero set. 
In this subsection we consider therefore the solutions $u$ to the equations
(\ref{eq:Qdelta_source}) and (\ref{eq:Qoperator_to_Qdelta_GF}), which,
as we will see, obtain different amplitudes.

The Fourier transformed solution $U(\xi)$ to (\ref{eq:Qdelta_source}) 
and (\ref{eq:Qoperator_to_Qdelta_GF}) is given by the product of the solution $U$
given in proposition~\ref{prop:Fourier_general_sol} and a factor 
$\tilde{Q}(\xi) \hat{Q}(\xi)$. 
Using this, we can formulate a result similar to
Theorem~\ref{th:outgoing_asymptotics}. In this case the adjective
outgoing refers to the solution $v$ of (\ref{eq:Qdelta_source}). The
result can be proven by similar arguments as used to prove 
proposition~\ref{prop:asymptotics} and theorem~\ref{th:outgoing_asymptotics}.

\begin{theorem} \label{th:modified_Greens_function}
The Fourier transform of the outgoing solution to
(\ref{eq:Qdelta_source}) and (\ref{eq:Qoperator_to_Qdelta_GF}) is
given by
\begin{equation}
  U(\xi) = \tilde{Q}(\xi) \hat{Q}(\xi) \left( 
\PV \frac{1}{P(\xi)} + \frac{\pi i S_{Z_P}(\xi)}{\| \partial P/\partial\xi(\xi) \|} .
\right) .
\end{equation}
Its inverse Fourier transform $u(x)$ satisfies
\begin{equation} 
\begin{aligned}
  u(x) = {}& (2\pi)^{-\frac{d-1}{2}} e^{-\frac{(d-1) \pi i}{4}}
   \|x\|^{-\frac{d-1}{2}}  
     \frac{i \, \tilde{Q}(\xi_+) \hat{Q}(\xi_+) 
    K(\xi_+)^{-1/2}}{\| \partial P/\partial\xi(\xi_+) \|}  e^{i x \cdot \xi_+}
  + O(\|x\|^{-1/2-d/2}) ,
\end{aligned}
\end{equation}
where $K$ and $\xi_\pm$ are as in proposition~\ref{prop:asymptotics}.
\end{theorem}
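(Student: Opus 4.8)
The plan is to transport the whole system into the Fourier domain, observe that it reduces to the scalar equation already solved in Proposition~\ref{prop:Fourier_general_sol} and Theorem~\ref{th:outgoing_asymptotics}, and then re-run the stationary-phase argument of Proposition~\ref{prop:asymptotics} with the two extra smooth factors $\tilde{Q}(\xi)$ and $\hat{Q}(\xi)$ carried along. First I would Fourier transform (\ref{eq:Qdelta_source}) and (\ref{eq:Qoperator_to_Qdelta_GF}); since $\FourierF\delta\equiv 1$ and $\tilde{Q},\hat{Q}$ act in the Fourier domain as multiplication by their (smooth, trigonometric-polynomial) symbols, they become $P(\xi)V(\xi)=\tilde{Q}(\xi)$ and $U(\xi)=\hat{Q}(\xi)V(\xi)$ with $V=\FourierF v$, $U=\FourierF u$. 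A particular solution of the first equation is $V_0=\tilde{Q}\,\PV\frac{1}{P}$: the product of the smooth function $\tilde{Q}$ with the distribution $\PV\frac{1}{P}$ is well defined because $\partial P/\partial\xi\neq 0$ on $Z_P$, and $M_P V_0=(P\tilde{Q})\,\PV\frac{1}{P}=\tilde{Q}\,(P\,\PV\frac{1}{P})=\tilde{Q}$ by Proposition~\ref{prop:Fourier_general_sol}. Adding the kernel of $M_P$, which by that same proposition consists of the distributions $B\,S_{Z_P}$ with $B$ a distribution on $Z_P$, gives $V=\tilde{Q}\,\PV\frac{1}{P}+B\,S_{Z_P}$, hence $U=\tilde{Q}\hat{Q}\,\PV\frac{1}{P}+(\hat{Q}B)\,S_{Z_P}$, where I used that multiplying an $S_{Z_P}$-supported distribution by a smooth function only evaluates that function on $Z_P$, so $\hat{Q}B$ is again a distribution on $Z_P$.

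To fix the outgoing solution I would first compute the large-$\|x\|$ asymptotics of $v=\FourierF^{-1}V$ by repeating the proof of Proposition~\ref{prop:asymptotics} for the integrand $\chi\psi\,\tilde{Q}\,\PV\frac{1}{P}$ together with the surface amplitude $B$. The partition of unity, the estimates in cases (iii)--(iv), and the Fourier-transform identity $\FourierF^{-1}\PV\frac{1}{\eta-a}=\frac{i}{2}\sgn(y)\,e^{iay}$ are unchanged; the only modification is that in cases (i) and (ii) the smooth prefactor $h_1$, which still depends only on $\xi'$, now takes the value $\tilde{Q}(\xi_\pm)/\|\partial P/\partial\xi(\xi_\pm)\|$ at the critical point. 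The stationary-phase lemma then yields for $v$ the two-term expansion (\ref{eq:Greens_function_general_asymptotic}) with $\|\partial P/\partial\xi(\xi_\pm)\|^{-1}$ replaced by $\tilde{Q}(\xi_\pm)\,\|\partial P/\partial\xi(\xi_\pm)\|^{-1}$; in particular the coefficient of $e^{ix\cdot\xi_-}$ is proportional to $-\pi i\,\tilde{Q}(\xi_-)/\|\partial P/\partial\xi(\xi_-)\|+B(\xi_-)$. Requiring this term to vanish for all $x$, and using that the normal map $N$ of (\ref{eq:define_map_N}) is a diffeomorphism so that $\xi_-=N^{-1}(-x/\|x\|)$ runs over all of $Z_P$, forces $B(\xi)=\pi i\,\tilde{Q}(\xi)/\|\partial P/\partial\xi(\xi)\|$ on $Z_P$ (this stays consistent even where $\tilde{Q}$ vanishes on $Z_P$, so no genericity hypothesis is needed). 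Then $B\,S_{Z_P}=\tilde{Q}\cdot\pi i\,S_{Z_P}/\|\partial P/\partial\xi\|$, hence $V=\tilde{Q}\bigl(\PV\frac{1}{P}+\pi i\,S_{Z_P}/\|\partial P/\partial\xi\|\bigr)$ and $U=\tilde{Q}\hat{Q}\bigl(\PV\frac{1}{P}+\pi i\,S_{Z_P}/\|\partial P/\partial\xi\|\bigr)$, which is the asserted formula for the Fourier transform of the modified Green's function.

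It remains to read off the asymptotics of $u=\FourierF^{-1}U$. Since $U$ is now the outgoing Fourier Green's function of Theorem~\ref{th:outgoing_asymptotics} multiplied by the smooth factor $\tilde{Q}\hat{Q}$, the same stationary-phase computation applies with $h_1$ evaluating to $\tilde{Q}(\xi_\pm)\hat{Q}(\xi_\pm)/\|\partial P/\partial\xi(\xi_\pm)\|$ at the critical points and the singular amplitude equal to $\pi i\,\tilde{Q}\hat{Q}/\|\partial P/\partial\xi\|$; the $e^{ix\cdot\xi_-}$ contributions cancel exactly as in Theorem~\ref{th:outgoing_asymptotics}, while the $e^{ix\cdot\xi_+}$ contribution is
\begin{equation}
  (2\pi)^{-\frac{d+1}{2}}e^{-\frac{(d-1)\pi i}{4}}\|x\|^{-\frac{d-1}{2}}K(\xi_+)^{-1/2}\,
  \frac{2\pi i\,\tilde{Q}(\xi_+)\hat{Q}(\xi_+)}{\|\partial P/\partial\xi(\xi_+)\|}\,e^{ix\cdot\xi_+}
\end{equation}
modulo $O(\|x\|^{-1/2-d/2})$. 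Since $(2\pi)^{-(d+1)/2}\cdot 2\pi=(2\pi)^{-(d-1)/2}$, this is exactly the asymptotic expression stated in the theorem, with $K$ and $\xi_\pm$ as in Proposition~\ref{prop:asymptotics}.

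The one place that needs care is the bookkeeping in the two preceding paragraphs: verifying that inserting the smooth factors $\tilde{Q}$ and $\hat{Q}$ into the integrand of Proposition~\ref{prop:asymptotics} leaves every estimate and every use of the stationary- and non-stationary-phase lemmas intact, with $h_1$ still a function of $\xi'$ alone (because locally $P=(\xi_d-g(\xi'))\,r(\xi)$ with $r$ smooth and nonvanishing, so a smooth factor only multiplies $h_1(\xi')$ by its restriction to $Z_P$), and merely multiplies the two surviving critical-point contributions by $\tilde{Q}(\xi_\pm)\hat{Q}(\xi_\pm)$. There is no genuine analytic obstacle beyond what was already handled in the proofs of Proposition~\ref{prop:asymptotics} and Theorem~\ref{th:outgoing_asymptotics}; everything else is formal manipulation of the Fourier-domain identities.
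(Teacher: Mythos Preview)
Your argument is correct and is exactly the route the paper has in mind: the paper itself does not give a detailed proof but simply observes that $U(\xi)$ is the outgoing Fourier solution of Proposition~\ref{prop:Fourier_general_sol}/Theorem~\ref{th:outgoing_asymptotics} multiplied by the smooth factor $\tilde{Q}(\xi)\hat{Q}(\xi)$, and that the stationary-phase computation of Proposition~\ref{prop:asymptotics} then goes through verbatim with this extra smooth prefactor. You have supplied the bookkeeping the paper omits --- in particular the check that the outgoing condition, which by definition is imposed on $v$, forces $B(\xi)=\pi i\,\tilde{Q}(\xi)/\|\partial P/\partial\xi(\xi)\|$ and hence $V=\tilde{Q}\cdot(\text{outgoing Green's function of Theorem~\ref{th:outgoing_asymptotics}})$, and the observation that a smooth factor in front of $\PV\frac{1}{\xi_d-g(\xi')}$ only contributes its restriction to $Z_P$ to the leading stationary-phase term.
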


Summarizing our findings so far, the discrete solutions $u$ to
(\ref{eq:Qdelta_source}) and (\ref{eq:Qoperator_to_Qdelta_GF}) are
asymptotically equal to the solutions of the continuous Helmholtz
equation $Hu = \delta$ if the following two conditions are satisfied
\begin{enumerate}[(i)] 
\item \label{it:same_zero_set}
$P_1(\xi,k)$ and $H_1(\xi,k)$ have the same zero sets
\item \label{it:condition_tildeQ_hatQ}
$\hat{Q}_1$ and $\tilde{Q}_1$ satisfy
\begin{equation} \label{eq:QLQR_nablaP_over_nablaH}
 \tilde{Q}_1(\xi,k) \hat{Q}_1(\xi,k) 
  = \frac{\| \partial P_1 / \partial \xi (\xi,k)\|}
        {\| \partial H_1 / \partial \xi (\xi,k)\|}
\end{equation}
for all $(\xi,k)$ such that $P_1(\xi,k) = H_1(\xi,k) = 0$
\end{enumerate}

\section{Theory of discrete Helmholtz equations with variable $k$%
\label{sec:theory_variable}}

In this section we define a class of discrete approximations to the
Helmholtz operator with variable $k$, together with the associated
symbols. This is the topic of
subsection~\ref{eq:symbols_ops_variable_k}. We then study
ray-theoretic solutions to the equation
(\ref{eq:Greens_functions_general_spatial}) and to the set of
equations (\ref{eq:Qdelta_source}), (\ref{eq:Qoperator_to_Qdelta_GF}),
where $P$ and $Q$ are now variable coefficient operators.

We assume that $k = \frac{\omega}{c(x)}$, where $c$ is smooth and we
consider the limit $\omega \rightarrow \infty$. In the discrete case
we assume that $h \omega = \text{constant}$.  Ray-theoretic solutions
are then based on the ansatz
\begin{equation} \label{eq:ray_theory_ansatz}
  u(x,\omega) = A(x,\omega) e^{i \omega \Phi(x)} , 
\end{equation}
for some smoothly varying $A$ and $\Phi$.
For the continuous Helmholtz equation, such solutions are well
known and are constructed in two steps. First the ansatz
(\ref{eq:ray_theory_ansatz}) is inserted in the PDE, and an
expansion in $\omega$ is performed. Requiring that the highest
order terms vanish leads to the eikonal equation for $\Phi$ and the
transport equation for $A$. Secondly, initial/boundary conditions for these
equations are obtained from the asymptotic behavior of the constant
coefficient solutions. In this way, the solution modulo an error of
lower order in $\omega$ is obtained.

For our class of difference equations we follow the same program.
The constant coefficient solutions were already analyzed in
subsection~\ref{subsec:discrete_Greensfunction_asymptotics}.
In subsection~\ref{subsec:discrete_raytheory} we find a
nonlinear first order PDE for $\Phi$ and a transport equation for $A$.
Remarkably, we obtain the same equations in the continuous and
discrete case when formulated in terms of
the symbols (which are defined for both continuous and discrete
problems). See \cite{Lighthill1960} and
\cite{Duistermaat1996} for the continuous case and methods used in
that case as well as here.

In the last part of this section we consider
the ray-theoretic solutions to (\ref{eq:Qdelta_source}) and
(\ref{eq:Qoperator_to_Qdelta_GF}). The conditions 
(\ref{it:same_zero_set}) and
(\ref{it:condition_tildeQ_hatQ}) from
subsection~\ref{subsec:modified_Greens_function} 
for $P$ and $Q$ to obtain accurate solutions, 
need to be modified and extended to have the same ray-theoretic phase
and amplitude in the continuous and discrete case. The operator $P$
should be discretized using a symmetric discretization (with $=1/2$,
see below) and we should have $\tilde{Q} = \hat{Q}$. This is the topic
of subsection~\ref{subsec:amplitude_correction}.

\subsection{Symbols and operators for variable $k$\label{eq:symbols_ops_variable_k}}

In case $k$ depends on $x$, finite difference discretizations of the
Helmholtz operator may depend in different ways on the function
$k$. For example, the coefficients $p_{\alpha,\beta}$ may depend on
$k$ and its derivatives at $x = h \alpha$, but they may also depend on
$k$ at different points, for example on $k(h\alpha)$ and
$k(h\beta)$. We will consider a class of difference operators $P$,
where the matrix elements $p_{\alpha,\beta}$ depend only on the value
of $k$ at $(1-t)h\alpha + t h \beta$, where $t \in \{ 0 ,1/2, 1\}$
is a fixed constant.  In other words we consider operators $P$ with
matrix elements of the form
\begin{equation} \label{eq:define_P_variable_k}
  p_{\alpha,\beta} = \frac{1}{h^2} 
    f_{\alpha - \beta}(h k((1-t)\alpha h + t \beta h)) .
\end{equation}
Note that the operator is symmetric if $t = 1/2$ and $f_\gamma =
f_{- \gamma}$. This will turn out to be an appropriate choice for a 
discrete Helmholtz operator. We will assume that $k(x)$
is defined for all $x$, not only those in the grid.
Similar we assume that for $\tilde{Q}$ we have
\begin{equation} 
  \tilde{q}_{\alpha,\beta} = 
    \tilde{g}_{\alpha - \beta}(h k((1-t)\alpha h + t \beta h)) .
\end{equation}
and similar for $\hat{Q}$.

For such operators it is not obvious how to define the symbol. To find
an appropriate definition, we first consider how to define an operator
from a symbol $H(x,\xi)$ in the continuous case. This is the subject
of pseudodifferential operator theory, and can be done with the formula
\cite{AlinhacGerard2007,Hormander1985}
\begin{equation} \label{eq:quantization_continuous}
  \operatorname{Op}_t(H(x,\xi)) u 
  = (2\pi)^{-d} \iint H(x + t(y-x),\xi) e^{i(x-y) \cdot \xi}  u(y) \,
  d\xi \, d y .
\end{equation}
A map from a function $H(x,\xi)$ to an operator such as
$\operatorname{Op}_t(H(x,\xi))$ is called a
quantization. For $t = 0$, the previous formula is the standard 
left-quantization, $t=1$ is the right quantization and $t=1/2$ 
is the Weyl quantization. If $H(x,\xi) = \xi^2 - k(x)^2$, then
$\operatorname{Op}_t(H(x,\xi)) = -\Delta -k(x)^2$, independently of
which of these quantizations is used.

To obtain a symbol associated with the operator $P$ defined in 
(\ref{eq:define_P_variable_k}) we rewrite the expression for $Pu(x)$ 
as follows
\begin{equation} \label{eq:rewrite_Pvar_step1}
\begin{aligned}
  P u(x) = {}&
    h^{-2} \sum_\gamma f_\gamma(hk(x+t \gamma h)) u(x+h\gamma) 
\\
  = {}& h^{-2+d} \sum_\gamma \sum_{y \in (h\ZZ)^d}
    f_\gamma(hk(x + t(y-x))) \delta(x+h\gamma - y) u(y)  .
\end{aligned}
\end{equation}
Using the Fourier domain representation
$\delta(x) = (2\pi)^{-d} \int_{[-\pi/h,\pi/h]^d} e^{i x \cdot \xi}$
this can be rewritten as
\begin{equation}
  Pu(x)  = h^{-2+d} (2\pi)^{-d} \sum_{y \in (h\ZZ)^d} \int_{[-\pi/h,\pi/h]^d} \sum_\gamma 
    f_\gamma(hk(x+t(y-x)))
    e^{i (x+h\gamma -y )\cdot \xi}  \, d\xi .
\end{equation}
This can be written in similar form as
(\ref{eq:quantization_continuous}), namely as
\begin{equation}
  \operatorname{Op}_t(P(x,\xi)) u (x)
  \stackrel{\rm def}{=}
  (2\pi)^{-d} \sum_{y \in (h\ZZ)^d} \int_{[-\pi/h,\pi/h]^d} 
   P(x+t(y-x),\xi) e^{i (x-y)\cdot \xi} u(y) \, d\xi ,
\end{equation}
where 
\begin{equation} \label{eq:symbol_discrete_variable_k}
  P(x,\xi) = h^{-2} \sum_\gamma f_\gamma(h k(x)) e^{i h\gamma \cdot \xi} .
\end{equation}
Thus, associated with $P$ defined in
(\ref{eq:define_P_variable_k}) is associated the symbol $P(x,\xi)$
given in (\ref{eq:symbol_discrete_variable_k}). The parameter $t$
corresponds to the type of quantization, left, right or Weyl quantization.

With these definitions, the symbol
(\ref{eq:symbol_discrete_variable_k}) 
for variable coefficients
may also be expressed entirely
in terms of $P_1$
\begin{equation}
  P(x,\xi) = \frac{1}{h^2} P_1 (h\xi ; hk(x)) .
\end{equation}
A symbol $P(x,\xi)$ is called Helmholtz like if it satisfies the
definition for each fixed $x$.

\subsection{Ray-theoretic equations for amplitude and phase%
\label{subsec:discrete_raytheory}}

In this section we consider the high-frequency limit $\omega
\rightarrow \infty$. We assume that $\omega h = \text{constant}$,
and recall that $k(x) = \frac{\omega}{c(x)}$, where $c(x)$ is $C^\infty$.
The operator $P$ and the symbol $P(x,\xi)$ become $\omega$-dependent.
By $\tilde{P}(x,\xi)$ we denote the symbol for $\omega = 1$.
\begin{equation}
  \tilde{P}(x,\xi) = 
  \frac{1}{(\omega h)^2} P_1( h\omega \xi, \frac{h
    \omega}{c(x)} ) .
\end{equation}
For other values of $\omega$ we find that
\begin{equation} \label{eq:scaling_P}
  P(x,\xi ; \omega)
  = \omega^2 \tilde{P}(x, \frac{\xi}{\omega})
\end{equation}

We consider the action of $P$ on functions of the form
\begin{equation}
  u(x) = e^{i \omega \Phi(x)} A(x)
\end{equation}
where $\Phi$ and $A$ are $C^\infty$ functions. 
From the symbol $P(x,\xi;\omega)$ and the phase function 
$\Phi$ one can derive naturally a
vector field, which we call $L_{P,\Phi,\omega}$
(cf.\ \cite[section 4.3]{Duistermaat1996})
\begin{equation}
  \big( L_{P,\Phi,\omega} \big)_j = \pdpd{P}{\xi_j}(x,\omega\nabla \Phi;\omega) .
\end{equation}
This vector field is determined by 
$L_{\tilde{P},\Phi} = \pdpd{\tilde{P}}{\xi}(x,\nabla \Phi)$ as follows
\begin{equation} \label{eq:scaling_L}
  L_{P,\Phi,\omega}(x) 
    = \omega L_{\tilde{P},\Phi}(x) .
\end{equation}

\begin{proposition} \label{prop:discrete_Duis4_3_2}
We have 
\begin{equation} \label{eq:WKB_like_discrete}
\begin{aligned}
  e^{ - i \omega \Phi(x)}
    P (e^{i\omega \Phi(x)} A( x))
  = & \omega^2 \tilde{P}(x,\nabla \Phi(x)) A(x) 
\\
  & + \omega \frac{1}{i} \bigg(
    \sum_j (L_{\tilde{P},\Phi})_j  \pdpd{A}{x_j} 
    + \frac{1}{2} ( \operatorname{div} L_{\tilde{P},\Phi}) A
    + (t-1/2) \sum_j \pdpd{^2 \tilde{P}}{x_j \partial \xi_j} A \bigg)
\\
  & + O(1)  , \qquad \omega \rightarrow \infty .
\end{aligned}
\end{equation}
\end{proposition}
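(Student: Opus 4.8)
The plan is to compute $P(e^{i\omega\Phi}A)$ directly from the quantization formula for $\operatorname{Op}_t(P(x,\xi;\omega))$ and extract the asymptotic expansion in powers of $\omega$ by stationary phase / Taylor expansion. First I would insert $u(y) = e^{i\omega\Phi(y)}A(y)$ into
\begin{equation*}
  P u(x) = (2\pi)^{-d} \sum_{y\in(h\ZZ)^d}\int_{[-\pi/h,\pi/h]^d}
    P(x+t(y-x),\xi)\,e^{i(x-y)\cdot\xi}\,A(y)\,e^{i\omega\Phi(y)}\,d\xi,
\end{equation*}
and then multiply by $e^{-i\omega\Phi(x)}$. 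Using the scaling relation (\ref{eq:scaling_P}), $P(z,\xi;\omega) = \omega^2\tilde P(z,\xi/\omega)$, I would substitute $\xi = \omega\zeta$, absorbing the Jacobian, to put the integral in the standard large-parameter form $\iint a(z,\zeta)\,e^{i\omega\psi}\,d\zeta\,dy$ with phase $\psi = (x-y)\cdot\zeta + \Phi(y) - \Phi(x)$. A technical point to handle up front: because $y$ ranges over the lattice rather than all of $\RR^d$, I would first replace the lattice sum by an integral over $\RR^d$ using a smooth periodic partition of unity (as in the proof of Proposition~\ref{prop:asymptotics}), the correction being $O(\omega^{-N})$ for all $N$ by non-stationary phase since $\Phi, A$ are smooth and the symbol is a finite trigonometric sum.

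Next I would apply the stationary phase lemma in the $2d$ variables $(y,\zeta)$. The stationarity conditions $\partial_\zeta\psi = 0$ and $\partial_y\psi = 0$ give $x = y$ and $\zeta = \nabla\Phi(y) = \nabla\Phi(x)$, so the critical point is $(y,\zeta) = (x,\nabla\Phi(x))$, which is non-degenerate with a Hessian of the universal form $\begin{pmatrix} \nabla^2\Phi & -I \\ -I & 0\end{pmatrix}$, giving signature $0$ and $|\det| = 1$ — hence no stationary-phase prefactor or phase shift, just the Hessian-derivative corrections. The leading term of the expansion reproduces $\omega^2\tilde P(x,\nabla\Phi(x))A(x)$. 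The subleading term, of order $\omega^1$, comes from the standard second-order stationary phase correction $\frac{1}{i}\,\frac{1}{2}\,\langle (\psi'')^{-1}\partial, \partial\rangle$ applied to the amplitude $a(y,\zeta) = \tilde P(x+t(y-x),\zeta)A(y)$ evaluated at the critical point. I would carry out this contraction: the mixed $\partial_y\partial_\zeta$ block produces the transport term $\sum_j(L_{\tilde P,\Phi})_j\,\partial A/\partial x_j$ together with half the divergence term $\frac12(\operatorname{div}L_{\tilde P,\Phi})A$ (the divergence appearing because differentiating $\partial_\zeta\tilde P$ in $y$ along the critical manifold picks up both the explicit $t(y-x)$ dependence and the chain-rule term through $\zeta = \nabla\Phi$), while the $t$-dependent explicit $x$-dependence of the symbol contributes the extra $(t-1/2)\sum_j \partial^2\tilde P/\partial x_j\partial\xi_j\,A$ term — the asymmetry in $t$ surviving precisely because the Weyl point $t=1/2$ is where the symbol is evaluated symmetrically between $x$ and $y$.

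The main obstacle I expect is bookkeeping in the subleading term: correctly identifying which pieces of the $(\psi'')^{-1}$-contraction land in $L_{\tilde P,\Phi}\cdot\nabla A$, which in $\frac12\operatorname{div}L_{\tilde P,\Phi}$, and which in the $(t-1/2)$ correction, since these all arise from the same second-derivative expression and must be reorganized using $\zeta = \nabla\Phi$ and the chain rule $\frac{d}{dy_i}\big[\partial_{\zeta_j}\tilde P(x+t(y-x),\nabla\Phi(y))\big] = t\,\partial_{x_i}\partial_{\zeta_j}\tilde P + \sum_l \partial_{\zeta_l}\partial_{\zeta_j}\tilde P\,\partial_i\partial_l\Phi$. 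One checks that $\operatorname{div}L_{\tilde P,\Phi} = \sum_{i,j}\big(\partial_{x_i}\partial_{\xi_i}\tilde P + \partial_{\xi_i}\partial_{\xi_j}\tilde P\,\partial_i\partial_j\Phi\big)$, so symmetrizing the mixed block around $t=1/2$ splits the $x$-derivative of $\tilde P$ into $\frac12\operatorname{div}L$ plus the $(t-1/2)$ remainder exactly as stated. The $O(1)$ remainder follows from the error estimate in the stationary phase lemma, all higher terms being bounded since $\Phi$ and $A$ are $C^\infty$ with the critical point varying smoothly in $x$.
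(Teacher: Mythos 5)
Your route is genuinely different from the paper's, and it is worth contrasting the two. The paper's proof is elementary: since $P$ is a finite difference operator, $(Pu)(x)=\sum_\gamma h^{-2}f_\gamma\bigl(hk(x+th\gamma)\bigr)u(x+h\gamma)$ is a finite sum over the stencil, and because $\omega h$ is held constant the offsets $h\gamma$ are $O(1/\omega)$; one Taylor-expands $\Phi$ to second order, $A$ to first order and the coefficients $f_\gamma$ to first order about $x$, and recognizes the resulting finite sums as $P$, $\pdpd{P}{\xi_j}$, $\pdpd{^2P}{x_j\partial\xi_j}$ and $\pdpd{^2P}{\xi_j\partial\xi_k}$ evaluated at $\xi=\omega\nabla\Phi(x)$, then rescales via (\ref{eq:scaling_P}) and (\ref{eq:scaling_L}). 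No oscillatory integrals are needed, and the statement holds for arbitrary smooth $\Phi$ because only the (periodic) symbol enters. Your stationary-phase route is the classical proof of the continuous analogue, and your bookkeeping at the critical point is essentially right: the Hessian in $(y,\zeta)$ has unit determinant and signature zero, and the contraction with its inverse, applied to the amplitude $\tilde P(x+t(y-x),\zeta)A(y)$, does reorganize (using the chain rule through $\zeta=\nabla\Phi$) into $\sum_j(L_{\tilde P,\Phi})_j\pdpd{A}{x_j}+\tfrac12(\operatorname{div}L_{\tilde P,\Phi})A+(t-\tfrac12)\sum_j\pdpd{^2\tilde P}{x_j\partial\xi_j}A$. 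One point you should verify explicitly: the general second-order stationary phase correction also contains terms built from third derivatives of the phase; here these involve only $y$-derivatives of $\Phi$, while each factor of the inverse-Hessian contraction supplies at most one $y$-derivative, so they vanish at the critical point — fortunate, but it needs to be said.

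The genuine gap is the lattice-to-continuum step. The partition-of-unity trick you cite from the proof of Proposition~\ref{prop:asymptotics} unfolds an integral over the dual torus, exploiting the periodicity of $e^{ix\cdot\xi}$ for $x$ on the lattice; it does not convert the spatial sum $h^d\sum_{y\in(h\ZZ)^d}$ into $\int_{\RR^d}dy$. The correct tool is Poisson summation, and the error terms are not automatically $O(\omega^{-N})$: the aliased modes carry extra phases $e^{-2\pi i m\cdot y/h}$ with $2\pi/h$ of size comparable to $\omega$, and they are non-stationary only if $\omega\nabla\Phi(x)$ lies strictly inside the Brillouin zone $]-\pi/h,\pi/h[^{d}$ (for the eikonal solutions of interest this holds because $\omega\nabla\Phi\in Z_P$, but the proposition is stated for general smooth $\Phi$, where an aliased mode can itself be stationary and must be absorbed using the periodicity of the symbol). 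In addition, once $y$ is continuous the integrand is no longer periodic in $\zeta$, so repeated integration by parts in $\zeta$ produces boundary terms on the cube that have to be controlled, and $A$ is not compactly supported, so the $y$-integration must first be localized near $x$ before non-stationary phase can be invoked. None of this is fatal — a discrete stationary phase argument can be completed — but as written the reduction is unjustified, and it is exactly the difficulty the paper's direct Taylor-expansion proof sidesteps.
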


\begin{proof}
The proof uses a Taylor expansion of the phase function to second order
\begin{equation}
  \Phi(x+y)
    = \Phi(x)
      + \nabla \Phi(x) \cdot y
      + \frac{1}{2} \sum_{j,k} \pdpd{^2 \Phi}{x_j x_k} y_j y_k 
      + O(\|y\|^3) ,
\end{equation}
a Taylor expansion of the amplitude to first order 
\begin{equation}
  A(x+y) = A(x)
    + \nabla A(x) \cdot y + O(\|y\|^2))
\end{equation}
and a Taylor expansion of the matrix coefficients to first order
\begin{equation}
  p_{\alpha,\beta} 
  = \frac{1}{h^2} f_{\alpha-\beta}( hk(h\alpha) ) 
    + \frac{t}{h^2} f_{\alpha-\beta}' ( hk(h\alpha) ) 
      h \nabla k (h \alpha) \cdot h(\beta-\alpha) .
\end{equation}
The exponent $e^{i \omega \Phi(x+y)}$ is then written
as a product of three factors
\begin{equation}
  e^{i \omega \Phi(x+y)}
  = 
  e^{i \omega \Phi(x)}
  e^{i \omega \nabla \Phi(x) \cdot y}
\left( 1 + \frac{1}{2} i \omega \sum_{j,k} \pdpd{^2 \Phi}{x_j x_k}
  y_j y_k + O(\omega \|y\|^3) \right) .
\end{equation}
These expansions are inserted in the sum
\begin{equation}
  (P u)(x) 
  = \sum_\gamma \frac{1}{h^2} f_\gamma(h k( x + th\gamma) )
  u(x +h\gamma ) . 
\end{equation}
The factor $ e^{i \omega \Phi(x)}$ can be
put in front of the expression outside the summation
\begin{equation}
\begin{aligned}
  (P u)( x ) = & 
  e^{ i \omega \Phi(x)} \frac{1}{h^2} \bigg[ 
    \sum_{\gamma}
    e^{i \omega \nabla \Phi(x) \cdot h\gamma}
    f_\gamma( hk(x)) A(x) 
\\
  & + \bigg(
    \sum_{\gamma} e^{i \omega \nabla \Phi(x) \cdot h\gamma}
    f_\gamma( hk(x)) \nabla A(x)  \cdot (h\gamma)
\\
  & \qquad + \frac{1}{2}  i \omega  A
    \sum_\gamma e^{i \omega \nabla \Phi(x) \cdot h\gamma}  
    \sum_{j,k} \pdpd{^2 \Phi}{x_j \partial x_k} (h\gamma)_j
        (h\gamma)_k f_\gamma( hk(x)) 
\\ & \qquad
    + A \sum_\gamma t e^{i \omega \nabla \Phi(x) \cdot h\gamma}
    f_\gamma'  h\nabla k  \cdot (h \gamma) \bigg)
\\
  & + O(h^2)  \bigg] 
\end{aligned}
\end{equation}
We next use the expression for the symbol
$P(x,\xi) = \frac{1}{h^2} \sum_\gamma e^{i h \gamma \cdot \xi} f_\gamma(hk(x))$,
the following expressions for the derivatives of $P(x,\xi)$
\begin{align}
  \pdpd{P}{\xi_j} = {}& \frac{i}{h^2} 
    \sum_\gamma h \gamma_j e^{i h\gamma \cdot \xi} f_\gamma(hk(x))
\\
  \pdpd{^2 P}{x_j \partial \xi_k} = {}&
    \frac{i}{h^2} \sum_\gamma (x_\gamma)_k e^{i x_\gamma \cdot \xi}
    f_\gamma'(k(x)) h \pdpd{k}{x_j}
\\
  \pdpd{^2 P}{\xi_j \partial \xi_k}  = {}&
  - \frac{1}{h^2} \sum_\gamma ( h\gamma)_j (h \gamma)_k e^{i h \gamma \cdot \xi}
    f_\gamma(hk(x)) .
\end{align}
and an expression for the derivatives of $L_{P,\Phi,\omega}$
\begin{equation}
  \pdpd{(L_{P,\Phi,\omega})_j}{x_k}
  = \pdpd{^2 P}{x_k \partial \xi_j}(x,\omega \nabla \Phi)
    + \omega \sum_l \pdpd{^2 P}{\xi_j \partial \xi_l} \pdpd{^2
      \Phi}{x_l \partial x_k} .
\end{equation}
This yields
\begin{equation}
\begin{aligned}
  (P u)( x ) = & 
  e^{ i \omega \Phi(x)} \bigg[ P(x,\omega\nabla\Phi(x);\omega) A(x) 
\\
  & + \frac{1}{i} \bigg( \sum_{j=1}^n (L_{P,\Phi,\omega})_j
  \pdpd{A}{x_j}
    + \frac{1}{2} \operatorname{div} L_{P,\Phi,\omega} A
    + (t-\frac{1}{2}) \sum_j \pdpd{^2 P}{x_j \xi_j} A  \bigg)
\\
  & + O(1)  \bigg] , \qquad \omega \rightarrow \infty .
\end{aligned}
\end{equation}
Using equations (\ref{eq:scaling_P}) and (\ref{eq:scaling_L}) the
result follows.
\end{proof}

The result is similar to the result in Proposition 4.3.2
of \cite{Duistermaat1996}. To find $A$ and $\Phi$ such 
that
\begin{equation}
  P(e^{i \omega \Phi(x)} A(x)) \approx 0
\end{equation}
the phase function $\Phi$ must satisfy the equation
\begin{equation}
  \tilde{P}(x,\nabla \Phi) = 0 ,
\end{equation}
which is a nonlinear first order equation like an eikonal equation,
and the amplitude must satisfy a transport equation
\begin{equation}
  \sum_j (L_{\tilde{P},\Phi})_j  \pdpd{A}{x_j} 
    + \frac{1}{2} ( \operatorname{div} L_{\tilde{P},\Phi}) A
    + (t-1/2) \sum_j \pdpd{^2 \tilde{P}}{x_j \partial \xi_j} A  = 0
\end{equation}
For $t = 1/2$, this equation conserves $|A|^2$.

Ray theoretic solutions to equation
(\ref{eq:Greens_functions_general_spatial}) can now be constructed
just as in the continuous case. By a rescaling,
theorem~\ref{th:outgoing_asymptotics} can be used to obtain the
asymptotics of a solution for $x \neq 0$ and $\omega \rightarrow
\infty$. The amplitude and phase from formula
(\ref{eq:asym_outgoing_solution}) can hence be used as
initial/boundary values for the eikonal equation for $\Phi$ and the
transport equation for $A$, and these $\Phi$ and $A$ can be determined
from these equations, where we note that the eikonal equation may not
have globally defined solutions, just as in the continuous case.

We briefly recall the continuous equivalent of
Proposition~\ref{prop:discrete_Duis4_3_2}.  The following is basically
a reformulation of proposition 4.3.2 of \cite{Duistermaat1996} and can
be proven using the method of stationary phase found in the same text.

\begin{proposition}
Let $H$ be a continuous Helmholtz like symbol.
For the action of $\operatorname{Op}(H)$ on 
$e^{i \omega \Phi} A(x)$ we have the
asymptotic development
\begin{equation} \label{eq:WKB_like_continuous}
\begin{aligned}
  e^{-i \omega \Phi(x)}
  \operatorname{Op}_t (H) (e^{i \omega \Phi(x)} A(x))
  = & \omega^2 \tilde{H}(x,\nabla \Phi(x)) A(x) 
\\
  & + \omega \frac{1}{i} \bigg(
    \sum_j (L_{\tilde{H},\Phi})_j  \pdpd{A}{x_j} 
    + \frac{1}{2} ( \operatorname{div} L_{\tilde{H},\Phi}) A
    + (t-1/2) \sum_j \pdpd{^2 \tilde{H}}{x_j \partial \xi_j} A \bigg)
\\
  & + O(1)  \bigg] , \qquad \omega \rightarrow \infty .
\end{aligned}
\end{equation}
\end{proposition}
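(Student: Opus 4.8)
The plan is to mirror, in the continuous pseudodifferential setting, the computation already carried out for the difference operator in Proposition~\ref{prop:discrete_Duis4_3_2}, the only real difference being that sums over the stencil $\gamma$ are replaced by the oscillatory integral defining $\operatorname{Op}_t(H)$. Concretely, I would start from
\begin{equation*}
  \operatorname{Op}_t(H)(e^{i\omega\Phi}A)(x)
  = (2\pi)^{-d}\iint H(x+t(y-x),\xi)\,e^{i(x-y)\cdot\xi}\,e^{i\omega\Phi(y)}A(y)\,d\xi\,dy ,
\end{equation*}
substitute $y = x+z$, and factor out $e^{i\omega\Phi(x)}$ after Taylor expanding $\Phi(x+z) = \Phi(x)+\nabla\Phi(x)\cdot z + \tfrac12\sum_{j,k}\partial^2_{x_jx_k}\Phi(x)z_jz_k + O(\|z\|^3)$ and $A(x+z)=A(x)+\nabla A(x)\cdot z + O(\|z\|^2)$, exactly as on the discrete side. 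The phase in the remaining integral is $(x-y)\cdot\xi + \omega\nabla\Phi(x)\cdot z = z\cdot(\omega\nabla\Phi(x)-\xi)$ plus the quadratic correction from $\Phi$; the stationary point is $\xi = \omega\nabla\Phi(x)$, which is where $H$ and its derivatives get evaluated.

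Next I would apply the method of stationary phase to this integral in the joint variables $(z,\xi)$, as in \cite{Duistermaat1996}. The leading term reproduces $\omega^2\tilde H(x,\nabla\Phi)A$ after using the scaling $H(x,\xi;\omega)=\omega^2\tilde H(x,\xi/\omega)$ analogous to (\ref{eq:scaling_P}). The next order picks up three contributions: one from $\nabla A\cdot z$ paired against $\partial_\xi H$, giving the transport term $\sum_j (L_{\tilde H,\Phi})_j\,\partial A/\partial x_j$; one from the quadratic phase correction $\tfrac12 i\omega\sum_{j,k}\partial^2_{x_jx_k}\Phi\,z_jz_k$ paired against $\partial^2_{\xi}H$, which together with the Hessian-of-phase factor from stationary phase assembles into $\tfrac12(\operatorname{div}L_{\tilde H,\Phi})A$ via the identity $\partial_{x_k}(L_{P,\Phi,\omega})_j = \partial^2_{x_k\xi_j}H + \omega\sum_l\partial^2_{\xi_j\xi_l}H\,\partial^2_{x_l x_k}\Phi$ already recorded above; and one from the $t$-shift in the first argument of $H$, i.e.\ $H(x+tz,\xi)=H(x,\xi)+tz\cdot\nabla_x H+\cdots$, which contributes $(t-\tfrac12)\sum_j \partial^2_{x_j\xi_j}\tilde H\,A$ after the symmetric part $\tfrac12$ is absorbed into the divergence term. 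Collecting these and rescaling back by (\ref{eq:scaling_P})--(\ref{eq:scaling_L}) yields (\ref{eq:WKB_like_continuous}).

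The main obstacle is purely bookkeeping: keeping track of the Fresnel/stationary-phase normalization factors (the $e^{\pm i\pi\sigma/4}$ and $|\det|^{-1/2}$ from the $\xi$-integration against the quadratic phase in $z$) and verifying that the cross terms combine precisely into $\tfrac12\operatorname{div}L$ rather than some other combination of $\partial^2_{\xi\xi}H\cdot\partial^2_{xx}\Phi$ and $\partial^2_{x\xi}H$; the $(t-\tfrac12)$ coefficient in particular is exactly the obstruction to the symmetric ($t=1/2$, Weyl) case being the "clean" one, and must be tracked with care. Since all of this is standard and is done in detail in \cite[Prop.~4.3.2]{Duistermaat1996}, and since the structure is identical to the discrete computation just given, I would simply note that the argument is the word-for-word analogue, with $\sum_\gamma f_\gamma(hk(x))e^{ih\gamma\cdot\xi}$ replaced by the symbol $H(x,\xi)$ and the discrete stationary-phase sum replaced by its continuous counterpart, and omit the repetition.
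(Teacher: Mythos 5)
Your proposal is correct and follows essentially the same route as the paper, which proves this proposition exactly as you do: by invoking the stationary-phase argument of Duistermaat (Proposition 4.3.2 of \cite{Duistermaat1996}) applied to the oscillatory integral defining $\operatorname{Op}_t(H)(e^{i\omega\Phi}A)$, with the same Taylor expansions and the same assembly of the subleading terms into $\tfrac12\operatorname{div}L_{\tilde H,\Phi}$ plus the $(t-\tfrac12)\sum_j\partial^2_{x_j\xi_j}\tilde H$ correction. The paper in fact omits the details entirely and simply cites that reference, so your slightly more explicit sketch is, if anything, fuller than the paper's own treatment.
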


\subsection{Amplitude correction\label{subsec:amplitude_correction}}

In this section we consider ray-theoretic approximations $v = e^{i
  \omega \Psi} B$ and $u = e^{i \omega \Phi} A$ for the solutions $v$
and $u$ to (\ref{eq:Qdelta_source}) and (\ref{eq:Qoperator_to_Qdelta_GF}).
Assume we have reference ray-theoretic solutions
$u_{\rm ref} = e^{i \omega \Phi_{\rm ref}(x)} A_{\rm ref}(x)$ 
associated with Helmholtz equation $H u_{\rm ref} = \delta$ where
$H(\omega) = -\Delta - \frac{\omega^2}{c(x)^2}$, i.e.\ 
$\Phi_{\rm ref}$ satisfies the eikonal equation and $A_{\rm ref}$
the transport equation with appropriate initial conditions.
In the following we let $H_1(\xi,k) = \xi^2 - k^2$, 
$\tilde{H}(x,\xi) = \xi^2 - \frac{1}{c(x)^2}$.

Assume that
\begin{enumerate}[(i)]
\item
$P_1(\xi,k)$ and $H_1(\xi,k)$ have the same zero sets
\item
$\tilde{Q}_1$ and $\hat{Q}_1$ are identical and 
$Q_1(\xi,k) = \tilde{Q}_1(\xi,k) = \hat{Q}_1(\xi,k)$ satisfies
\begin{equation}
  Q_1(\xi,k)^2 = 
    \frac{\| \partial P_1 / \partial \xi (\xi,k)\|}
        {\| \partial H_1 / \partial \xi (\xi,k)\|}
\end{equation}
for all $(\xi,k)$ such that $P_1(\xi,k) = H_1(\xi,k) = 0$;
\item 
$P$ and $H$ are derived from their respective symbols using $t
= 1/2$ quantization .
\end{enumerate}
{\em We argue that in this case, to highest order $u$ has the same
ray-theoretic approximation as $u_{\rm ref}$. } We omit a formal proof,
because the arguments are similar as those used above.

The construction of the phase and amplitude functions $\Psi$ and $B$
proceeds almost in the same way as for solutions to
(\ref{eq:Greens_functions_general_spatial}). Eikonal and transport
equations are as follows from 
Proposition~\ref{prop:discrete_Duis4_3_2}. The constant coefficient
solutions differ by a factor $Q(\xi_+)$ and have the same phase,
resulting in different initial/boundary conditions, such that on a
small sphere $\Gamma$ around 0, where we impose the initial/boundary
conditions for the eikonal and transport equations, we have
\begin{equation} \label{eq:initial_conditions_Psi_B}
\begin{aligned} 
  \Psi(x) = {}& \Phi_{\rm ref}(x)
\\
  B(x) = {}& Q_{\omega=1}(x,\nabla \Phi_{\rm ref}) \,  A_{\rm ref}(x) 
\end{aligned}
\end{equation}
As a result, $\Psi(x) = \Phi_{\rm ref}(x)$ everywhere.
While we have different transport equations the operators $L_{\tilde{H},\Phi_{\rm ref}}$
and $L_{\tilde{P},\Phi_{\rm ref}}$ are scaled versions of each other
\begin{equation}
  L_{\tilde{P},\Phi_{\rm ref}} = Q_{\omega=1} (x,\nabla \Phi_{\rm ref})^2 
    L_{\tilde{H},\Phi_{\rm ref},1}
\end{equation}
It follows from this fact and the transport equation for $A_{\rm
  ref}$, that 
\begin{equation}
  \sum_j L_{\tilde{P},\Phi_{\rm ref}} \pdpd{Q_{\omega=1}^{-1} A_{\rm ref}}{x_j} 
  + (\operatorname{div} L_{\tilde{P},\Phi_{\rm ref}}) Q_{\omega=1}^{-1} A_{\rm ref} = 0 .
\end{equation}
This and (\ref{eq:initial_conditions_Psi_B}) shows that
\begin{equation}
  B(x) = Q^{-1}(x,\nabla \Phi_{\rm ref}) \,  A_{\rm ref}(x) 
\end{equation}
everywhere.

The function $u$ is given by applying $Q$ to $v$.
The action of $Q$ on $e^{i\omega \Psi} B$ is to highest order equal to
a multiplication by $Q(x,\omega \nabla \Psi ; \omega)$, so that 
\begin{equation}
\begin{aligned}
  \Phi = {}& \Psi = \Phi_{\rm ref} \qquad \text{and}
\\
  A(x) = {}& \tilde{Q}(x,\nabla \Phi_{\rm ref}) B(x) = A_{\rm ref}(x) ,
\end{aligned}
\end{equation}
concluding the argument.

\section{Phase slowness errors for existing discretizations}
\label{sec:phase_slowness_errors_existing}

In this section we will describe three types of discretizations of the
Helmholtz equation (\ref{eq:Helmholtz_equation}), namely standard
finite differences, compact finite differences and Lagrange
finite elements on regular meshes. We then compute phase slowness
errors to compare the performance of the different methods in this
respect, and to obtain reference values for our new method constructed
below.

We modify the notation compared to the previous two section.  In this
section the degrees of freedom for all three types of methods are
denoted by $u_{j,k,l}$ (in three dimensions) and associated with a
regular mesh with grid spacing $h$. For finite element methods of
order $N$ the cells are of size $N h$, and cell boundaries are located
at $j,k,l \equiv 0 \mod N$. Occasionally we will use $d = 2$ or $3$ to
denote the dimension of space.

\subsection{Standard finite differences}
In a standard finite difference discretization of the operator
$-\Delta - k^2$ each of the one-dimensional second derivatives in the
Laplacian $\Delta = \pdpd{^2}{x_1^2} + \pdpd{^2}{x_2^2} +
\pdpd{^2}{x_3^2}$ is approximated by a central difference
approximation of the given order. These are given by
\begin{equation}
  D_2^{(N)} u_l = h^{-2} \sum_{m=-N/2}^{N/2} c^{(N)}_m u_{l+m}
\end{equation}
where the $c_m^{(N)}$ are as in the following table for $N = 2,4,6,8$
\[
\begin{tabular}{r|ccccccccc} 
& $m\!=\!-4$ & -3 & -2 & -1 & 0 & 1 & 2 & 3 & 4 \\ \hline
$N=2$ &
& & & 1 & -2 & 1 & & & 
\\[0.7ex]
4 & 
& & $-\frac{1}{12}$ & $\frac{4}{3}$ & $-\frac{5}{2}$  & $\frac{4}{3}$ & $-\frac{1}{12}$ & & 
\\[0.7ex]
6 &
& $\frac{1}{90}$ & $-\frac{3}{20}$ & $\frac{3}{2}$ &
$-\frac{49}{18}$  & $\frac{3}{2}$ & $-\frac{3}{20}$ & $\frac{1}{90}$ &
\\[0.7ex]
8 & 
$-\frac{1}{560}$ & $\frac{8}{315}$ & $-\frac{1}{5}$  & $\frac{8}{5}$ &
$-\frac{205}{72}$ & $\frac{8}{5}$ & $-\frac{1}{5}$  & $\frac{8}{315}$ &
$-\frac{1}{560}$ 
\end{tabular}
\]
The discrete approximation to the term $-k(x)^2 u$ in
(\ref{eq:Helmholtz_equation}) is simply given by $-k_{l,m,n}^2
u_{l,m,n}$. The two-dimensional case can be done similarly.

\subsection{Compact finite difference discretizations\label{subsec:compact_fd}}
For constant $k$, compact finite difference discretizations
take the form
\begin{equation} \label{eq:matrix_compact_3D}
  (A u)_{l,m,n} = \sum_{(p,q,r) \in \{-1,0,1\}^3} a_{p,q,r} u_{l+p,m+q,n+r} .
\end{equation}
Because of symmetry, there are four different coefficients $A_j$,
$j=0,1,2,3$ and
\begin{equation} \label{eq:coefficients_A_j}
  a_{p,q,r} = A_{|p| + |q| + |r|} , \qquad (p,q,r) \in \{-1,0,1\}^3
\end{equation}
In 2-D we have
\begin{equation}
  (A u)_{l,m} = \sum_{(p,q) \in \{-1,0,1\}^2} a_{p,q} u_{l+p,m+q} .
\end{equation}
and there are three different coefficients $A_j$, $j=0,1,2$ and
\begin{equation}
  a_{p,q} = A_{|p| + |q|}  , \qquad (p,q) \in \{-1,0,1\}^2.
\end{equation}

The choice of coefficients is done in different ways in 
\cite{BabuskaEtAl1995,HarariTurkel1995,JoShinSuh1996,OpertoEtAl2007,
  Sutmann2007,ChenChengWu2012,TurkelEtAl2013,StolkEtAl2014}.
The QS-FEM method \cite{BabuskaEtAl1995} is a two-dimensional
method, for which the coefficients are given modulo an overall
normalization by
\begin{equation}
\begin{aligned}
  A_0^{\rm QS-FEM} = {}& 4
\\
  A_1^{\rm QS-FEM} = {}& 2 \frac{c_1(\alpha) s_1(\alpha) - c_2(\alpha) s_2(\alpha)}
                {c_2(\alpha) s_2(\alpha) (c_1(\alpha)+s_1(\alpha))
                - c_1(\alpha) s_1(\alpha) (c_2(\alpha)+s_2(\alpha))}
\\
  A_2^{\rm QS-FEM} = {}& \frac{c_2(\alpha)+s_2(\alpha) - c_1(\alpha)-s_1(\alpha)}
                {c_2(\alpha) s_2(\alpha) (c_1(\alpha)+s_1(\alpha))
                - c_1(\alpha) s_1(\alpha) (c_2(\alpha)+s_2(\alpha))} 
\end{aligned}   
\end{equation}       
where $\alpha = kh$ and the auxiliary functions $c_1,s_1,c_2,s_2$ are
defined by
\begin{equation}
\begin{aligned}
  c_1(\alpha) = {}& \cos \left(\alpha \cos \frac{\pi}{16} \right)
& 
  s_1(\alpha) = {}& \cos \left(\alpha \sin \frac{\pi}{16} \right)
\\
  c_2(\alpha) = {}& \cos \left(\alpha \cos \frac{3 \pi}{16} \right)
& 
  s_2(\alpha) = {}& \cos \left(\alpha \sin \frac{3 \pi}{16} \right) .
\end{aligned}
\end{equation}
We will not discuss the fourth order method of \cite{HarariTurkel1995}
because it contains still a free parameter and one of the authors has
later published a sixth order method in \cite{TurkelEtAl2013}. In the
latter method variations of $k$ are taken into account. In case of
constant $k$, the coefficients are given in three dimensions by
\begin{equation}
\begin{aligned}
  A_0^{\rm CHO6} = {}& +\frac{64}{15} - \frac{14 k^2 h^2}{15}
                + \frac{k^4 h^4}{20} 
\\
  A_1^{\rm CHO6} = {}& -\frac{7}{15} + \frac{k^2 h^2}{90}
\qquad
  A_2^{\rm CHO6} = -\frac{1}{10} - \frac{k^2 h^2}{90}
\qquad
  A_3^{\rm CHO6} = -\frac{1}{30}
\end{aligned}
\end{equation}
and in two dimensions by
\begin{equation}
\begin{aligned}
  A_0^{\rm CHO6} = {}& \frac{10}{3} - \frac{41 k^2 h^2}{45} + \frac{k^4 h^4}{20}
\\
  A_1^{\rm CHO6} = {}& -\frac{1}{6} - \frac{k^2 h^2}{90} 
\qquad
  A_2^{\rm CHO6} = - \frac{2}{3} - \frac{k^2 h^2}{90} ,
\end{aligned}
\end{equation}
again modulo an overall constant. For the method of Sutmann
\cite{Sutmann2007} we have (this method is only for 3-D)
\begin{equation}
\begin{aligned}
  A_0^{\rm SUT} = {}& \frac{64}{15} 
    \left( 1 - \tfrac{1}{4} k^2 h^2 + \tfrac{5}{256} k^4 h^4 
        - \tfrac{1}{1536} k^6 h^6 \right) 
\\
  A_1^{\rm SUT} = {}& -\frac{7}{15} 
    \left( 1 - \tfrac{1}{21} k^2 h^2 \right) 
\qquad
  A_2^{\rm SUT} = -\frac{1}{10} 
    \left( 1 +  \tfrac{1}{18} k^2 h^2 \right) 
\qquad
  A_3^{\rm SUT} = -\frac{1}{30}
\end{aligned}
\end{equation}

In \cite{JoShinSuh1996,OpertoEtAl2007,ChenChengWu2012,StolkEtAl2014}
the contributions to $-\Delta - k^2$ are split in a contribution from
$-\Delta$ and a contribution from $-k^2$. We define a three
dimensional, symmetric discretization of the identity $M$ depending on
three parameters $\alpha_1,\alpha_2,\alpha_3$ by
\begin{equation} \label{eq:Helmholtz_family_M}
\begin{aligned}
  (M u)_{l,m,n} = {}& \sum_{(p,q,r) \in \{-1,0,1\}^3} m_{p,q,r} u_{l+p,m+q,n+r}
\\
  m_{p,q,r} = {}& M_{|p| + |q| + |r|} , \qquad (p,q,r) \in \{-1,0,1\}^3
\end{aligned}
\end{equation}
where now $M_0 = \alpha_1$, $M_1 = \frac{\alpha_2}{6}$,
$M_2 = \frac{\alpha_3}{12}$, $M_3 =
\frac{1-\alpha_1-\alpha_2-\alpha_3}{8}$. For constant $k$ the
discrete form of the term $-k^2 u$ is given by $-k^2 (M u)_{l,m,n}$.
Before defining the negative Laplacian we define a two-dimensional
weighting operator, discretizing the identity, 
depending on two additional parameters $\alpha_4,\alpha_5$
\begin{equation} \label{eq:Helmholtz_family_N}
\begin{aligned}
  (N^{[1,2]} u)_{l,m,n} = {}& \sum_{{p,q} \in \{-1,0,1\}^2} n_{p,q} u_{l+p,m+q,n}
\\
  n_{p,q} = {}& N_{|p| + |q|} , \qquad (p,q) \in \{-1,0,1\}^2
\end{aligned}
\end{equation}
with $N_0 = \alpha_4$, $N_1 = \frac{\alpha_5}{4}$,
$N_2 = \frac{1-\alpha_4-\alpha_5}{4}$.
Each of the second deratives in the Helmholtz operator will be
discretized using the tensor product of a two-dimensional weighting
operator that discretizes the identity
and the standard second order discrete second derivative 
$D_2^{[j]}$, where $j=1,2$ or $3$ indicates along which axis the
second derivative operators. The resulting matrix is
\begin{equation} \label{eq:Helmholtz_family}
   - D_2^{[1]} \otimes N^{[2,3]}
   - D_2^{[2]} \otimes N^{[1,3]}
   - D_2^{[3]} \otimes N^{[1,2]}
   - k^2 M 
\end{equation}
In the two-dimensional case there are in total three parameters
$\alpha_1,\alpha_2,\alpha_3$, with
$M_0 = \alpha_1$, $M_1 = \frac{\alpha_2}{4}$,
$M_2 = \frac{1-\alpha_1-\alpha_2}{4}$, and $N_0 = \alpha_3$,
$N_1 = \frac{1-\alpha_3}{2}$.
The coefficients $A_j$ for the 3-D case are given in terms of the 
$\alpha_j$ (modulo an overall constant $h^{-2})$ by
\begin{equation} \label{eq:from_alpha_to_A_3D}
\begin{aligned}
  A_0 = {}&   6 \alpha_4 - (k h)^2 \alpha_1 
&
  A_2 = {}& - \tfrac{1}{2} \alpha_5 + \tfrac{1}{2} (1-\alpha_4-\alpha_5)
     - (k h)^2 \tfrac{1}{12} \alpha_3
\\
  A_1 = {}& - \alpha_4 + \alpha_5 - (k h)^2 \tfrac{1}{6} \alpha_2
&
  A_3 = {}& - \tfrac{3}{4} (1-\alpha_4-\alpha_5) 
    - (k h)^2 \tfrac{1}{8} (1 - \alpha_1-\alpha_2-\alpha_3) .
\end{aligned}
\end{equation}
For the 2-D case we have
\begin{equation} \label{eq:from_alpha_to_A_2D}
\begin{aligned}
  A_0 = {}&   4 \alpha_3 - (k h)^2 \alpha_1 
&
  A_1 = {}&   1 - 2\alpha_3 - (k h)^2 \tfrac{1}{4} \alpha_2
&
  A_2 = {}&  -1 + \alpha_3 - (k h)^2 \tfrac{1}{4} (1-\alpha_1-\alpha_2) .
\end{aligned}
\end{equation}

An advantage of this formulation using tensor products is that in case
of PML layers aligned with the coordinate axes, the second order
operator $D_2^{[j]}$ can simply be replaced by its PML-modified
version%
\footnote{%
In a PML layer, say a layer associated with $x_1 = \text{constant}$,
the derivative $\pdpd{}{x_1}$ is replaced by a 
$\alpha_{\rm PML,1}(x_1) \pdpd{}{x_1}$ where
$\alpha_{\rm PML,1}(x_1) = \frac{1}{1 + i \sigma_1(x_1) / \omega}$ and
the function $\sigma_1$ indicates the local amount of damping 
\cite{Johnson_notespml,Berenger1994,ChewWeedon1994}. We
choose $\sigma_1$ quadratically increasing. The discrete second
derivative in the first coordinate in this PML layer becomes
$h^{-2} \alpha_{\rm PML,1}(x_{l,m,n})
  ( \alpha_{\rm PML,1}(x_{i+1/2,j,k}) (u_{l+1,m,n}-u_{l,m,n})
  - \alpha_{\rm PML,1}(x_{i-1/2,j,k}) (u_{l,m,n}-u_{l-1,m,n}) )$
By rescaling the equations with a factor $\alpha_{\rm PML,1}^{-1}$
the symmetry of the system is restored.}.

In this way we have derived a family of second order accurate
discretizations. In \cite{JoShinSuh1996} and \cite{OpertoEtAl2007}
the same family of discretizations is considered in two resp.\ three
dimensions (but differently parameterized), and coefficients are
chosen such that the maximum phase slowness error is minimized, where
the maximum is taken over all angles and a
range of $kh$ corresponding to at least four points per wavelength. 
This leads to the choices
\begin{equation}
\begin{aligned}
  \alpha_1^{\rm OPT} = {}& 0.4964958 
&
  \alpha_2^{\rm OPT} = {}& 0.4510125
&
  \alpha_3^{\rm OPT} = {}& 0.052487
\\
  \alpha_4^{\rm OPT} = {}& 0.648355362
&
  \alpha_5^{\rm OPT} = {}& 0.296692332
\end{aligned}
\end{equation}
for the method of \cite{OpertoEtAl2007} and 
\begin{equation}
\begin{aligned}
  \alpha_1^{\rm JSS} = {}& 0.6248 
&
  \alpha_2^{\rm JSS} = {}& 0.37524
& 
  \alpha_3^{\rm JSS} = {}& 0.77305
\end{aligned}
\end{equation}
for the method of \cite{JoShinSuh1996}.

In \cite{ChenChengWu2012} and \cite{StolkEtAl2014} it is observed that
smaller phase slowness errors are obtained when the parameters $\alpha_j$
are allowed to vary.  In \cite{ChenChengWu2012} a set of 7 parameters
(in three dimensions) is chosen piecewise constant. We will not
describe this method in detail but refer to the paper for resulting
phase errors.  In \cite{StolkEtAl2014} the above described set of 5
parameters are chosen as piecewise linear functions. However, in this
work, the aim is different, because the phase slowness
differences with a fine scale operator are minimized, not with the
exact operator, so that the values of the phase slowness errors cannot
be compared.

\subsection{Lagrange finite elements on regular meshes}
For the description of Lagrange finite elements on regular meshes,
which will also be used in some of the numerical examples, we start
with the one-dimensional case. In this case the finite element cells
are the intervals $((j-1) Nh, jNh)$, $j=1,2,\ldots$, each containing
$N-1$ interior points and two boundary points. The reference cell is
$(0,N)$, and shape functions on this reference cell are given by
standard Lagrange polynomials, which we will denote by $L^{(N)}_j(x)$,
and which are one at $x=j$ and zero at $x = 0 , 1 ,\ldots, j-1,j+1,
\ldots N-1,N$ and defined to be 0 outside $[0,N]$.  Letting $k \in \ZZ
$ and $l \in \{ 0, 1, \ldots, N-1\}$, the one-dimensional trial and
test functions are given by
\begin{equation}
  \psi_{kN+l}^{(N)}(x) = \left\{ \begin{array}{ll}
  L_0^{(N)}(\frac{x}{h}-(k-1)N) + L_N^{(N)}(\frac{x}{h}-kN) & \text{if } l=0\\ 
  L_l^{(N)}(\frac{x}{h}-kNh) & \text{otherwise}
\end{array}
\right .
\end{equation}
The two and three-dimensional trial and testfunctions are given by
tensor products of the $\psi^{(N)}_j$.
The finite element discretizion is of course derived from the weak form
\begin{equation} \label{eq:weak_form}
  \Phi(u,v)  \stackrel{\rm def}{=}
  \int_\Omega \sum_{j=1}^d \pdpd{u}{x_j}\pdpd{v}{x_j} \, dx
    - \int_\Omega k^2 u v \, dx = \int_\Omega f v \, dx .
\end{equation}
The elements of the matrix in the discretization are given by
\begin{equation}
  a_{l,m,n; p,q,r}
  = 
  \Phi(\psi_{p,q,r}, \psi_{l,m,n} )
\end{equation}
The contribution from the term $\int_\Omega \sum_{j=1}^d
\pdpd{u}{x_j}\pdpd{v}{x_j} \, dx$ can be called the stiffness matrix and
the contribution from $\int_\Omega k^2 u v \, dx$ can be called the mass
matrix. 
If $k$ is constant (or cellwise constant) , the stiffness and
mass matrices can be computed exactly. If $k$ is variable, then only the
stiffness matrix can be computed exactly, and for the mass matrix some
sort of quadrature must be used. For constant $k$ these computations
are standard and easily done using a computer algebra
system, and we will not write down the resulting coefficients.

For constant $k$, the finite difference methods are obviously
translationally symmetric, i.e.\ if we denote by $a_{l,m,n; p,q,r}$ the matrix elements we have
\begin{equation} \label{eq:translation_symmetry}
  a_{l,m,n; p,q,r} = a_{l+A,m+B,n+C; p+A,q+B,r+C}
\end{equation}
For the finite elements there is a symmetry under a subset of
translations given by the $A,B,C$ that are multiples of $N$.

\subsection{Phase slowness errors}

For finite difference methods, 
finding the phase velocities or slownesses comes down to
determining the zeros of the symbol $P(\xi)$ associated with 
a difference operator $P$, see 
subsection~\ref{subsec:symbol_phase_slowness}.
The symbol is not difficult to obtain, for example, for 
compact finite difference discretizations, the symbol is
\begin{equation}
\begin{aligned}
  {}& \qquad P(\xi) = h^{-2} \big[ A_0 
  + 2A_1 (\cos(h\xi_1) + \cos(h\xi_2) + \cos(h\xi_3)) 
  + 2A_2 (\cos(h(\xi_1+\xi_2)) + \cos(h(\xi_1-\xi_2))
\\
  {}&
  +\cos(h(\xi_1+\xi_3)) + \cos(h(\xi_1-\xi_3)) 
  + \cos(h(\xi_2+\xi_3)) + \cos(h(\xi_2-\xi_3)))
\\
  {}& + 2A_3 (  \cos(h(\xi_1+\xi_2+\xi_3)) + \cos(h(\xi_1-\xi_2+\xi_3)) 
  + \cos(h(\xi_1+\xi_2-\xi_3)) + \cos(h(\xi_1-\xi_2-\xi_3))) \big]
\end{aligned}
\end{equation}
To compute the zeros numerically, the standard numerical solver fsolve
from Matlab was used, as well as the more accurate version
vpasolve. 

For finite element methods the elements of the kernel of the operator
are no longer simple plane waves, but Bloch waves. In the appendix is
described how we compute the phase slowness errors in this case.

Phase slowness errors are directionally dependent, i.e. they depend on
$\theta \in S^{d-1}$ as explained in
section~\ref{subsec:symbol_phase_slowness}.
They also depend on $kh$ or equivalently on the number of points per
wavelength $G = \frac{2\pi}{kh}$.
We have computed the maximum relative
phase slowness errors over $\theta \in S^{d-1}$ for
a number of schemes as a function of $1/G$.
In two and three dimensions these schemes are the
finite element schemes of order $1,2,3,4,6$ and $8$, the standard
finite difference discretizations of order $2,4,6$ and $8$ and the
sixth order compact method of \cite{TurkelEtAl2013}. In the
graphs below these results will be indicated by the letter FE1, FE2
etc., FD2, FD4, etc.\, and CHO6. In two dimensions we also included
results for the
QS-FEM method of \cite{BabuskaEtAl1995} and the method of Jo, Shin and
Suh \cite{JoShinSuh1996}, denoted by JSS. In three dimensions we also
have the method of Operto Et Al \cite{OpertoEtAl2007}, indicated by
OPT4 and the method of Sutmann \cite{Sutmann2007}, indicated by SUT.  
We have not included results on the method of
\cite{ChenChengWu2012}, these are given in Figure 2(c) in that
work. The phase slowness errors as a function of
$1/G$ are plotted in Figure~\ref{fig:phase_errors_exist_2D}
and~\ref{fig:phase_errors_exist_3D}.
At the end of section~\ref{sec:IOFD} we will briefly discuss these results.

\begin{figure}
\begin{center}
\includegraphics[width=10cm]{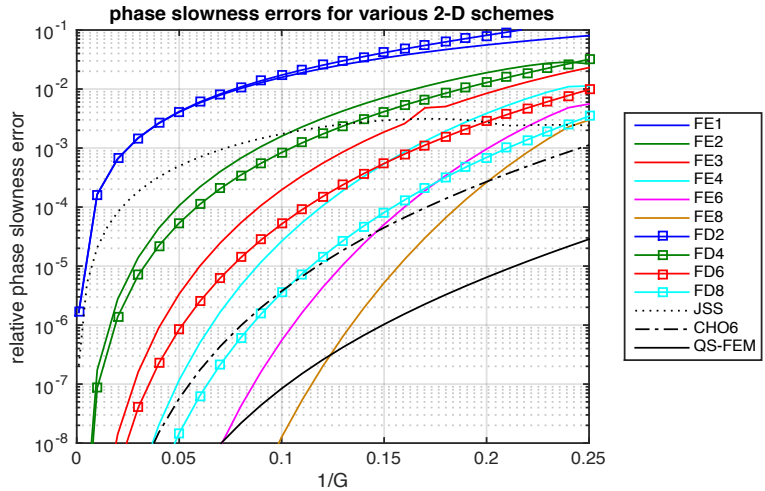}
\end{center}
\caption{Phase slowness errors for some 2-D schemes 
  as a function of the inverse number of points per wavelength $1/G$.}
\label{fig:phase_errors_exist_2D}
\end{figure}
\begin{figure}
\begin{center}
\includegraphics[width=10cm]{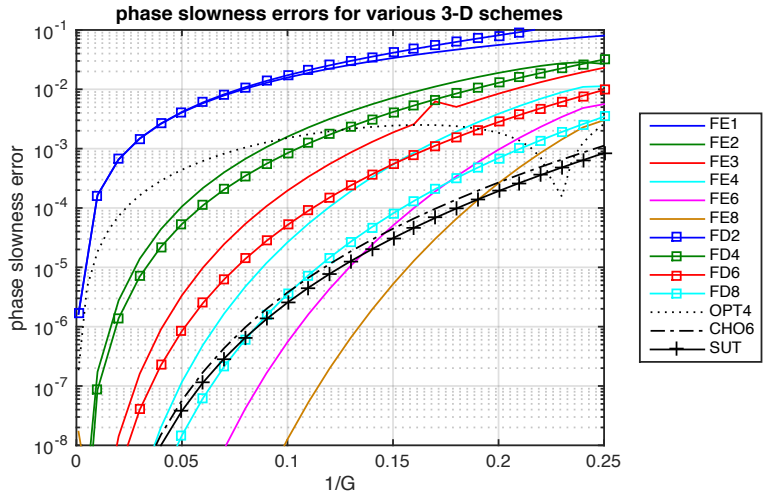}
\end{center}
\caption{Phase slowness errors for some 3-D schemes
  as a function of the inverse number of points per wavelength $1/G$.}
\label{fig:phase_errors_exist_3D}
\end{figure}

\section{A dispersion minimizing scheme with amplitude corrections}
\label{sec:IOFD}

In this section we will define our new discretization of the Helmholtz
equation. In this scheme, the approximate solution to the Helmholtz
equation $H u = f$, $H = -\Delta -k(x)^2$ is found by solving a
discrete system
\begin{equation} \label{eq:discrete_system_abstract1}
  P v = Q f
\end{equation}
and then setting
\begin{equation} \label{eq:discrete_system_abstract2}
  u = Q v ,
\end{equation}
where $P$ and $Q$ are compact finite difference operators defined
momentarily.

In section~\ref{sec:theory_variable} we studied ray-theoretic
solutions to difference equations of the type
(\ref{eq:discrete_system_abstract1}) and
(\ref{eq:discrete_system_abstract2}) with $f = \delta$, 
and we observed that the ray-theoretic solution to these equation
would be identical to those of the Helmholtz equation
\begin{equation}
  H u = \delta, \qquad 
  H = -\Delta - k(x)^2
\end{equation}
if the requirements (i) to (iii) of 
subsection~\ref{subsec:amplitude_correction}
are satisfied.
It follows from the derivations that if these properties are not satisfied exactly,
but there are small differences between the zero set of $P_1$ and that
of $H_1$ and between the values of $Q_1(\xi,k)^2$ and
$\frac{\partial P_1/\partial\xi(\xi,k)}{\partial H_1/\partial \xi(\xi,k)}$
then there will be small
errors in the phase and amplitude of the ray-theoretic
solutions. The operators $P$ and $Q$ will be chosen such that these differences
are minimal.
We will first construct $P$ in
subsection~\ref{subsec:IOFD_Helmholtz}. Then $Q$ will be constructed
in subsection~\ref{subsec:construct_Q}. In subsection we will discuss
the phase errors of the new method.

\subsection{IOFD discretization of the Helmholtz operator\label{subsec:IOFD_Helmholtz}}
In sections~\ref{sec:theory_constant} and~\ref{sec:theory_variable} a
general form for $P$ was given in terms of functions $f_\gamma$ of $kh$, see
(\ref{eq:define_P_constant}), (\ref{eq:define_P_variable_k}). 
For the $5$ or $3$ dimensional operator family  of 
subsection~\ref{subsec:compact_fd} (in 3 and 2 dimensions
respectively), these are given by
\begin{equation}
  f_{\gamma} = \left\{ \begin{array}{ll}
        6 \alpha_4 - (k h)^2 \alpha_1 
            & \text{for $\gamma \in \{ -1,0,1\}^3$, $|\gamma| = 0$}\\
        - \alpha_4 + \alpha_5 - (k h)^2 \tfrac{1}{6} \alpha_2
            & \text{for $\gamma \in \{ -1,0,1\}^3$, $|\gamma| = 1$}\\
- \tfrac{1}{2} \alpha_5 + \tfrac{1}{2} (1-\alpha_4-\alpha_5)
     - (k h)^2 \tfrac{1}{12} \alpha_3
            & \text{for $\gamma \in \{ -1,0,1\}^3$, $|\gamma| = 2$}\\
- \tfrac{3}{4} (1-\alpha_4-\alpha_5) 
    - (k h)^2 \tfrac{1}{8} (1 - \alpha_1-\alpha_2-\alpha_3)
            & \text{for $\gamma \in \{ -1,0,1\}^3$, $|\gamma| = 3$,}
  \end{array} \right. 
\end{equation}
in 3-D and by
\begin{equation}
  f_{\gamma} = \left\{ \begin{array}{ll}
      4 \alpha_3 - (k h)^2 \alpha_1 
            & \text{for $\gamma \in \{ -1,0,1\}^2$, $|\gamma| = 0$}\\
      1 - 2\alpha_3 - (k h)^2 \tfrac{1}{4} \alpha_2
            & \text{for $\gamma \in \{ -1,0,1\}^2$, $|\gamma| = 1$}\\
      -1 + \alpha_3 - (k h)^2 \tfrac{1}{4} (1-\alpha_1-\alpha_2)
            & \text{for $\gamma \in \{ -1,0,1\}^2$, $|\gamma| = 2$.}
  \end{array} \right. 
\end{equation}
in 2-D. Here $|\gamma| = |\gamma_1| + \ldots + |\gamma_d|$ and we used
equations (\ref {eq:from_alpha_to_A_3D}) and 
(\ref{eq:from_alpha_to_A_2D}). We let $\alpha_j$ depend on 
$\frac{hk}{2\pi} = 1/G$, where $G$ is the number of points per
wavelength used in the discretization
\begin{equation}
  \alpha_j = \alpha_j(1/G) , \qquad 1/G = \frac{k h}{2\pi} .
\end{equation}
Next we will choose a parameterization for these function and we will
describe how, by minimizing the phase slowness errors in a least-squares
sense, we obtain suitable choices of the functions $\alpha_j$,
$j = 1,\ldots, 2d-1$.

In \cite{StolkEtAl2014} the $\alpha_j$ where chosen to depend
piecewise linearly on $1/G$.  Here we let $\alpha_j$
depend piecewise polynomially on $1/G$, using Hermite interpolation.
We will specify a number of control nodes, and at each node the value of
$\alpha_j$ and its first derivative $\pdpd{\alpha_j}{(1/G)}$ are prescribed.
We will assume that the coefficients $\alpha_j$ vary slowly, so that 
we can indeed define the four coefficient of the stencil using five 
parameters depending on $1/G$. If $n_{\rm C}$ denotes the number of
control nodes, in this way the functions $\alpha_j$ are parameterized
by $2 n_{\rm C}$ parameters. In total we have $(4d-2) n_{\rm C}$
parameters, collectively denoted by $P$.

Next we specify the objective functional. The first contribution to
the objective functional is the square integrated phase slowness
error, integrated over angle and $1/G$.
Because of the symmetries, the phase slowness error need not be
integrated over all $\theta \in S^{d-1}$, but can be integrated over a
subset $\Theta_d$ of the sphere. In 2 dimensions, the angle variable
can be chosen in $\Theta_2 = [0,\pi/4]$. In 3 dimensions, using
spherical coordinates $\theta = (\theta_1,\theta_2) 
= \text{( polar angle, azimuthal angle)}$, the domain is
$\Theta_3=  [0,\pi/2] \times [0,\pi/4]$.
The second contribution to the objective functional 
is a regularization term involving $\frac{d \alpha_j}{d(1/G)}$.
In summary, we have
\begin{equation} \label{eq:objective_functional}
  T(P) = \int_0^{1/G_{\rm max}} \int_{\Theta_d} | \delta_{\rm ph}(\theta,P) |^2 \, d \theta \, d(1/G) 
  + \lambda \int _0^{1/G_{\rm max}} \sum_{j=1}^{2d-1} 
    \left| \frac{d \alpha_j}{d(1/G)} \right|^2 \, d(1/G) .
\end{equation}
This integral is discretized, using a weighted sum of regularly
sampled contributions.
By $n_{\rm A}$ we denote the number of angles to discretize the
integration over the sphere and by $n_{\rm G}$ the
number of choices for the parameter $1/G$. In two dimensions we chose
$n_{\rm A}=20$, in three dimensions $n_{\rm A} = 200$. The $1/G$ axis
was discretized in steps of $0.01$ in the integral
(\ref{eq:objective_functional}).
The value of $\lambda = 10^{-12}$ was used and
control nodes where chosen in the interval $[0,
0.4]$ with distance $0.05$. 

The objective functional was minimized using the Matlab function
lsqnonlin, aimed particularly at least-squares problems.  We found
that the optimization problem using the least squares objective
functional converges better than other types of objective functionals,
such as a sup-norm. This was done in three steps. First the control 
values for $1/G$ in $[0,0.2]$ were determined, then for $1/G$ in $[0.1,0.3]$
keep the values for $1/G < 0.1$ equal to those already obtained, and
then for $1/G$ in $[0.2,0.4]$ keeping those for $1/G < 0.2$ already
obtained. In this way somewhat better result were obtained than when
minimization was done directly for $G \in [0,0.4]$.
The parameters were determined heuristically, in such a way that
increasing the number of discretization points would not yield
substantial improvements. The phase slowness errors for $1/G \in
[0,0.1]$ were most sensitive to details of the method. As the phase
speed errors are very small in this parameter range we have not
explored this further.

The results of the optimization for the two- and three-dimensional
case are given in Tables~\ref{tab:coeff_IOFD_2D} and
\ref{tab:coeff_IOFD_3D}.  The phase slowness errors as a function of
$1/G$ (maximum over angle) are given in
Figure~\ref{fig:IOFD_phase_errors}, together with those of the IOFD
and CHO6 methods.

\begin{table}
\begin{center}
\small
\begin{tabular}{c|cccccc}
$1/G$ 
& $\alpha_1$ & $\pdpd{\alpha_1}{(1/G)}$
& $\alpha_2$ & $\pdpd{\alpha_2}{(1/G)}$
& $\alpha_3$ & $\pdpd{\alpha_3}{(1/G)}$ \\ \hline
0.00 &  0.702988 &  0.009776 &  0.260661 & -0.017374 &  0.833321 & -0.000611 \\ 
0.05 &  0.705833 & -0.009915 &  0.253348 & -0.046566 &  0.832408 & -0.036116 \\ 
0.10 &  0.704294 & -0.053006 &  0.251395 & -0.029803 &  0.829828 & -0.066179 \\ 
0.15 &  0.700617 & -0.097783 &  0.250099 & -0.016222 &  0.825956 & -0.087744 \\ 
0.20 &  0.694664 & -0.144215 &  0.249306 & -0.010052 &  0.821312 & -0.096545 \\ 
0.25 &  0.686959 & -0.169986 &  0.247309 & -0.061204 &  0.817120 & -0.066627 \\ 
0.30 &  0.677167 & -0.227359 &  0.243807 & -0.072388 &  0.815138 & -0.008931 \\ 
0.35 &  0.664000 & -0.306018 &  0.239969 & -0.074632 &  0.816970 &  0.085964 \\ 
0.40 &  0.645668 & -0.434744 &  0.237317 & -0.026502 &  0.823706 &  0.183724 \\ 
\hline
\end{tabular}%
\end{center}
\caption{Coefficients two-dimensional IOFD}
\label{tab:coeff_IOFD_2D}
\end{table}
\begin{table}
\begin{center}\small
\hspace*{-2cm}%
\begin{tabular}{c|cccccccccc}
$1/G$ 
& $\alpha_1$ & $\pdpd{\alpha_1}{(1/G)}$
& $\alpha_2$ & $\pdpd{\alpha_2}{(1/G)}$
& $\alpha_3$ & $\pdpd{\alpha_3}{(1/G)}$
& $\alpha_4$ & $\pdpd{\alpha_4}{(1/G)}$
& $\alpha_5$ & $\pdpd{\alpha_5}{(1/G)}$ \\ \hline
0.0000 &  0.635413 & -0.000228 &  0.210638 &  0.016303 &  0.172254 & -0.014072 
      &  0.710633 & -0.006278 &  0.245303 &  0.019576 \\ 
0.0500 &  0.635102 & -0.015578 &  0.210152 & -0.023424 &  0.171912 & -0.005802 
      &  0.709821 & -0.047764 &  0.245148 &  0.021398 \\ 
0.1000 &  0.634166 & -0.034804 &  0.208167 & -0.043396 &  0.171146 & -0.012462 
      &  0.707374 & -0.070981 &  0.244762 &  0.007493 \\ 
0.1500 &  0.632093 & -0.054496 &  0.205348 & -0.065935 &  0.170031 & -0.022145 
      &  0.703359 & -0.088202 &  0.245160 &  0.009937 \\ 
0.2000 &  0.628341 & -0.103457 &  0.201605 & -0.069385 &  0.169740 &  0.001893 
      &  0.698813 & -0.092327 &  0.245687 &  0.012201 \\ 
0.2500 &  0.622526 & -0.133896 &  0.197423 & -0.098212 &  0.169475 & -0.002559 
      &  0.694726 & -0.066617 &  0.246454 &  0.016791 \\ 
0.3000 &  0.614611 & -0.183988 &  0.192414 & -0.115398 &  0.168690 & -0.005589 
      &  0.692615 & -0.011177 &  0.247743 &  0.029213 \\ 
0.3500 &  0.603680 & -0.255991 &  0.186819 & -0.120930 &  0.167581 & -0.015564 
      &  0.694109 &  0.077605 &  0.250098 &  0.059733 \\ 
0.4000 &  0.588498 & -0.356326 &  0.180737 & -0.132266 &  0.166640 & -0.001852 
      &  0.700902 &  0.199685 &  0.254352 &  0.106049 \\ 
\hline
\end{tabular}%
\hspace*{-2cm}
\end{center}
\caption{Coefficients three-dimensional IOFD}
\label{tab:coeff_IOFD_3D}
\end{table}
\begin{figure}
\begin{center}
\includegraphics[width=10cm]{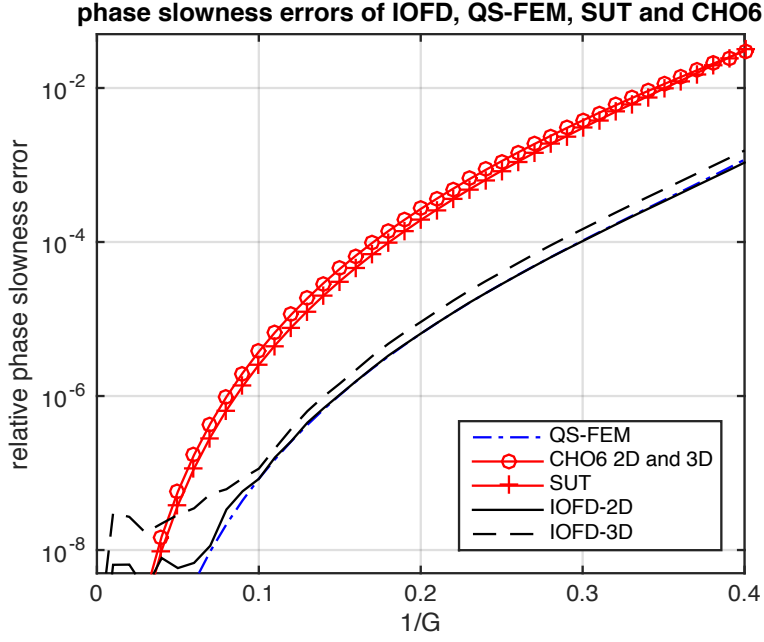}
\end{center}
\caption{Phase slowness errors for the IOFD method compared
  to QS-FEM and the sixth order method of \cite{Sutmann2007} and
  \cite{TurkelEtAl2013}.}
\label{fig:IOFD_phase_errors}
\end{figure}

\subsection{Amplitude correction operators\label{subsec:construct_Q}}

Here we will construct the difference operators $Q$. We will use the
second order discretizations of the identity defined in
(\ref{eq:Helmholtz_family_M}). Here we will denote the coefficients of
this family by $\beta_j$, which will be functions of
$\frac{kh}{2\pi} = 1/G$. Thus the functions $g_\gamma$ associated with
$Q$, see (\ref{eq:define_matrix_elements_q}) are given by
\begin{equation}
  g_{\gamma} = \left\{ \begin{array}{ll}
    \beta_1
        & \text{for $\gamma \in \{ -1,0,1\}^3$, $|\gamma| = 0$}\\
    \frac{\beta_2}{6}
        & \text{for $\gamma \in \{ -1,0,1\}^3$, $|\gamma| = 1$}\\
    \frac{\beta_3}{12}
        & \text{for $\gamma \in \{ -1,0,1\}^3$, $|\gamma| = 2$}\\
    \frac{1-\beta_1-\beta_2-\beta_3}{8}
        & \text{for $\gamma \in \{ -1,0,1\}^3$, $|\gamma| = 3$,}
      \end{array} \right.
\end{equation}
in 3-D and by
\begin{equation}
  g_{\gamma} = \left\{ \begin{array}{ll}
    \beta_1
            & \text{for $\gamma \in \{ -1,0,1\}^2$, $|\gamma| = 0$}\\
    \frac{\beta_2}{4}
            & \text{for $\gamma \in \{ -1,0,1\}^2$, $|\gamma| = 1$}\\
    \frac{1-\beta_1-\beta_2}{4}
            & \text{for $\gamma \in \{ -1,0,1\}^2$, $|\gamma| = 2$,}
    \end{array} \right.
\end{equation}
in 2-D, where $\beta_j = \beta_j(1/G)$. The $\beta_j(1/G)$ will be defined by Hermite interpolation from control
values similarly as we did for the $\alpha_j(1/G)$.
The control values are chosen to minimize 
a discrete approximation of the integral
\begin{equation}
  \int_0^{1/G_{\rm max}}
    \int_{\Theta_d} \left[ Q(\xi) 
        - \sqrt{ \frac{\| \partial P/\partial\xi(\xi)\|}
    {\| \partial H/\partial \xi(\xi)\| } }
        \right]^2_{\xi = \omega s_{\rm ph}(\theta) \theta} 
    \, d\theta
   \, d (1/G) .
\end{equation}
This integral is discretized in the same way as in the previous
subsection. This results in a linearly constrained linear least 
squares problem which is easy to solve in Matlab. The resulting 
coefficients are given in Tables~\ref{tab:coeff_AC_2D} and 
\ref{tab:coeff_AC_3D} below. 
The maximum over angle of the error
$ \left. \frac{ Q(\xi) }{ \sqrt{ 
    \frac{\| \partial P/\partial\xi(\xi) \| }
        { \| \partial H/\partial \xi (\xi) \| } } } \right|^2_{\xi = \omega s_{\rm ph}(\theta) \theta}
$ varied between around $10^{-8}$ for $1/G =0.05$ and
$10^{-2}$ for 1/G = 0.4.

\begin{table}
\begin{center}
\small
\begin{tabular}{c|cccc}
$1/G$ 
& $\beta_1$ & $\pdpd{\beta_1}{(1/G)}$
& $\beta_2$ & $\pdpd{\beta_2}{(1/G)}$\\ \hline
0.00 &  0.872589 & -0.115476 &  0.088139 &  0.232493 \\ 
0.05 &  0.870989 & -0.080799 &  0.089351 &  0.080994 \\ 
0.10 &  0.866560 & -0.122182 &  0.092018 &  0.075452 \\ 
0.15 &  0.858994 & -0.189920 &  0.096178 &  0.106183 \\ 
0.20 &  0.847495 & -0.277477 &  0.102309 &  0.147420 \\ 
0.25 &  0.830913 & -0.394429 &  0.110797 &  0.198380 \\ 
0.30 &  0.807375 & -0.559277 &  0.122158 &  0.261263 \\ 
0.35 &  0.773715 & -0.806746 &  0.137030 &  0.337561 \\ 
0.40 &  0.724163 & -1.211119 &  0.155971 &  0.420753 \\ 
\hline
\end{tabular}%
\end{center}
\caption{Coefficients amplitude correction operator  $Q$ in 2-D}
\label{tab:coeff_AC_2D}
\end{table}
\begin{table}
\begin{center}\small
\hspace*{-2cm}%
\begin{tabular}{c|cccccc}
$1/G$ 
& $\beta_1$ & $\pdpd{\beta_1}{(1/G)}$
& $\beta_2$ & $\pdpd{\beta_2}{(1/G)}$
& $\beta_3$ & $\pdpd{\beta_3}{(1/G)}$ \\ \hline
0.0000 &  0.806683 &  0.002423 &  0.193113 & -0.002685 & -0.056266 & -0.002551 \\
0.0500 &  0.832963 & -0.081724 &  0.114016 &  0.032813 &  0.020075 &  0.058590 \\
0.1000 &  0.841034 & -0.130484 &  0.076623 &  0.029868 &  0.061360 &  0.078398 \\
0.1500 &  0.833587 & -0.231333 &  0.076280 &  0.129614 &  0.067935 &  0.024410 \\
0.2000 &  0.821230 & -0.304691 &  0.078943 &  0.086321 &  0.074389 &  0.130587 \\
0.2500 &  0.803736 & -0.416375 &  0.081855 &  0.072002 &  0.084073 &  0.220607 \\
0.3000 &  0.779384 & -0.573760 &  0.084646 &  0.054207 &  0.098065 &  0.329810 \\
0.3500 &  0.745468 & -0.801027 &  0.086156 &  0.004734 &  0.118341 &  0.486328 \\
0.4000 &  0.697405 & -1.148951 &  0.083351 & -0.136764 &  0.148391 &  0.732785 \\
\hline
\end{tabular}%
\hspace*{-2cm}
\end{center}
\caption{Coefficients amplitude correction operator  $Q$ in 3-D}
\label{tab:coeff_AC_3D}
\end{table}

\subsection{Comparison of phase slowness errors\label{eq:impact_improvements}}
The following conclusions can be drawn from the data in
Figures~\ref{fig:phase_errors_exist_2D},
\ref{fig:phase_errors_exist_3D} and~\ref{fig:IOFD_phase_errors}.
First the QS-FEM method of \cite{BabuskaEtAl1995} (in two dimensions)
and the IOFD method developed here (in two and three dimensions)
perform remarkably well considering their small stencils. They
provides a substantial improvement, roughly a factor 20, in
phase errors compared to the compact sixth order scheme SUT and CHO6 
of \cite{Sutmann2007} and \cite{TurkelEtAl2013}, which in turn are better than other
alternatives. For higher order FD and FE methods, as can be expected,
the error becomes small if both the number of points per wavelength
and the order $N$ become large, however this effect sets in quite
late, e.g.\ at eight points per wavelength and $N=8$ the relative
phase slowness errors of the finite element method are roughly equal to
those of QS-FEM and IOFD.
 
Next we discuss how much accuracy might be needed, and in how far the
improvements will make a difference in simulations.
In view of (\ref{eq:phase_error_from_dispersion_error})
it is not unreasonable to require at least that
$\delta_{\rm ph} \lesssim 0.01 \frac{\lambda}{L}$.  In a regime of wave
propagation over several hundreds wavelengths, using a mesh with five
points per wavelength, from the methods considered only QS-FEM and
IOFD satisfy this. At six points per wavelength the CHO6 method is
near this bound while FE8 (which is much more expensive) also
qualifies. So in these situations the improved phase slowness accuracy
obtained by using QS-FEM or IOFD can be expected to have some impact
in terms of lower cost compared to FE8 and in terms of improved 
accuracy compared to CHO6 and other compact finite difference methods. The 
latter will be confirmed in the examples in the next section.

\section{Numerical examples}
\label{sec:simulations}

In this section we present two numerical experiments, first in a
constant medium, and then in a smoothly varying medium. We will
present two-dimensional examples with large domain sizes on the order
of hundreds of wavelengths.

As mentioned, phase slowness errors typically lead to phase shift
errors in the solutions. Considering wave propagation over 500
wavelengths as an example, it follows from
(\ref{eq:phase_error_from_dispersion_error}) and the surrounding
discussion that these phase shifts errors for IOFD should be
negligibly small for meshes with five or six points per wavelength,
and still quite small for four and three points per wavelength. For
other methods these errors should show up much stronger. In our first
example we will verify this numerically, assuming a constant velocity
model.
 
To simulate a point source at a given grid point, we will simply use a
discrete $\delta$-function. An unbounded domain is simulated by adding
a damping layer around the domain of interest, with a nonzero
imaginary contribution to $k$ that quadratically increases from the
boundary of the domain of interest%
\footnote{
In a 1-D damped Helmholtz equation $-\frac{du}{dx^2} - k^2 u$ with
$k$ constant, $k = \alpha + i \beta$ solutions decay as 
$u = e^{i (\alpha + i \beta)x}$. If $k$ varies slowly the damping
  becomes proportional to $e^{ - \int \beta(x) \, dx}$. The quadratic
  profile is chosen such that $e^{ - \int \beta(x) \, dx}$ is on the
  order of $0.001$ to $0.01$. Reflected waves pass twice through 
  the damping layer. Unfortunately reflections occur due to the medium
  variations. To make these small $\operatorname{Im}(k)$ must increase
  slowly and these layers must be quite thick. In our experiments we 
  used on the order of 5 to 10 wavelengths.}. 
The discrete system of equations
is formed using a Matlab code written for this purpose and then either
solved directly or, for the larger examples, exported to disk. In the
latter case, the resulting linear systems are then solved using the
MUMPS parallel direct solver \cite{MUMPS:1}  on a few nodes of the Lisa
cluster of surfsara (www.surfsara.nl). This
system contains 32 parallel nodes with each two intel Xeon
processors E5-2650 v2 running at 2.60 GHz and 64 GB memory, connected
by Mellanox FDR Infiniband. In the examples in of section between
1 and 4 nodes were used in parallel.

To easily observe the absence or presence of the phase shifts, we plot
the resulting wave field on a 45 degree segment of an annulus, with
the radial coordinate varying on an interval of about a
wavelength. The location where the real part is minimal, according to
the exact solution, is indicated by a line that is plotted. The
transformation of the field to polar coordinates is done by using
cubic interpolation from the numerical solution on a Cartesian
mesh. Schematically this is displayed in
Figure~\ref{fig:const_example_schematic}, where part (b) of the figure
is a plot in polar coordinates of the indicated region of part (a).
\begin{figure}
\begin{center}
(a)\\
\includegraphics[width=45mm]{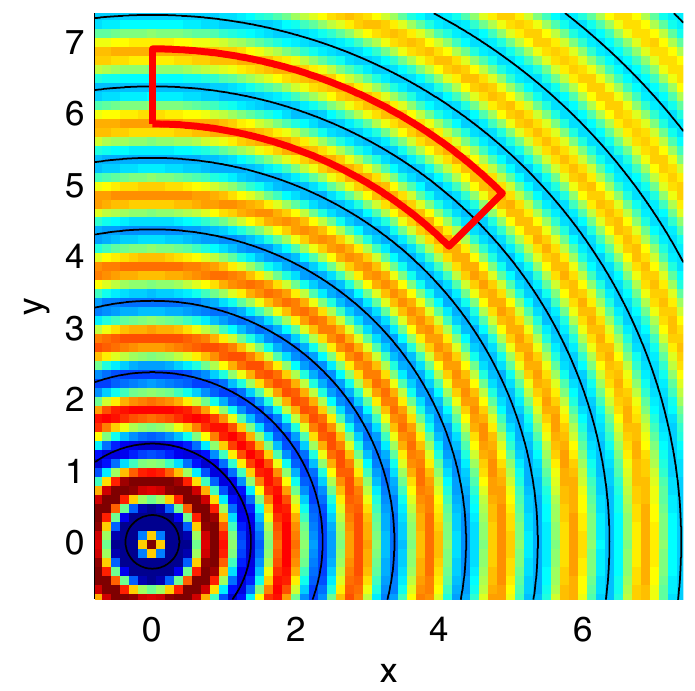}\\
(b)\\
\includegraphics[width=45mm]{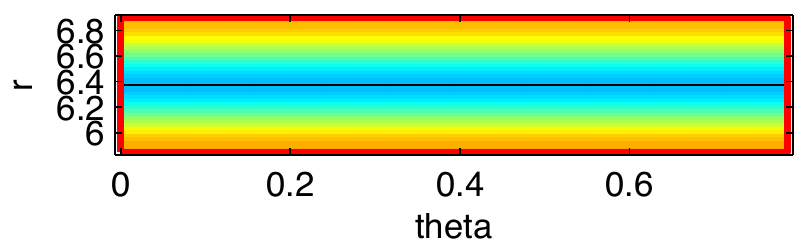}\\
\end{center}
\caption{Phase shift errors are easily observed by plotting a 45-degree part
of an annulus, see figure (b). Cubic spline interpolation is applied
to map the data of figure (a) to polar coordinates.}
\label{fig:const_example_schematic}
\end{figure}

The results from the computations are displayed in
Figure~\ref{fig:const_example}. Part (a) shows that for second order
finite differences at 10 points per wavelength (ppw)
a clearly visible phase shift already
occurs after 20 wavelengths. In (b) we see that for the JSS method
a clearly visible phase shift occurs after 50 wavelengths. 
In (c), (d) and (e) we investigate the
sixth order method CHO6 of \cite{TurkelEtAl2013} at 6, 5 and 4 ppw. 
(We have chosen one of the higher order methods).
At 6, 5 and ppw the maximum phase errors at 500 wavelengths are 0.27,
0.84 and $\pi$ radians respectively and the associated phase shifts
are increasingly visible in the pictures.
In parts (e) to (i) we plot results for
the IOFD method at 6, 5, 4, 3 and 2.5 ppw. 
At 6, 5 and 4 ppw the maximum phase shifts are 0.0065, 0.020 and
0.089 radians respectively, i.e.\ considerably smaller than observed for
CHO6. At 3 ppw the phase
shift after 500 wavelengths is clearly visible, only at 2.5 points per
wavelength does it become large and in this case the field is plotted
at 100 instead of less than 500 wavelengths from the source.

In Figure~\ref{fig:const_ampl_example} we plot the amplitude errors for
IOFD at 3 and 4 ppw. For more than 4 ppw they were increasingly small.

\begin{figure}
\begin{center}
\begin{minipage}[t]{40mm}
\begin{center}
(a)\\
\includegraphics[width=35mm]{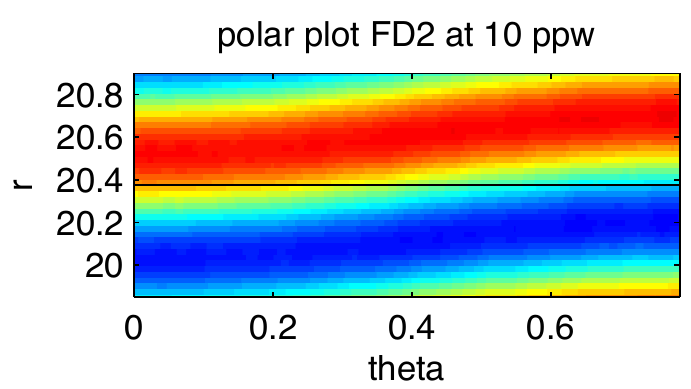}\\
(b)\\
\includegraphics[width=35mm]{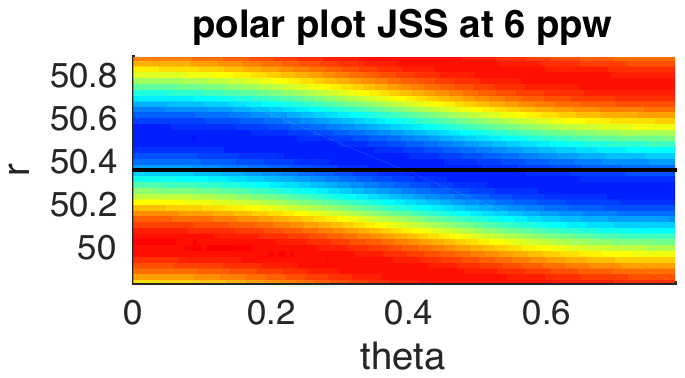}\\
(c)\\
\includegraphics[width=37mm]{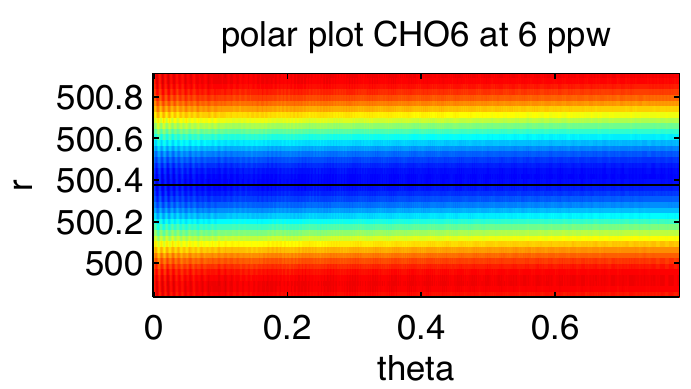}\\
(d)\\
\includegraphics[width=37mm]{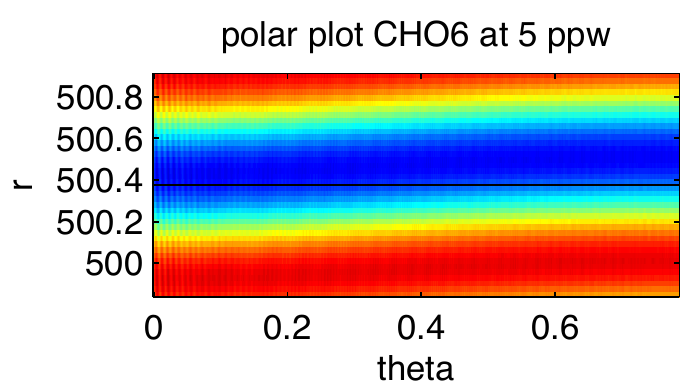}\\
(e)\\
\includegraphics[width=37mm]{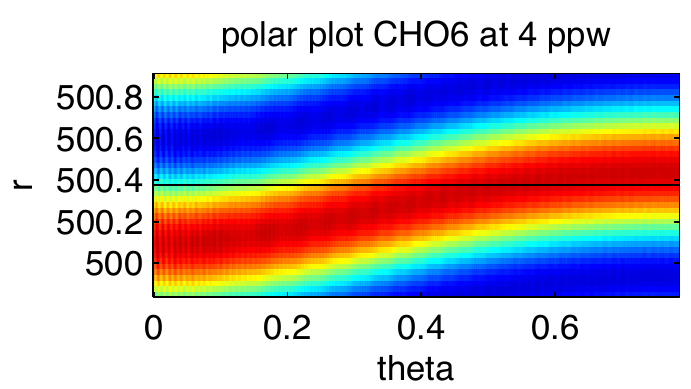}\\
\end{center}
\end{minipage}
\hspace*{10mm}
\begin{minipage}[t]{40mm}
\begin{center}
(f)\\
\includegraphics[width=37mm]{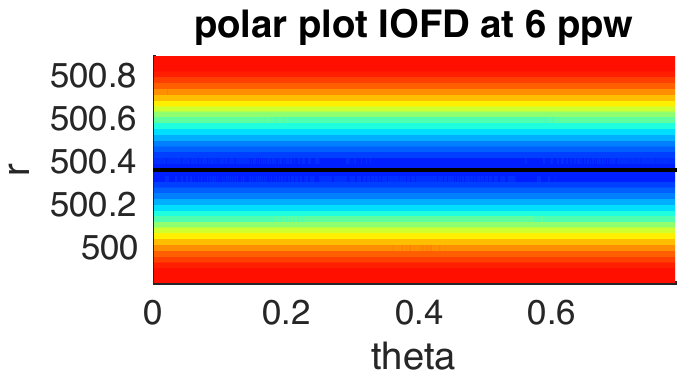}\\
(g)\\
\includegraphics[width=37mm]{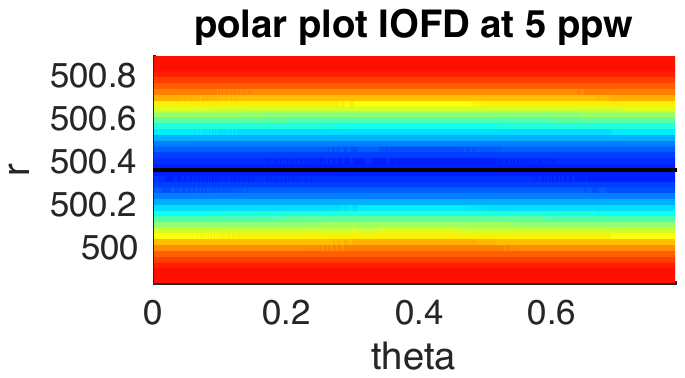}\\
(h)\\
\includegraphics[width=37mm]{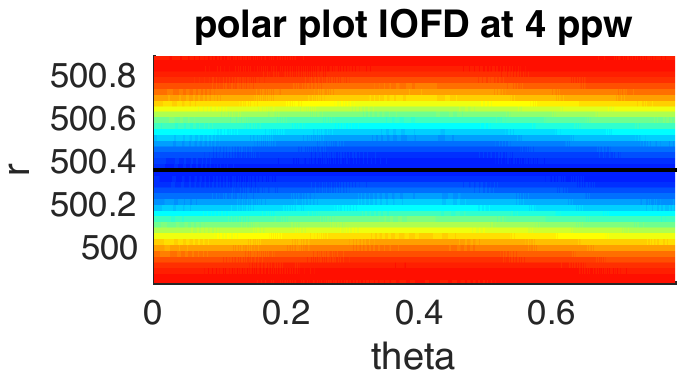}\\
(i)\\
\includegraphics[width=37mm]{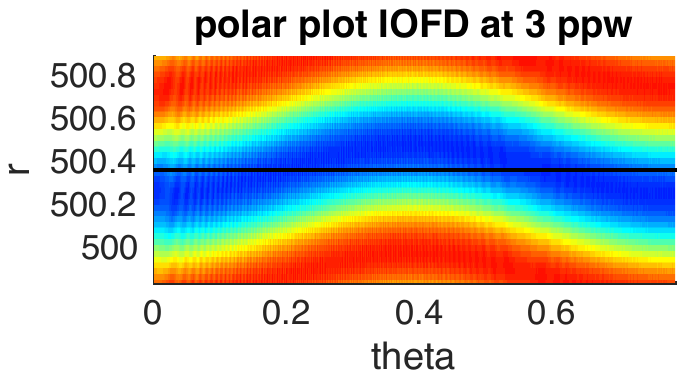}\\
(j)\\
\includegraphics[width=37mm]{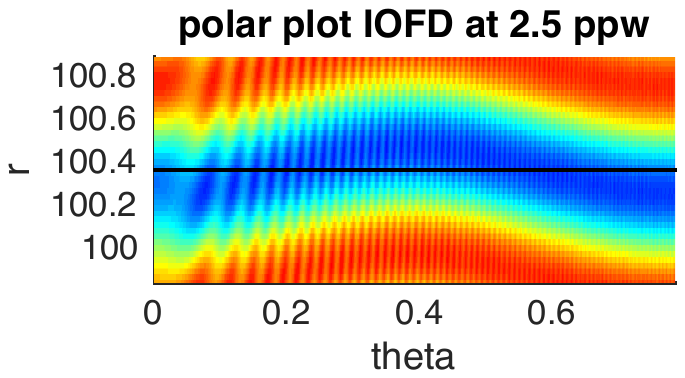}\\
\end{center}
\end{minipage}
\end{center}
\caption{Plots of numerical solutions over a 45 degree part of an
  annulus for several numerical methods. (a) FD2 at 10 ppw; (b) JSS at
  6 ppw; (c), (d), (e) sixth order method of \cite{TurkelEtAl2013} at 6, 5 and 4 ppw;
  (f)-(j) IOFD method at 6, 5, 4, 3 and 2.5 ppw.}
\label{fig:const_example}
\end{figure}

\begin{figure}
\begin{center}
\begin{minipage}[t]{40mm}
\begin{center}
(a)\\
\includegraphics[width=37mm]{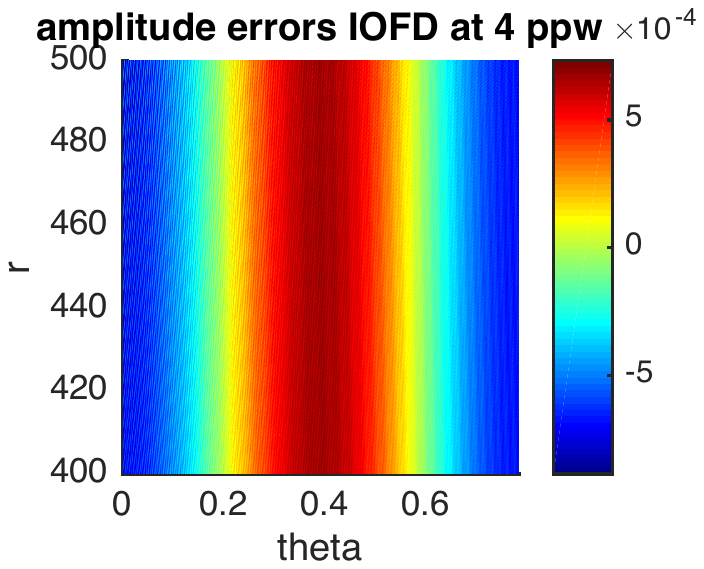}\\
\end{center}
\end{minipage}
\hspace*{10mm}
\begin{minipage}[t]{40mm}
\begin{center}
(b)\\
\includegraphics[width=37mm]{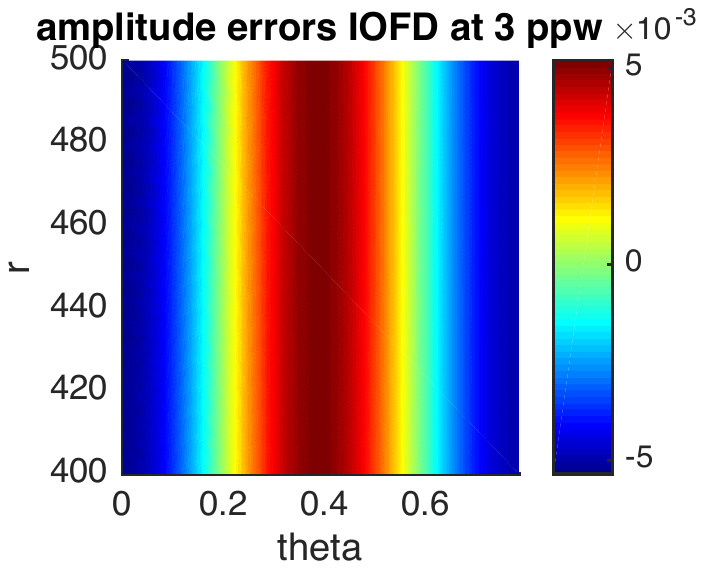}\\
\end{center}
\end{minipage}
\end{center}
\caption{Amplitude errors relative to 
exact solution for IOFD at (a) 4 ppw (b) 3 ppw.}
\label{fig:const_ampl_example}
\end{figure}

In our second example $k$ is variable. To avoid that errors due to the
discretization of the velocity model become dominant we use use a
smoothly varying velocity model, namely a smoothed Marmousi model. In
this example we will compare a solution with IOFD using a minimum of
six points per wavelength with a fourth order finite element solution
using twice as many grid points in each direction. In these examples
the right hand side was a point source and the linear systems were
again solved with MUMPS.  In case of variable coefficients we assumed
that $k(x)$ is defined on the cell centers. The values of
$f_\gamma(kh(x))$ (see (\ref{eq:define_P_variable_k})) at other points
were obtained using linear interpolation from the values of
$f_\gamma(hk(x))$ at the cell centers.

The velocity model is given in Figure~\ref{fig:marmousmooth_vel}. It
is obtained from the Marmousi model by convolving along both of the
axes with a cos square pulse of width 160 meter. We will give results
for 50 and 100 Hz. A solution for the first case is given in
Figure~\ref{fig:marmousmooth_sol}. Figure~\ref{fig:marmousmooth_compare}
contains four plots. The top plots are reference amplitudes for 
obtaining relative errors and the bottom two plots are relative errors
with respect to the reference values. In both cases we give results
for 50 and for 100 Hz. The reference value is a local
average of the absolute value of the solution over a square of about 2
by 2 wavelengths. This is done because the solutions themselves
contain nodal points from interfering waves, where the
amplitude is very small, and are hence not directly suitable
as reference value. Very small relative errors
are obtained (except directly at the source point), ranging from less
than 0.01 over most of the domain to 0.05 or 0.08 at isolated spots 
where the absolute amplitude is small.

\begin{figure}
\begin{center}
\includegraphics[width=16cm]{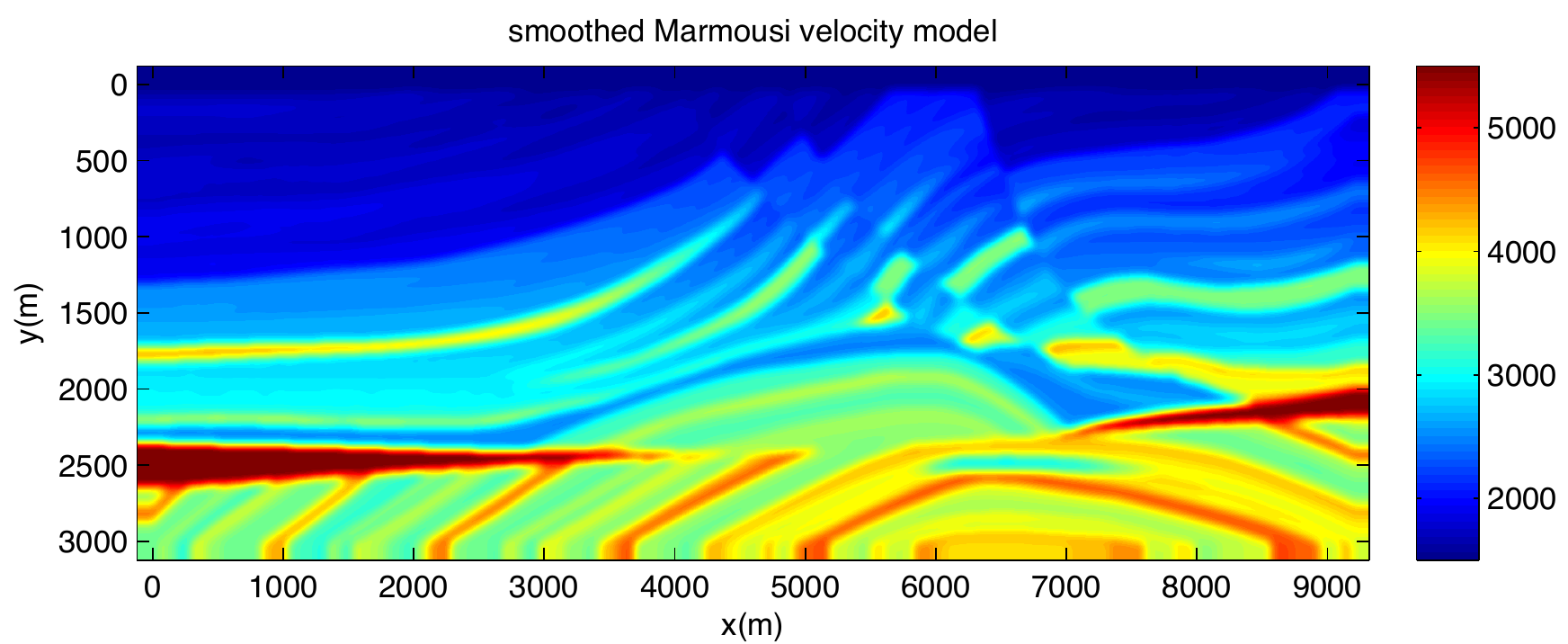}\\
\end{center}
\caption{Smoothed Marmousi velocity model}
\label{fig:marmousmooth_vel}
\end{figure}
\begin{figure}
\begin{center}
\includegraphics[width=16cm]{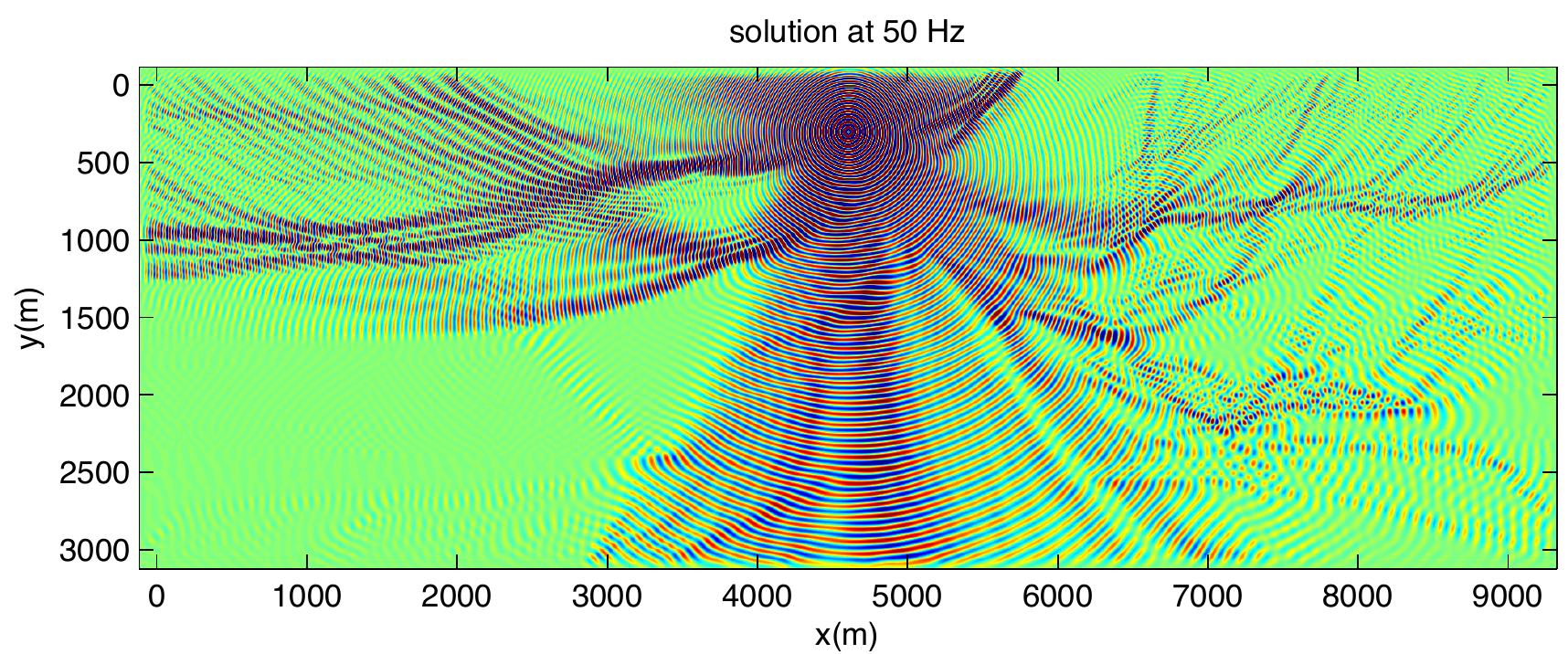}\\
\end{center}
\caption{Solution from a point source at 50 Hz}
\label{fig:marmousmooth_sol}
\end{figure}

\begin{figure}
\hspace*{-10mm}
\begin{minipage}{170mm}
\begin{center}
(a) \hspace*{78mm} (b)
\\
\includegraphics[width=80mm]{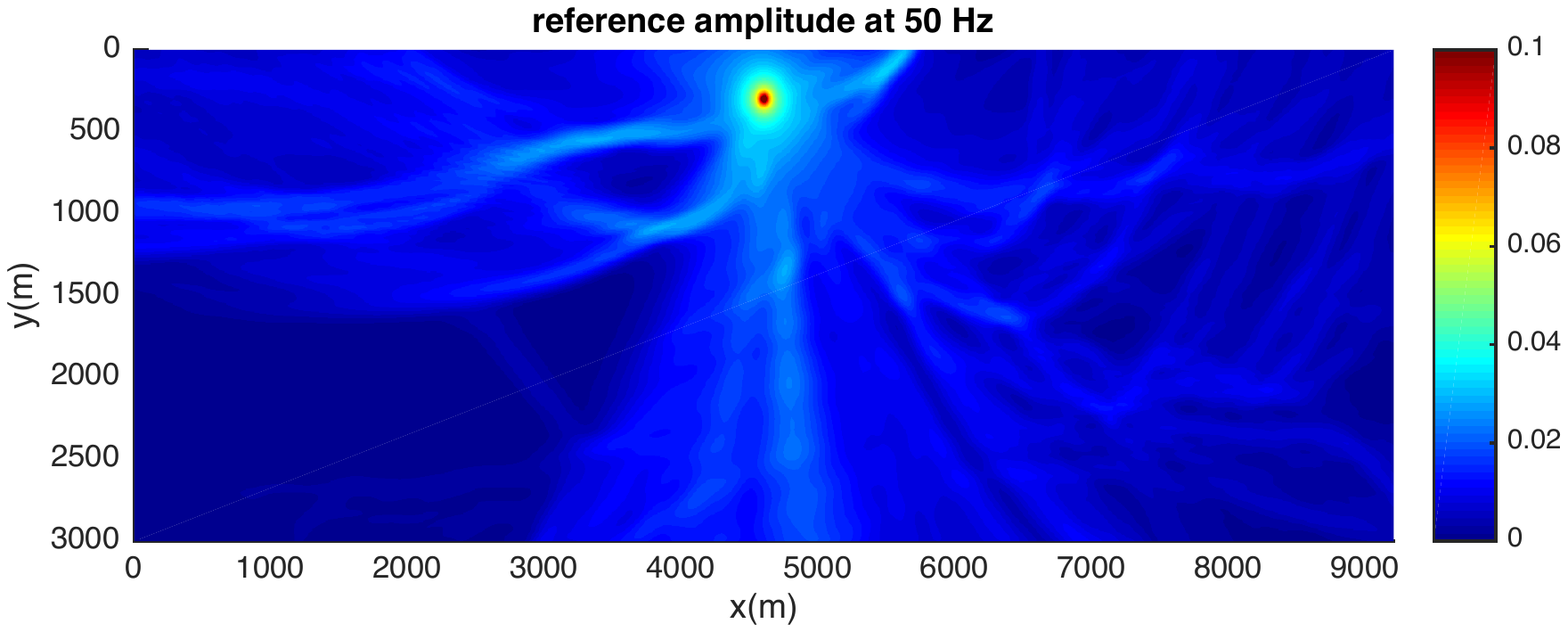}%
\includegraphics[width=80mm]{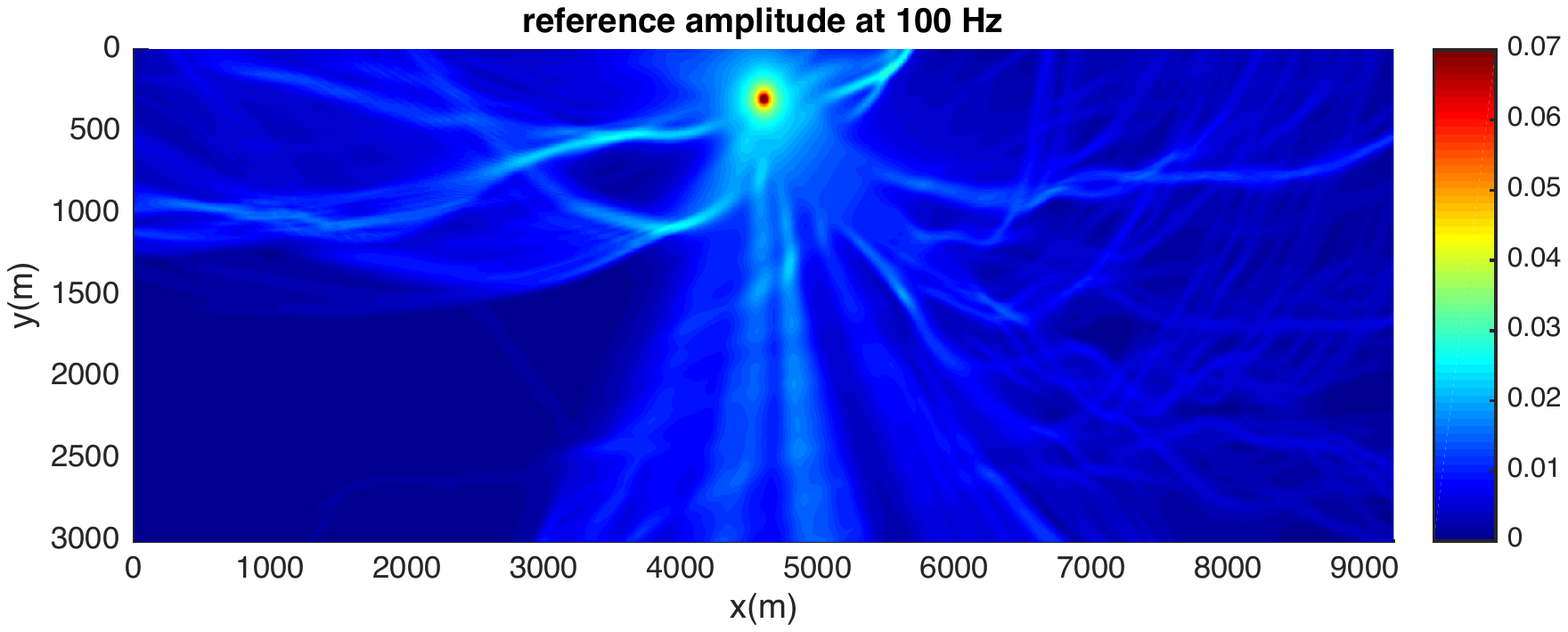}
\\
(c) \hspace*{78mm} (d)
\\
\includegraphics[width=8cm]{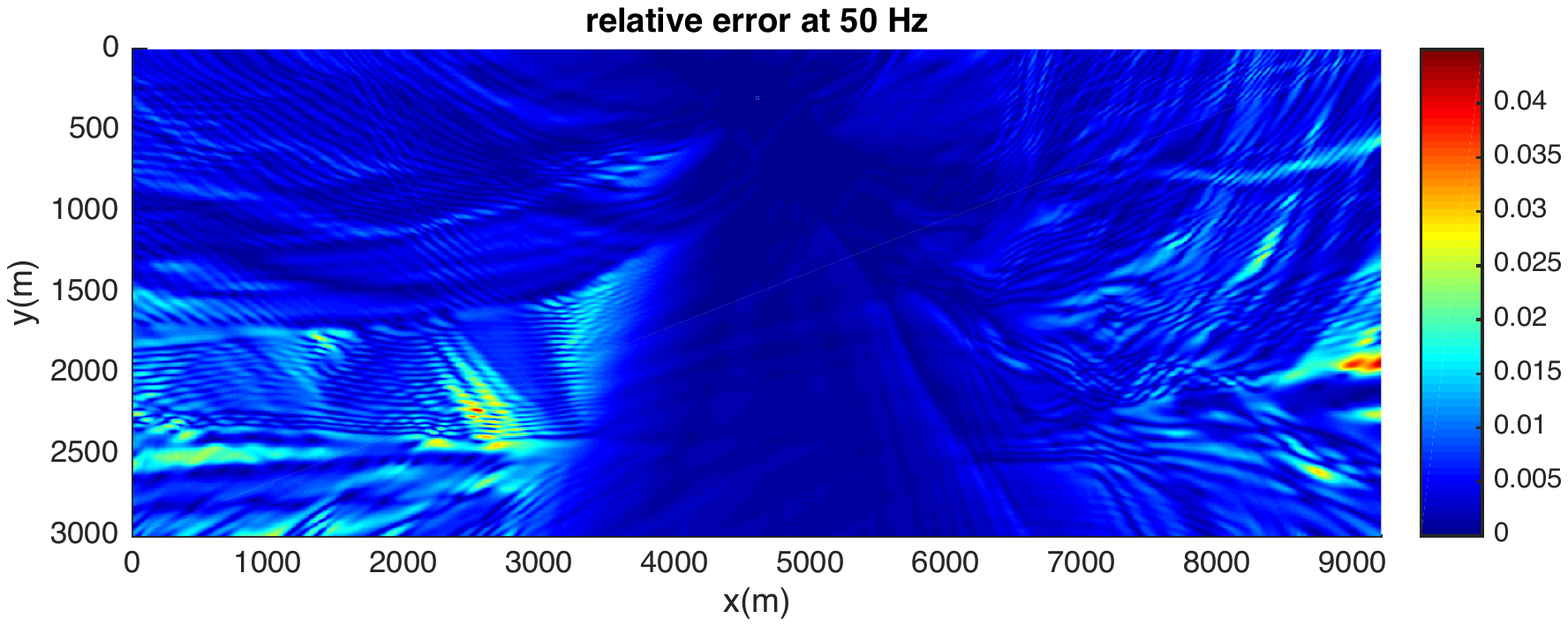}%
\includegraphics[width=8cm]{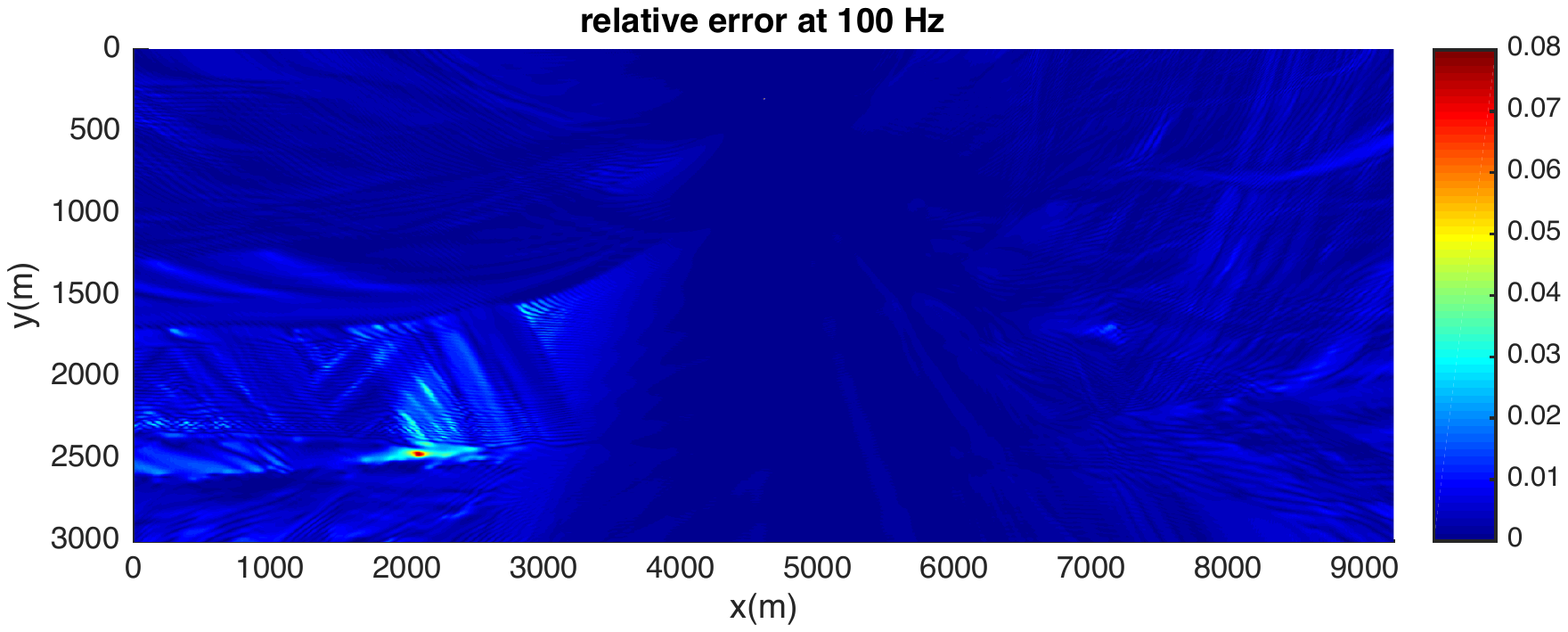}
\end{center}
\end{minipage}
\caption{Reference values for 50 Hz (a) and for 100 Hz (b). Relative
  errors in the solutions for 50 Hz (c) and for 100 Hz (d).}
\label{fig:marmousmooth_compare}
\end{figure}

\section{Application in multigrid based solvers}
\label{sec:twogrid}

The last few years there have been several interesting developments in
multigrid methods for Helmholtz equations. Different two-grid methods with
inexact coarse level solvers have been studied in
\cite{CalandraEtAl2013} and \cite{Stolk2014Preprint}. In
\cite{CalandraEtAl2013} a number of iterations of shifted Laplacian
preconditioned Krylov solver \cite{ErlanggaOosterleeVuik2006} is used
as coarse level solver. The method of \cite{Stolk2014Preprint} is
based on the multigrid method in \cite{StolkEtAl2014} with a double
sweep domain decomposition preconditioner \cite{Stolk2013} as coarse
level solver.  The multigrid method with exact coarse level solver was
studied in \cite{StolkEtAl2014}.  There it was shown that the
convergence can be strongly improved when phase slowness differences
between the fine and coarse scale operators are minimized. For this
purpose, optimized finite differences were used at the coarse level,
and good convergence was obtained for meshes with downto three points
per wavelength at the coarse level. For standard choices of the coarse
level discretization it was found that about 10 points per wavelength 
at the coarse level were needed to have good convergence.

In \cite{StolkEtAl2014} standard second order finite differences were
used as the fine level. Because of the relatively large phase slowness errors
of this method, the coarse level optimized finite difference method
had to be constructed specifically to match the phase slownesss of
second order finite differences, instead of matching the true
phase slowness. A better choice is to use method with small
phase slowness errors at the fine level and at the coarse levels. Here we
will use IOFD at all levels.
These experiments do not involve the operator $Q$. The operator $P$ is
used directly as coarse level discretization and at the fine level we
are only interested in solving equation
(\ref{eq:discrete_system_abstract1}).

In the first set of computational results of this section we will show
that this results in good convergence of the multigrid method with
exact coarse level solver. In a second example we will study the
multigrid method with inexact coarse level solver of
\cite{Stolk2014Preprint}. 

In our study of the convergence when using the exact coarse level
solver we are again interested in examples with wave propagation over
hundreds of wavelengths. Therefore, these experiments are done in two
dimensions. 
For background on multigrid methods, see
\cite{TrottenbergOosterleeSchueller2001}. 
As in \cite{StolkEtAl2014} most of the components of the multigrid
method are standard. Full weighting restriction and prolongation
operators are used. As smoother, an $\omega$-Jacobi method is used. We
found that  $\omega = 0.7$ and $\nu = 4$ (the number of pre- and
postsmoothing parameters) are good choices of parameters. 
In these experiments we used a conventional absorbing boundary layer
to simulate an unbounded domain. 

We studied the convergence as a function of the number of points per
wavelength for three velocity models: A constant
model, the Marmousi model and a slice of the 3-D SEG-EAGE salt
model. The latter two models are displayed in
Figure~\ref{fig:twogrid_models}.  The parameters of the examples and
the observed number of iterations to reduce the residual by $10^{6}$
are given in \ref{tab:two-grid}.  At downto three points per
wavelength the method behaved well. At 2.5 ppw coarse level the method
still converged, but the number of iterations increased substantially,
and also became more sensitive to the problem size (which was apparent
from smaller scale experiments not included in the table).

Note that the application in multigrid methods is quite different from
the application as fine level discretization.  The method is used at
coarser meshes (at three points per wavelength the direct application
will in general lead to too large errors). Also, multigrid
solvers using IOFD at coarse levels may be developed for other types
of fine level discretizations, as long as they use a regular mesh.

\begin{figure}
\begin{center}
(a)\\
\includegraphics[width=72mm]{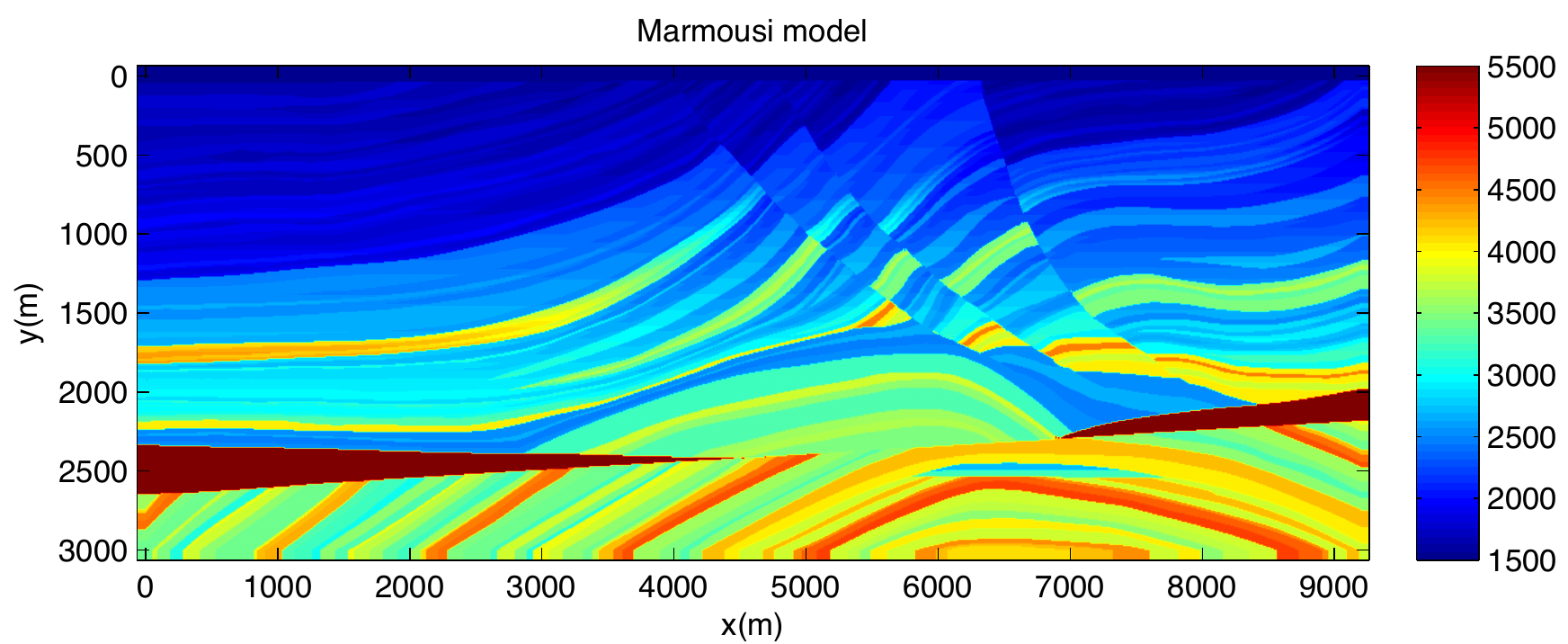}\\
(b)\\
\includegraphics[width=72mm]{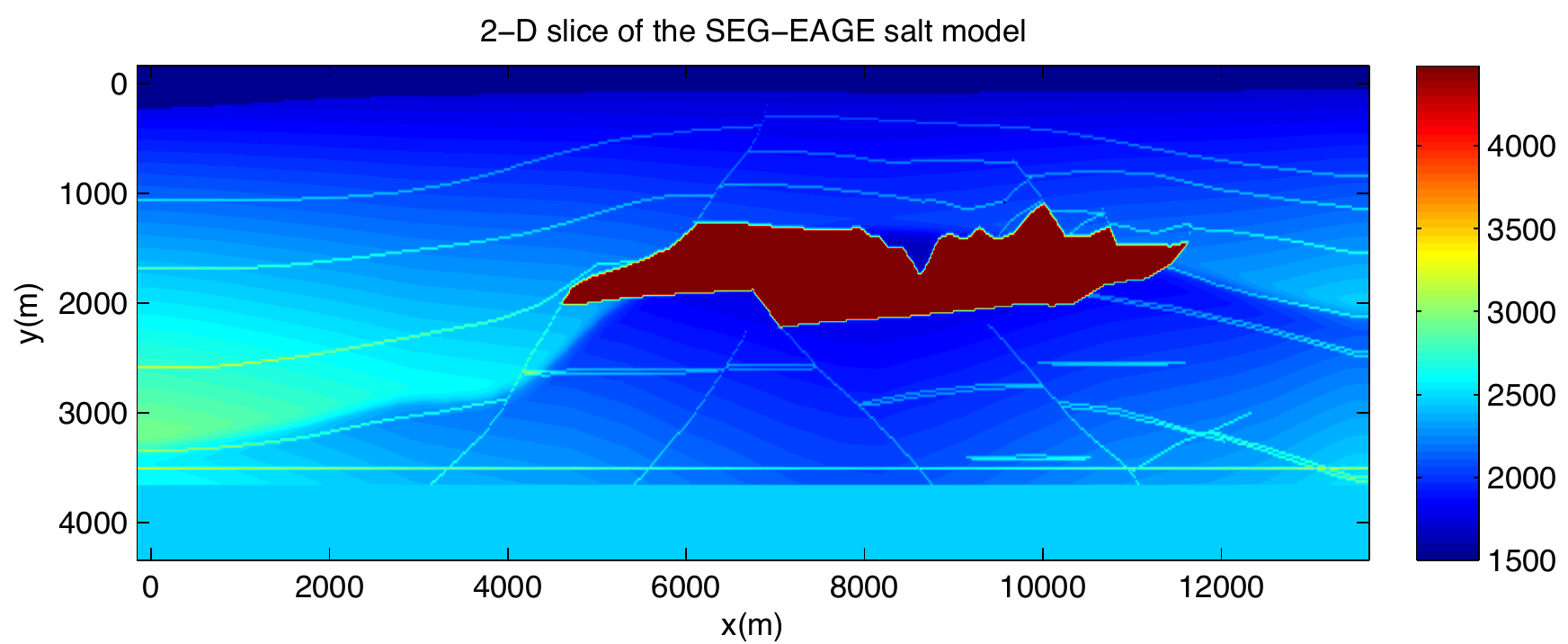}
\end{center}
\caption{Velocity models for the 2-D two-grid experiments. (a)
  Marmousi (b) 2-D slice of the SEG-EAGE salt model.}
\label{fig:twogrid_models}
\end{figure}

\begin{table}
\begin{center}
\begin{tabular}{c|cc|cc|cc} \hline
  & \multicolumn{2}{c|}{constant} 
  & \multicolumn{2}{c|}{Marmousi}
  & \multicolumn{2}{c}{salt model} \\ 
  & \multicolumn{2}{c|}{$2400 \times 2400$}
  & \multicolumn{2}{c|}{$4600 \times 750$}
  & \multicolumn{2}{c}{$2700 \times 836$} \\ \hline
ppw & freq & its  & freq & its & freq & its \\ \hline
5  & 480   & 29 & 150   & 23 & 60   & 18\\
6  & 400   & 8  & 125   & 11 & 50   & 8 \\
7  & 342.9 & 6  & 107.1 &  9 & 42.9 & 7 \\
8  & 300   & 5  & 93.8  &  8 & 37.5 & 6 \\
9  & 266.7 & 5  & 83.3  &  7 & 33.3 & 6 \\
10 & 240   & 4  & 75    &  6 & 30   & 5 \\ \hline
\end{tabular}
\end{center}
\caption{Iterations required for a two-grid method using IOFD
  discretization at the fine and coarse level as a function of the
  number of points per wavelength (ppw)}
\label{tab:two-grid}
\end{table}

We now turn to a multigrid method with an inexact coarse level solver.
Such methods are used because in three dimensions it is often too
expensive to compute the exact solution. These methods are currently
some of the fastest solvers for large problems that are in the
literature \cite{CalandraEtAl2013,Stolk2014Preprint}.

Because we are interested in coarse meshes, such as six points per
wavelength based on the previous examples, it is {\em a priori} not
clear that the above mentioned solvers perform well. Like many solvers
in the literature, they were tested for problems with at least ten
mesh points per wavelength. They cannot be assumed to converge as well
for larger frequencies, because multigrid convergence depends on
frequency, and the same is true for the shifted Laplacian
preconditioner \cite{CoolsVanroose2013}. For the double sweep domain
decomposition it is unclear how the frequency affects the convergence,
but a priori it also cannot be assumed to be independent of the
frequency.

This raises the question whether we can actually obtain a gain in
efficiency by going to coarser meshes. The purpose of the next example
is to show that this indeed the case, and to generally show that IOFD
can perform well with the solver of \cite{Stolk2014Preprint}.

In the following example we will test the method of
\cite{Stolk2014Preprint}, which is a two-grid method using an inexact
coarse level solver given by a double sweep domain decomposition
preconditioner (see \cite{Stolk2013}). The method is modified to use
IOFD at both the fine and the coarse levels of the two-grid method.
We will take the SEG-EAGE Salt Model as an example, similarly as in 
\cite{Stolk2014Preprint}. In addition to changing the discretization
method we will increase the frequency by a factor $\frac{5}{3}$,
so that a minimum of six points per wavelength is used, a regime which
has not been tested before for this method. If convergence and cost 
per degree of freedom would stay constant, there would be an
improvement in the cost by a factor of over $\big( \frac{5}{3} \big)^3
\approx 4.62$ (more than this because cost grows somewhat faster than
linear with problem size).

The original SEG-EAGE salt model is of size 13500 x 13500 x 4200
meter, discretized with 20 m grid spacing. We apply the method just
described to solve the Helmholtz equation with this velocity model and
random or point sources as right hand sides at four different
frequencies from $6.25$ to $12.5$ Hz.  Slices of the model are
displayed in Figure~\ref{fig:saltmodel_model}.  Parameters in the
two-grid method are $\nu = 3$ for the number of pre- and postsmoothing
steps and $\omega = 0.65$ in the $\omega$-Jacobi method.
Computations were done on the Lisa cluster at Surfsara, described
already in section~\ref{sec:simulations},
using the implementation described in~\cite{Stolk2014Preprint}. A maximum of 16 nodes were used in parallel for
these computations. 

The algorithm is set up to solve for multiple right hand sides
simultaneously. In the table of results, the computation time per
right hand side is given. In Table~\ref{tab:SaltModel} some parameters
are given, together with the computation time and iteration count to
reduce the residual by $10^{-6}$. 
As illustration, plots of a solution are given in
Figure~\ref{fig:saltmodel_solution}.
It can be observed that the cost increases very little compared to
the results of \cite{Stolk2014Preprint}, even though frequencies are increased
by a factor $5/3$. Some increase in cost can be expected, because the
discrete Helmholtz operator using second order finite differences is 
cheaper to apply than the one using a compact 27-point stencil.
Hence reducing the number of points per wavelength in the mesh can
indeed lead to corresponding savings in computation time.

As mentioned, the methods of \cite{CalandraEtAl2013} and
\cite{Stolk2014Preprint} are some of the fastest currently in the
literature. Comparing with these results we see a significant
improvement. For example in \cite{CalandraEtAl2013} the SEG-EAGE salt
model problem was solved at 10 Hz in 270 seconds on 256 cores of an IBM
BG/P machine (with the residual reduced by a factor $10^{5}$ instead
of $10^6$ in our case). Here we solve the problem at 9.91 Hz using 128
cores in 45 seconds per right hand sides (179 seconds for four right
hand sides), a clear improvement\footnote{%
On the other hand the method of \cite{CalandraEtAl2013} uses less 
memory and has been applied to larger examples than we have shown 
here. Furthermore no full comparison including accuracy was made.}.

\begin{figure}
\begin{center}
(a) \hspace*{6cm} (b)\\
\includegraphics[width=7cm]{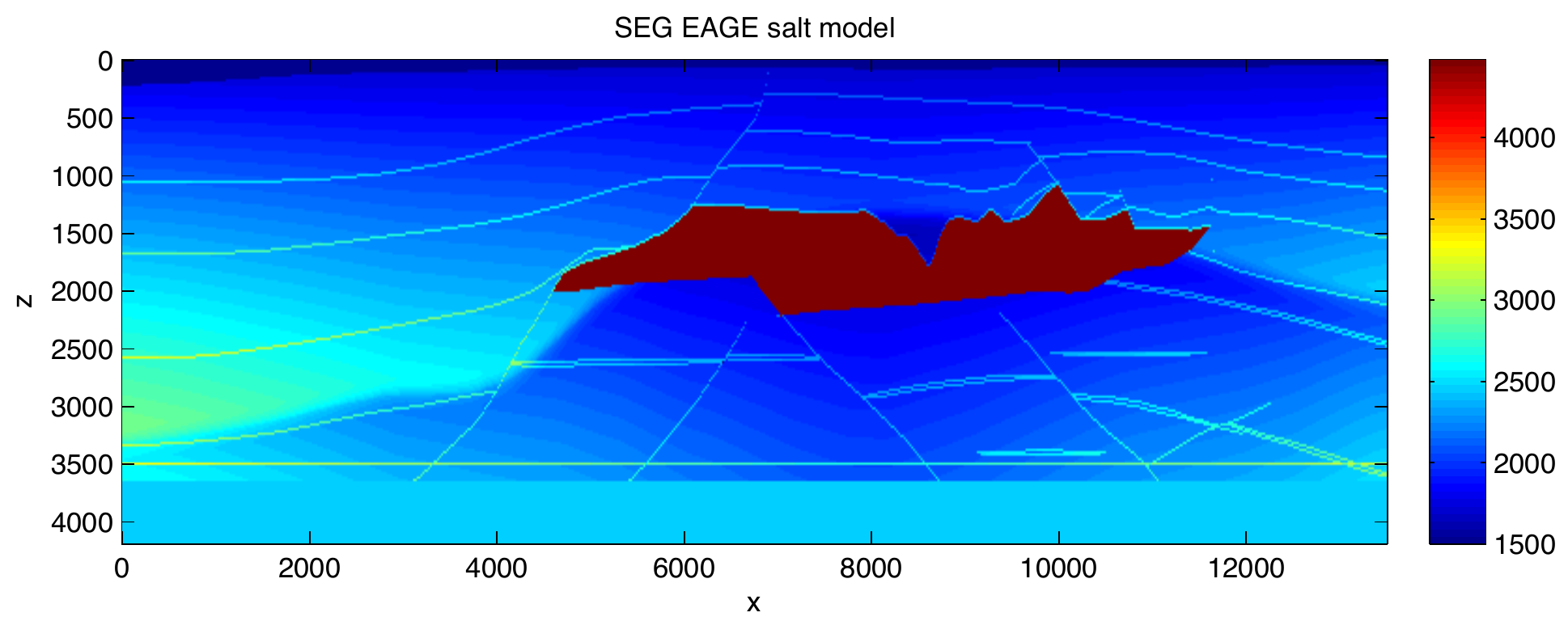}
\includegraphics[width=7cm]{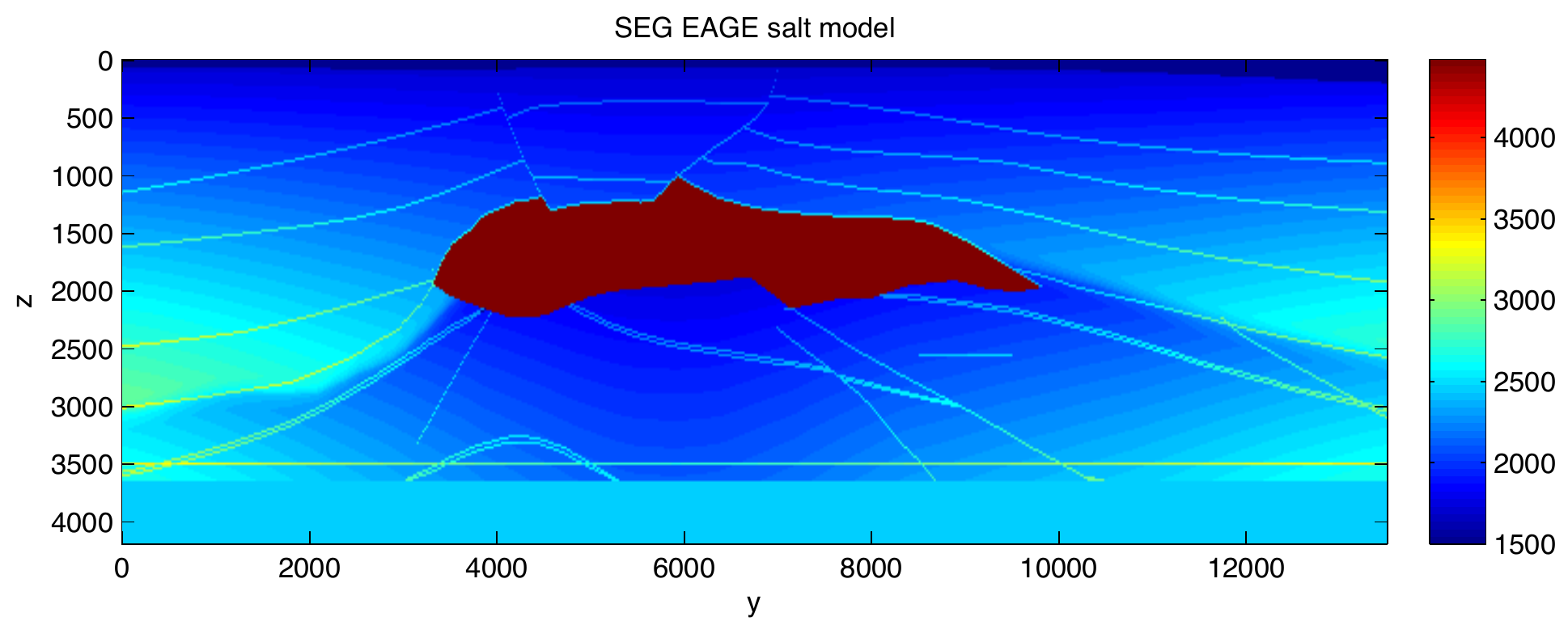}
\end{center}
\caption{SEG-EAGE salt velocity model: (a) $(x,z)$ slice at $y=6740$ m 
(b) $(y,z)$ slice at $x=6740$ m.}
\label{fig:saltmodel_model}
\end{figure}
\begin{figure}
\begin{center}
(a) \hspace*{7cm} (b)\\
\hspace*{-1cm}%
\includegraphics[width=7cm]{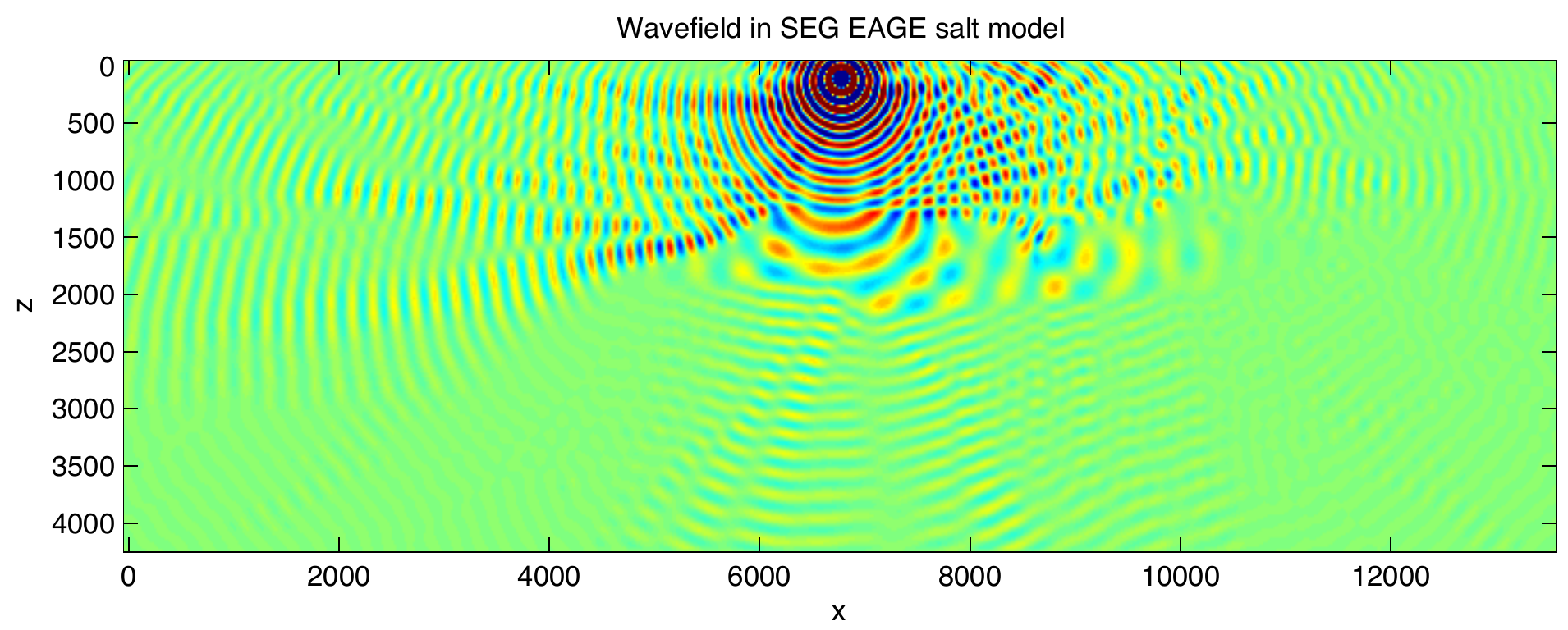}%
\includegraphics[width=7cm]{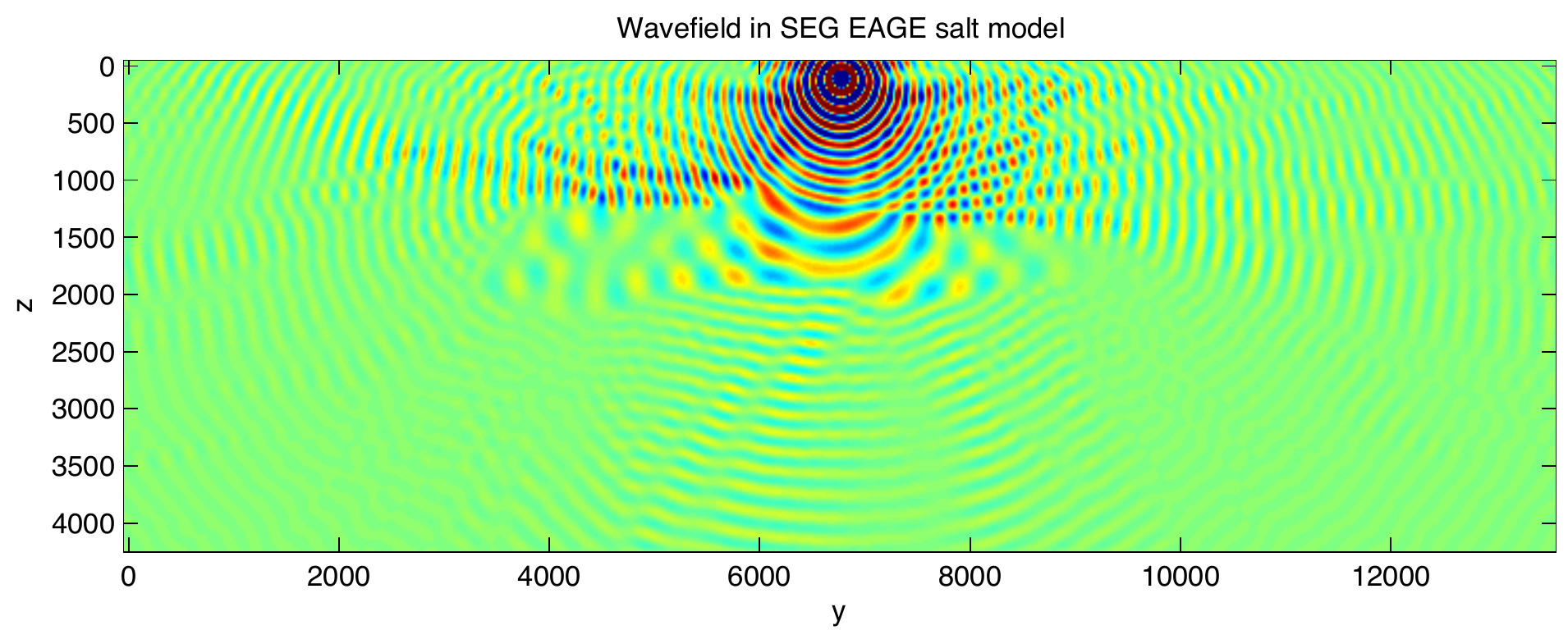}%
\hspace*{-1cm}
\end{center}
\caption{Solution to the Helmholtz equation at $12.5$ Hz: 
(a) $(x,z)$ slice at $y=6740$ m
(b) $(y,z)$ slice at $x=6740$ m.} 
\label{fig:saltmodel_solution}
\end{figure}
\begin{table}
\begin{center}
\begin{tabular}{|l|c|c|c|c|} \hline
frequency 
  & 6.25
  & 7.87
  & 9.91
  & 12.5 
\\
size
  & 338x338x106
  & 426x426x132
  & 536x536x166
  & 676x676x210
\\
\# dof
  & $1.3 \cdot 10^7$
  & $2.5 \cdot 10^7$
  & $5.0 \cdot 10^7$
  & $1.0 \cdot 10^8$
\\
cores
  & 32
  & 64
  & 128
  & 256
\\
\# of rhs.\
  & 1
  & 2 
  & 4
  & 8
\\
iterations
  & 12
  & 12
  & 13
  & 15
\\
computation time/rhs(s)
  & 26
  & 35
  & 45
  & 73
\\ \hline
\end{tabular}
\end{center}
\caption{Computation times and iteration counts for the SEG-EAGE Salt
  Model example.}
\label{tab:SaltModel}
\end{table}

\section{Discussion}
\label{sec:discussion}

Here we summarize some of the conclusions and further discuss the
results.

Using the results presented one can make a case for the use of coarse
meshes using a minimum of five or six points per wavelength in time
harmonic wave simulations in case $k$ is smooth. This idea is not new,
in the exploration geophysics community it appears to be quite
common. However, we found that the methods that have been proposed for
this purpose in \cite{JoShinSuh1996} and \cite{OpertoEtAl2007} can be
expected to give substantial phase errors in simulations of large
distance wave propagation.  By using the new IOFD method (in two or
three dimensions) or the QS-FEM method (in two dimensions only), phase
errors can be made much smaller.

In $k(x)$ has strong gradients, one can expect that, at least locally
where $\nabla k$ is large, finer meshes and/or different
discretizations are needed to obtain accurate solutions. Large
gradients lead to reflections. Typically finer meshes are needed to
model these accurately. For one reason this is because finer
discretizations of $k$ are needed, since linearized scattering theory
shows that reflected waves are associated with perturbations in the
medium velocity with wave vectors of length up to $2k$ (where here $k$
refers to the background velocity around which the linearization is
applied). However, in this case the multigrid approach discussed in
section~\ref{sec:twogrid} can still be useful. It has been applied in
successfully in examples with discontinuities. This suggests to do
further research on multigrid approaches with compact finite
difference method at the coarse level and other discretizations at the
fine level, including methods with local refinement.  A similar
argument can be held for the discretization of the right hand side $f$
in the equation $(-\Delta -k(x)^2)u = f$. For rapidly varying
functions $f$, finer meshes may be needed at least locally where the
rapid variations occur.

When applied in inversion algorithms IOFD and QS-FEM are somewhat more
complicated than the methods of \cite{JoShinSuh1996} and
\cite{OpertoEtAl2007}, because the operatore depends in a more
complicated fashion on the coefficients, which means it is more
complicated to compute the derivative of the finite difference
operator with respect to the medium coefficients. Due to the use of
Hermitian interpolation, these derivatives are however continuous for
our IOFD method.


\bibliographystyle{abbrv}
\bibliography{helmiofd,revisionsbib}

\appendix

\section{Phase slowness computations for finite element methods}

In a periodically repeating setting, which is the case for finite
elements with $N \ge 2$, the situation is somewhat more
complicated. The symmetry property (\ref{eq:translation_symmetry})
only holds for $p,q,r$ divisible by $N$.
For such operators we consider the Bloch waves
\begin{equation} \label{eq:Bloch_wave}
  u_{l,m,n} = e^{i \xi \cdot x_{l,m,n}} v_{l,m,n}
\end{equation}
where $v_{l,m,n}$ is periodic with shifts $(pN,qN,rN)$, $p,q,r$
integers.  For given $\xi$, the action of an operator $A$ with these
symmetries is given by an $N^3 \times N^3$ matrix acting on the
$v_{l,m,n}$ (in three dimensions) for $0\le l,m,n \le N-1$. 
We need to find the vectors $\xi$
for which there is a zero eigenvalue.  However not all zero
eigenvectors correspond to plane waves with wave vectors $\xi$, 
because of the presence of $v_{l,m,n}$. In general $v_{l,m,n}$ can 
correspond to a linear combination of plane waves with wave vectors given by 
$(p \frac{2\pi}{Nh} , q \frac{2\pi}{Nh}, r \frac{2\pi}{Nh})$, where
$p,q,r$ are integers.  Assuming that some $\xi$ corresponds to a
simple zero eigenvalue and that $u_{l,m,n}$ is close to a plane wave
(which is often the case because the eigenfunctions of the continuous
operator are plane waves and the operator $A$ is a good approximation
of the continuous operator), these integers $p,q,r$ can be determined
modulo $N$, and the wave vector associated with an element of the zero 
set of $A$ can be determined.

In the computations we will take a somewhat different approach. We
will compute all eigenvalues, and then only consider the one whose
eigenvector $v_{l,m,n}$ is most closely correlated with (has the in
absolute value largest inner product with) the constant function
$\tilde{v}_{l,m,n} = 1$.  We will say we have found a phase velocity
vector at some $\xi$ if this eigenvalue is zero. This approach has
some limitations, but a more extensive study of this topic falls outside the
scope of this paper. For standard finite elements and $k,h$ such that
the mesh has more than four points per wavelength this appeared to be
sufficient.

\end{document}